\definecolor{darkblue}{rgb}{0.0,0,0.7}
\newcommand{\darkblue}{\color{darkblue}}
\definecolor{darkred}{rgb}{0.68,0,0}
\newcommand{\darkred}{\color{darkred}}
\definecolor{darkgreen}{rgb}{0,.38,0}
\newcommand{\darkgreen}{\color{darkgreen}}
\newcommand{\defn}[1]{\emph{\darkblue #1}}
\newcommand{\defna}[1]{\emph{\darkred #1}}
\newcommand{\defng}[1]{\emph{\darkgreen #1}}
\setlist[enumerate]{
	label=\textnormal{({\roman*})},
	ref={\roman*}}
\def\th@plain{%
	\thm@notefont{}% same as heading font
	\itshape % body font
}
\def\th@definition{%
	\thm@notefont{}% same as heading font
	\normalfont % body font
}
\newtheorem{thm}{Theorem}[section]
\newtheorem*{claim*}{Claim}
\newtheorem{cor}[thm]{Corollary}
\newtheorem{Def}[thm]{Definition}
\newtheorem{prop}[thm]{Proposition}
\newtheorem{conj}[thm]{Conjecture}
\newtheorem{question}[thm]{Question}
\theoremstyle{definition}
\newtheorem{ex}[thm]{Example}
\newtheorem{rem}[thm]{Remark}
\numberwithin{figure}{section}
\numberwithin{equation}{section}
\def\emp{\nothing}
\def\bd{\blacktriangledown}
\def\zz{\mathbb Z}
\def\nn{\mathbb N}
\def\rr{\mathbb R}
\def\qqq{\mathbb Q}
\def\ov{\overline}
\def\sm{\smallsetminus}
\def\Ga{\Gamma}
\def\la{\lambda}
\def\ga{\gamma}
\def\si{\sigma}
\def\de{\delta}
\def\al{\alpha}
\def\be{\beta}
\def\om{\omega}
\def\cD{\mathcal D}
\def\cM{\mathcal M}
\def\cT{\mathcal T}
\def\cX{\mathcal X}
\def\cY{\mathcal Y}
\def\CT{\text{CT}}
\def\CTr{\text{\em CT}}
\def\ssu{\subset}
\def\<{\langle}
\def\>{\rangle}
\def\rH{ {\text {\rm H} } }
\def\0{{\mathbf 0}}
\def\st{{\rm st}}
\def\ma{{\rm ma}}
\def\nothing{\varnothing}
\def\.{\hskip.06cm}
\def\ts{\hskip.03cm}
\def\ba{\textbf{\textbf{a}}}
\def\bb{\textbf{\textbf{b}}}
\def\bd{\textbf{\textbf{d}}}
\def\rba{\textbf{\em \textbf{a}}}
\def\rbb{\textbf{\em \textbf{b}}}
\def\PerC{C$_{\text{\sc PER}}$}
\def\PerV{V$_{\text{\sc PER}}$}
\newcommand{\red}{\mathrm{red}}
\newcommand{\pc}{\mathrm{PC}}
\newcommand{\PM}{\mathrm{PM}}
\newcommand{\per}{\mathrm{per}}
\newcommand{\SYT}{\operatorname{{\rm SYT}}}
\def\.{\hskip.06cm}
\def\ts{\hskip.03cm}
\def\nin{\noindent}
\newcommand{\textsu}[1]{\textup{\textsf{#1}}}
\newcommand{\ComCla}[1]{\textup{\textsu{#1}}}
\newcommand{\sharpP}{\ComCla{\#P}}
\newcommand{\SP}{\ComCla{\#P}}
\newcommand{\GapP}{\ComCla{GapP}}
\newcommand{\Sigmap}{\ensuremath{\Sigma^{{\textup{p}}}}}
\newcommand{\NP}{\ComCla{NP}}
\newcommand{\BPP}{\ComCla{BPP}}
\newcommand{\coNP}{\ComCla{coNP}}
\renewcommand{\P}{\ComCla{P}}
\newcommand{\CeqP}{\ComCla{C$_=$P}}
\newcommand{\PH}{\ComCla{PH}}
\newcommand{\PSPACE}{\ComCla{PSPACE}}
\newcommand{\FP}{\ComCla{FP}}
\newcommand{\PP}{\ComCla{PP}}
\def\SP{\sharpP}
\def\CEP{{\CeqP}}
\newcommand{\inv}{\operatorname{{\rm inv}}}
\title[Computational complexity of counting coincidences]
{Computational complexity of counting coincidences}
\date{\today}
\author{Swee Hong Chan}
\address[Swee Hong Chan]{Department of Mathematics, Rutgers University,  Piscatway, NJ 08854.}
\email{\texttt{sc2518@rutgers.edu}}
\author[\ts Igor Pak]{Igor Pak}
\address[Igor Pak]{Department of Mathematics, UCLA,  Los Angeles, CA 90095.}
\email{\texttt{pak@math.ucla.edu}}
\keywords{Domino tiling, permanent, $\#$P-completeness, graph matching, linear extensions of posets, order ideals of posets, matroid bases, Kronecker coefficient, standard Young tableau}
\begin{document}

\begin{abstract}
Can you decide if there is a coincidence in the numbers counting
two different combinatorial objects?  For example, can you decide
if two regions in $\rr^3$ have the same number of domino tilings?
There are two versions of the problem, with \ts $2\times 1 \times 1$
\ts and \ts $2\times 2 \times 1$ \ts boxes.  We prove that in both
cases the coincidence problem is not in the polynomial hierarchy
unless the polynomial hierarchy collapses to a finite level.
While the conclusions are the same, the proofs are notably
different and generalize in different directions.

We proceed to explore the coincidence problem for counting independent
sets and matchings in graphs, matroid bases, order ideals and linear
extensions in posets, permutation patterns, and the Kronecker coefficients.
We also make a number of conjectures for counting other combinatorial objects
such as plane triangulations, contingency tables, standard Young tableaux,
reduced factorizations and the Littlewood--Richardson coefficients.
\end{abstract}
	
\maketitle
	
\vskip.2cm

%\newpage
%Main -- math.CO, secondary -- cs.DM, cs.CG, cs.CC
%Primary:  05A20, 68R05, Secondary:  05A16, 05B45, 05C30, 05E10, 06A07, 52C20, 68Q15, 68Q17

\section{Introduction}\label{s:intro}

\subsection{Tilings} \label{ss:intro-tilings}
In this paper we consider coincidences of combinatorial counting functions.
Consider two bounded regions in the plane.  Do they have
the same number of domino tilings?  Here we are assuming that the
regions are finite subsets of unit squares on a square grid, we write
\ts $\Ga \ssu \zz^2$, and the dominos are the usual \ts $2\times 1$ \ts
rectangles. For example, a \ts $2\times 3$ \ts rectangle has three
domino tilings, and both regions on the right have four
domino tilings:

\begin{figure}[hbt]
\begin{center}
	\includegraphics[height=1.5cm]{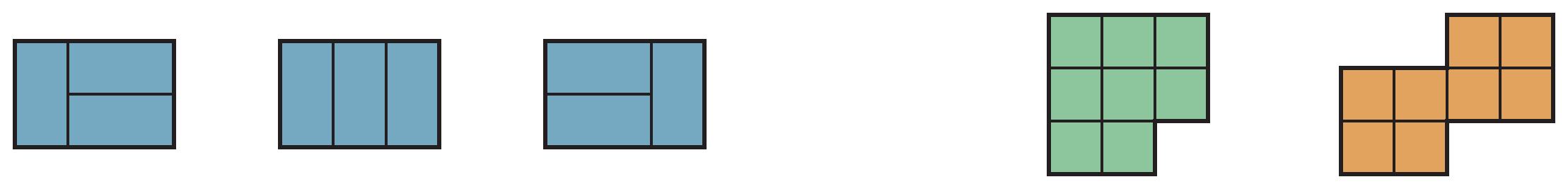}
\end{center}
\label{f:basic}
\end{figure}

%\vskip-.3cm

Algorithmically, the coincidence problem is easy, since we can simply
compute the number of domino tilings of each region in polynomial time,
and then compare the numbers.  Indeed, the \emph{Kasteleyn formula} \ts
gives the number \ts $\tau(\Ga)$ \ts of domino tilings as an \ts $n\times n$
determinant which can be computed in time polynomial in~$n$, where \ts
$n=|\Ga|$ \ts is the area of region~$\Ga$, see e.g.\ \cite{Ken,LP}.

Now consider what happens to domino tilings in $\rr^3$.  Should we
expect that the coincidence problem remains easy?  It turns out,
this is a much harder problem.  In fact, there
are two versions of $3$-dimensional dominoes: a \ts $2\times 2 \times 1$ \ts box
which we call a \defn{slab}, and a \ts $2\times 1 \times 1$ \ts
box which we call a \defn{brick}.  For example, there are three tilings of a
\ts $2\times 2 \times 2$ \ts box with a slab and nine with a brick,
shown below up to rotations:

%\vskip-.5cm

\begin{figure}[hbt]
\begin{center}
	\includegraphics[height=1.4cm]{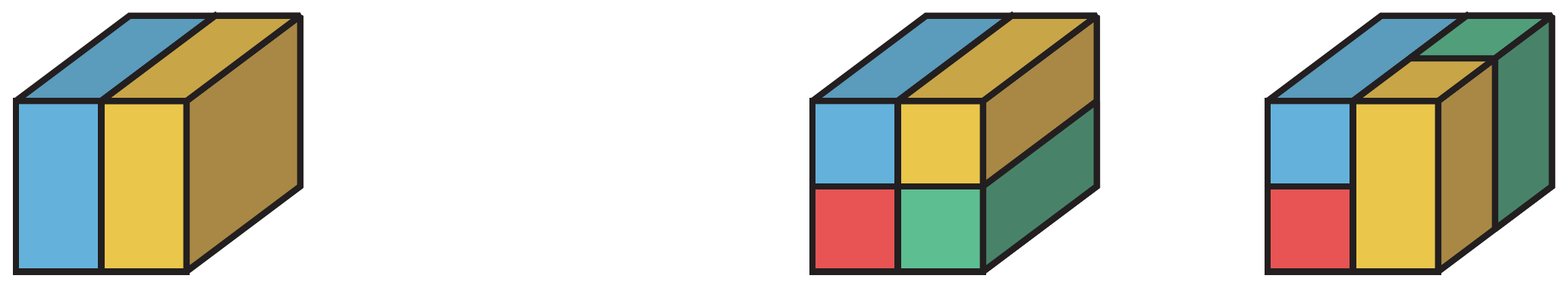}
\end{center}
\label{f:2box}
\end{figure}

For both versions, there is no analogue of the Kasteleyn formula for
the number of tilings.  Indeed, for tilings with slabs or
with bricks, Jed Yang and the second author proved that computing the
number of tilings is $\SP$-complete \cite{PY}.  But this is where
the similarities end: the coincidence problems have very different
nature in these two cases.  The reason for the difference is the type
of reduction used in the proofs (see Section~\ref{s:perm} and~$\S$\ref{ss:finrem-two-types}).
\smallskip

Let \ts $\tau_s(\Ga)$ \ts be the number of tilings of a region \ts
$\Ga \ssu \zz^3$ \ts with slabs.  Denote by \ts {\sc C$_{\text{ST}}$} \ts the
\defn{slab tiling coincidence problem}: \.
\begin{equation}\label{eq:TST}
\text{\sc C$_{\text{ST}}$} \ := \ \big\{\. \tau_s(\Ga) \ts =^? \ts \tau_s(\Ga'),
\ \ \text{where} \ \ \Ga,\Ga'\ssu \zz^3\big\}.
\end{equation}

\smallskip

\begin{thm}\label{t:slab-main}
Problem \ts {\sc C$_{\text{ST}}$} \ts is \ts $\coNP$-hard.
Furthermore, \ts {\sc C$_{\text{ST}}$} \ts is
not in the polynomial hierarchy unless the polynomial hierarchy
collapses to a finite level:  \.  \ts {\sc C$_{\text{ST}}$} \ts
$\in \PH \ \Rightarrow \ \PH=\Sigmap_m$ \. for some $m$.
\end{thm}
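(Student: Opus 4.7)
I would aim directly for the stronger claim that $\text{\sc C}_{\text{ST}}$ is $\CeqP$-hard. This subsumes $\coNP$-hardness, since $\coNP \subseteq \CeqP$ trivially ($\phi$ is unsatisfiable iff $\#(\phi) = 0$), and the conditional $\PH$-collapse is the standard corollary of Toda's theorem $\PH \subseteq \rP^{\PP}$, combined with the binary-search embedding $\PP \subseteq \rP^{\CeqP}$, which together give that any $\CeqP$-hard problem lying inside $\PH$ would force $\PH = \Sigmap_m$ for some~$m$.

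The central ingredient is that the $\SP$-completeness reduction for $\tau_s$ in \cite{PY} is, on inspection, \emph{parsimonious}: for every $3$-CNF formula $\phi$ one constructs in polynomial time a region $\Gamma_\phi \subset \zz^3$ with
\[
\tau_s(\Gamma_\phi) \ = \ \#(\phi),
\]
where $\#(\phi)$ is the number of satisfying assignments. I would verify this by re-examining the variable, wire, and clause gadgets of \cite{PY} and checking that each satisfying assignment extends uniquely to a slab tiling of $\Gamma_\phi$, while inconsistent assignments leave some gadget locally untileable. The slightly weaker near-parsimonious statement $\tau_s(\Gamma_\phi) = c_\phi \cdot \#(\phi)$, with $c_\phi$ an easily computed factor depending only on the shape of $\phi$, would suffice just as well, since $c_\phi$ cancels in any coincidence comparison between $\Gamma_{\phi_1}$ and $\Gamma_{\phi_2}$. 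If different gadgets produce different auxiliary factors, the multiplicativity $\tau_s(\Ga_1 \sqcup \Ga_2) = \tau_s(\Ga_1) \. \tau_s(\Ga_2)$ over well-separated disjoint unions lets me pad both sides with fixed auxiliary regions to equalize the factors.

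With parsimony in hand, $\coNP$-hardness is immediate: reduce from \textsc{Unsat} by sending $\phi \mapsto (\Gamma_\phi, \Gamma_0)$, where $\Gamma_0$ is any fixed region with $\tau_s(\Gamma_0) = 0$ — a single unit cube suffices, since its volume is not a multiple of $4$ and so admits no slab tiling. Then $(\Gamma_\phi, \Gamma_0)$ is a yes-instance of $\text{\sc C}_{\text{ST}}$ iff $\phi$ has no satisfying assignment. For the full $\CeqP$-hardness, I would reduce from the canonical $\CeqP$-complete problem of deciding whether $\#(\phi_1) = \#(\phi_2)$ for a given pair of $3$-CNFs, via the map $(\phi_1, \phi_2) \mapsto (\Gamma_{\phi_1}, \Gamma_{\phi_2})$. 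The main obstacle in the whole argument is confirming the parsimony (or near-parsimony) of \cite{PY}'s slab reduction; once this is pinned down, both parts of the theorem drop out of the same construction, and the PH-collapse consequence is standard complexity-theoretic boilerplate.
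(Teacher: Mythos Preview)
Your plan is essentially the paper's: parsimonious reduction from {\sc \#3SAT} to $\tau_s$, hence {\sc C$_{\text{ST}}$} is $\CeqP$-complete, hence the $\PH$ collapse (the paper packages this as Proposition~\ref{p:pars}). Two corrections are worth noting. First, the reduction in \cite{PY} is from {\sc 1-in-3 SAT}, not {\sc 3SAT}; the paper composes it with Schaefer's parsimonious {\sc 3SAT} $\to$ {\sc 1-in-3 SAT} reduction \cite{Schaefer} to complete the chain, so when you re-examine the gadgets you will find clause gadgets encoding exactly-one-true rather than ordinary disjunction. Second, your ``binary-search embedding $\PP \subseteq \P^{\CeqP}$'' is not a known fact --- a $\CeqP$ oracle yields only equality tests, not comparisons, so binary search on the value of a $\SP$ function does not go through. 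The paper instead uses Tarui's $\PH \subseteq \NP^{\CeqP}$ \cite{Tar91,Gre93}: nondeterministically guess the value and verify with a single $\CeqP$ query. This is the step you would need to fix when you turn the boilerplate into an actual proof.
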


\smallskip

% Here we are assuming that the reader is familiar with
% basic computational complexity (see below).
In particular, the theorem implies that \ts {\sc C$_{\text{ST}}$} \ts does
not have a polynomial time algorithm (unless $\P=\NP$).  Nor does it have
a probabilistic polynomial time algorithm (unless $\PH$ collapses),
since \ts $\BPP \subseteq \PH$.  Nor does there exist a polynomial
size witness for the coincidence (ditto).

The first part of the theorem follows immediately from
$\NP$-completeness of the slab tileability \cite[Thm~1.1]{PY},
which is a special cases of the \ts {\sc C$_{\text{ST}}$}.
The second part follows from the parsimonious reduction
in the proof of $\SP$-completeness of the \ts {\sc C$_{\text{ST}}$},
combined with some known computational complexity (see $\S$\ref{ss:perm-slab}).

\smallskip

Let \ts $\tau_b(\Ga)$ \ts be the number of tilings of a region \ts
$\Ga \ssu \zz^3$ \ts with bricks.  We similarly denote by \ts {\sc C$_{\text{BT}}$} \ts the
\defn{brick tiling coincidence problem}: \.
\begin{equation}\label{eq:BST}
\text{\sc C$_{\text{BT}}$} \ := \ \big\{\. \tau_b(\Ga) \ts =^? \ts \tau_b(\Ga'),
\ \ \text{where} \ \ \Ga,\Ga'\ssu \zz^3\big\}.
\end{equation}

\smallskip

\begin{thm}\label{t:brick-main}
Problem \ts {\sc C$_{\text{BT}}$} \ts is not in the polynomial hierarchy unless the polynomial hierarchy
collapses to a finite level:  \.  \ts {\sc C$_{\text{BT}}$} \ts
$\in \PH \ \Rightarrow \ \PH=\Sigmap_m$ \. for some $m$.
\end{thm}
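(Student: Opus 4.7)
\smallskip

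\emph{Proof strategy.} The plan is to deduce the theorem from $\CeqP$-hardness of $\text{\sc C}_{\text{BT}}$ together with Toda's theorem. Unlike the slab case, brick tileability is polynomial-time decidable: brick tilings of $\Gamma \ssu \Zb^3$ biject with perfect matchings in the bipartite cell-adjacency graph of $\Gamma$, whose existence is testable via Hall's theorem. Consequently we cannot inherit $\coNP$-hardness from tileability as we did for slabs. Instead we extract hardness purely from the counting structure, following the second half of the proof of Theorem~\ref{t:slab-main}: a parsimonious reduction from a $\SP$-complete problem, combined with the fact that deciding equality of two $\SP$-functions is $\CeqP$-complete.

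First, I would fix a source problem $\Pi$ that is $\SP$-complete \emph{under parsimonious reductions}; the natural candidate is counting perfect matchings in bipartite graphs, which is parsimoniously $\SP$-complete by Valiant. Given an instance $G$ of $\Pi$, I would construct in polynomial time a region $\Gamma_b(G) \ssu \Zb^3$ such that $\tau_b(\Gamma_b(G))$ equals the number of perfect matchings of $G$. The goal is to promote (or re-engineer) the gadgets from the $\SP$-completeness proof of $\tau_b$ in \cite{PY} into an exact bijection between perfect matchings of $G$ and brick tilings of $\Gamma_b(G)$, rather than merely an equality up to subtraction or a Turing reduction.

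Given such a parsimonious encoding, the reduction to $\text{\sc C}_{\text{BT}}$ is immediate: on input $(G,G')$ output $(\Gamma_b(G), \Gamma_b(G'))$; by construction, $\tau_b(\Gamma_b(G)) = \tau_b(\Gamma_b(G'))$ iff $G$ and $G'$ have the same number of perfect matchings, so $\text{\sc C}_{\text{BT}}$ is $\CeqP$-hard. To conclude, assume $\text{\sc C}_{\text{BT}} \in \PH$, say $\text{\sc C}_{\text{BT}} \in \Sigmap_m$. Then $\CeqP \subseteq \Sigmap_m$, and combining $\P^{\CeqP} = \P^{\SP}$ with Toda's theorem $\PH \subseteq \P^{\SP}$ yields $\PH \subseteq \P^{\Sigmap_m}$, forcing the collapse of $\PH$ to a finite level.

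The main obstacle is the parsimonious-encoding step, and the hint in the introduction that the ``type of reduction'' differs between the two tilings is precisely this point. For slabs, $\NP$-hardness of tileability lets one encode satisfying assignments directly in the existence of a tiling, and the natural reduction is already parsimonious. For bricks, a partial region almost always admits \emph{some} tiling, so the gadgets must instead pin down \emph{multiplicities} of local tilings exactly and eliminate parasitic configurations. The delicate part will be designing sufficiently rigid gadgets; one promising route is to start from a $\SP$-complete restriction of bipartite matching with enough combinatorial structure (for instance bounded-degree bipartite graphs), so that the intended matching freedom remains the only freedom available in the resulting region.
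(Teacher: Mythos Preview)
Your approach has a genuine gap at exactly the step you flag as ``the main obstacle,'' and it is an open problem rather than a technical hurdle. First, the claim that bipartite perfect-matching counting is ``parsimoniously $\SP$-complete by Valiant'' is false unless $\P=\NP$: a parsimonious reduction from $\text{\sc \#3SAT}$ to $\text{\sc \#PM}$ would reduce $\text{\sc 3SAT}$ to deciding existence of a perfect matching, which is in~$\P$. Second, the known $\SP$-completeness proof for $\tau_b$ in~\cite{PY} is a parsimonious reduction only from perfect matchings in \emph{$3$-regular} bipartite graphs, and the latter is proved $\SP$-complete via a \emph{non}-parsimonious reduction from the permanent~\cite{DL92}. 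The chain breaks precisely where you need it; ``re-engineering the gadgets'' to obtain a parsimonious reduction from general bipartite matchings is not a routine step, and the paper remarks explicitly that this is why the slab argument does not transfer.

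The paper's route is entirely different and bypasses parsimonious reductions. The key input is the combinatorial Theorem~\ref{t:domino}: for every $k\le c^n$ one can construct in polynomial time a planar region $\Ga\ssu\zz^2$ of size~$2n$ with exactly $k$ domino tilings. Viewing such $\Ga$ as a flat region in $\zz^3$ gives $\tau_b(\Ga)=\tau(\Ga)=k$. Hence the verification problem $\{\tau_b(\Ga)=^?k\}$ reduces to $\text{\sc C}_{\text{BT}}$: given $k$, build $\Ga'$ with $\tau_b(\Ga')=k$ and compare. One then runs the chain
\[
\PH \,\subseteq\, \P^{\SP} \,\subseteq\, \NP^{\<\text{verify }\tau_b\>} \,\subseteq\, \NP^{\<\text{\sc C}_{\text{BT}}\>}
\]
(Toda for the first inclusion, $\SP$-completeness of $\tau_b$ for the second), exactly as in the proof of Theorem~\ref{t:perm}. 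The ``different direction'' alluded to in the introduction is that for bricks the real work is the conciseness construction of Theorem~\ref{t:domino}, not a hardness reduction.
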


\smallskip

Note that we make no claims that \ts {\sc C$_{\text{BT}}$} \ts is \ts $\coNP$-hard.
Proof of that would require a major advance in computational complexity.
%(cf.~$\S$\ref{ss:finrem-coincidence}).
However, the theorem does imply that \ts {\sc C$_{\text{BT}}$} \ts is not in \ts $\P$,
\ts $\BPP$, \ts $\NP$, \ts $\coNP$, etc., unless \ts $\PH=\Sigmap_m$.
For the proof we again need some standard results in computational
complexity, combined with a curious combinatorial result of
independent interest (Theorem~\ref{t:domino}).

\smallskip

\subsection{Beyond tilings} \label{ss:intro-gen}
The paper starts with proofs of two theorems above which allows us to
develop tools to prove similar results for the coincidence problem of many
other combinatorial counting functions.  We group the results into
two, by analogy with the two theorems above.

Let \ts $f\in \SP$ \ts be a counting function.  The \defn{coincidence problem} \ts {\sc C}$_f$ \ts
is defined as follows:
$$\text{\sc C}_f \ := \ \big\{\ts f(x) \. = ^? \ts f(y) \ts \big\}.
$$

\begin{thm}\label{t:main-first}
The coincidence problem \. {\sc C}$_f$ \. is not in the polynomial hierarchy
unless the polynomial hierarchy  collapses to a finite level,
% \.  \ts {\sc C}$_f$ \ts
% $\in \PH \ \Rightarrow \ \PH=\Sigmap_m$ \. for some~$\ts m$,
where \ts $f$ \ts
is either one of the following:

\smallskip

{\small $(0)$} \ the number of satisfying assignments of a \ts {\sc 3SAT} \ts formula,

{\small $(1)$} \ the number of proper $3$-colorings of a planar graph,

{\small $(2)$} \ the number of Hamiltonian cycles in a graph,

{\small $(3)$} \ the number \ts $\pc_\pi(\si)$ \ts of patterns~$\ts \pi \ts$ in a permutation~$\ts \si$,

{\small $(4)$} \ the Kronecker coefficient \ts $g(\la,\mu,\nu)$ \ts for partitions \ts
$\la,\mu,\nu\vdash n$ \ts given in unary.

\smallskip

\nin
Furthermore, the coincidence problem \. {\sc C}$_f$ \. is \ts $\coNP$-hard in all these cases.
\end{thm}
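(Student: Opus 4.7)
The plan is to reduce each of the five cases to the base case $(0)$ via parsimonious polynomial-time reductions, and then to invoke the same general principle as in Theorem~\ref{t:slab-main}: if $f \in \SP$ admits a polynomial-time parsimonious reduction $\rho$ from the counting function $\#$\textsc{3SAT}, then $\text{\sc C}_f$ is both $\coNP$-hard and not contained in $\PH$ unless $\PH$ collapses to a finite level. Indeed, fixing any unsatisfiable \textsc{3SAT} formula $\phi_0$, the map $\phi \mapsto (\rho(\phi), \rho(\phi_0))$ reduces \textsc{UNSAT} to $\text{\sc C}_f$, yielding $\coNP$-hardness; and $(\phi_1, \phi_2) \mapsto (\rho(\phi_1), \rho(\phi_2))$ reduces the coincidence problem of case $(0)$ to $\text{\sc C}_f$, yielding the $\PH$ implication. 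So it suffices to prove both conclusions for case $(0)$ and then to exhibit a parsimonious reduction from $\#$\textsc{3SAT} to $f$ in each of cases $(1)$--$(4)$.

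For case $(0)$, $\coNP$-hardness is immediate from the reduction above with $\rho = \mathrm{id}$. Moreover, the coincidence problem for $\#$\textsc{3SAT} is the canonical complete problem for the exact-counting class $\CEP$, and the statement ``$\CEP \subseteq \PH$ implies $\PH$ collapses'' is standard: Toda's theorem yields $\PH \subseteq \BPP^{\PP}$, and combined with the closure of $\PP$ under Turing reductions to itself (Beigel--Reingold--Spielman), the chain $\CEP \subseteq \Sigmap_k \Rightarrow \PP \subseteq \Sigmap_{O(k)} \Rightarrow \PH \subseteq \BPP^{\Sigmap_{O(k)}} = \Sigmap_m$ closes up at some finite level~$m$. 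This is the same ``known computational complexity'' invoked after Theorem~\ref{t:slab-main}. For the remaining cases I use established parsimonious reductions from $\#$\textsc{3SAT}: for $(1)$, a parsimonious version of $\#$\textsc{3SAT}$\,\to\,\#$\textsc{3COL} together with a planarization via counting-preserving crossing gadgets; for $(2)$, a parsimonious variant of Karp's variable-and-clause gadget construction for Hamiltonian cycles; for $(3)$, a parsimonious refinement of the Bose--Buss--Lubiw encoding of \textsc{3SAT} into permutation pattern containment; for $(4)$, the Ikenmeyer--Mulmuley--Walter parsimonious reduction to Kronecker coefficients, whose output partitions have polynomially bounded parts and hence polynomial size under the unary encoding convention of the statement.

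The main obstacle is that several of these reductions appear in the literature only in a decision-version (NP-hardness) form, and verifying parsimoniousness requires careful bookkeeping of witness counts through each gadget. In case $(1)$, the planarization by crossing gadgets must preserve the number of proper $3$-colorings exactly, which forces each gadget to contribute a fixed multiplicative factor depending only on the number of crossings; the factor cancels in the coincidence comparison after uniform padding of both inputs. In case $(3)$, the Bose--Buss--Lubiw construction in general produces several pattern occurrences per satisfying assignment, which must be collapsed to one via supplementary gadgets. Case $(4)$ hinges on control over the part sizes of $\lambda, \mu, \nu$, which is the main technical content of the Ikenmeyer--Mulmuley--Walter construction. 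Throughout, ``parsimonious'' may be relaxed to the weaker form $f(\rho(\phi)) = c(|\phi|) \cdot \#\textsc{SAT}(\phi)$ with $c$ depending only on input size, since after padding the two compared inputs to equal length the factor~$c$ cancels in the coincidence comparison.
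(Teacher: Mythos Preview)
Your high-level strategy coincides with the paper's: establish case~$(0)$ via the $\CEP$-completeness of {\sc C}$_{\text{\sc \#3SAT}}$, then transfer to $(1)$--$(4)$ by parsimonious reductions from {\sc \#3SAT}. Two differences are worth noting. First, the paper (Proposition~\ref{p:pars}) derives the collapse directly from Tarui's inclusion $\PH \subseteq \NP^{\CEP}$ \cite{Tar91,Gre93}, which is shorter than your detour through Toda's theorem, $\PP$, and Beigel--Reingold--Spielman; the last of these is not actually needed, since $\PP \subseteq \P^{\CEP}$ by binary search already yields $\CEP \subseteq \Sigmap_k \Rightarrow \PH \subseteq \Sigmap_{k+2}$. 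Second, the paper simply cites that the required reductions in \cite{GJ79} (cases $(1),(2)$), \cite{BBL} (case $(3)$), and \cite{IMW17} (case $(4)$) are already parsimonious, so the obstacles you flag---crossing-gadget multiplicities in~$(1)$, multiple pattern occurrences per assignment in~$(3)$---do not in fact arise, and your weakly-parsimonious-plus-padding fallback, while a valid device in principle, is unnecessary here.
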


\smallskip

As we explain, the theorem follows from known complexity results.
This is in sharp contrast with our next theorem where each
part requires additional work.  We need just one definition.

\smallskip

A \defn{rational matroid} \ts is a matroid that is realizable over~$\qqq$.
We assume that this matroid is given by a set of $n$ vectors in~$\qqq^d$.
In this presentation, computing the number \ts $b(M)$ \ts of bases of a rational
matroid~$M$ is known to be $\SP$-complete \cite{Snook} (cf.~$\S$\ref{ss:finrem-matroids}).

\smallskip

\begin{thm}[{\rm main theorem}{}]
\label{t:main-combined}
The coincidence problem \. {\sc C}$_f$ \. is not in the polynomial hierarchy
unless the polynomial hierarchy  collapses to a finite level,
% :  \.  \ts {\sc C}$_f$ \ts $\in \PH \ \Rightarrow \ \PH=\Sigmap_m$ \. for some~$\ts m$,
where \ts $f$ \ts is either of the following:

\smallskip

{\small $(0)$} \ the number of perfect matchings \ts $\PM(P)$ \ts in a simple bipartite graph,

{\small $(1)$} \ the number of satisfying assignments of a \ts {\sc MONOTONE 2SAT} \ts formula,

{\small $(2)$} \ the number of independent sets \ts $\la(G)$ \ts in a planar bipartite graph,

{\small $(3)$} \ the number of order ideals \ts $\mu(P)$ \ts of a poset,

{\small $(4)$} \ the number of linear extensions \ts $e(P)$ \ts of a $2$-dimensional poset,

{\small $(5)$} \ the number of matchings \ts $\ma(G)$ \ts in a planar bipartite graph,

{\small $(6)$} \ the number of bases \ts $b(M)$ \ts of a rational matroid.
\end{thm}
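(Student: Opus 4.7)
The plan is to prove each of the seven parts by constructing a coincidence-preserving reduction from the brick tiling coincidence problem \. {\sc C$_{\text{BT}}$} \. of Theorem~\ref{t:brick-main}. For each listed function $f$, I would exhibit a polynomial-time map $\phi_f$ such that for all brick-tileable regions $\Ga, \Ga'$ (which, after padding, we may assume have equal size), $\tau_b(\Ga) = \tau_b(\Ga')$ if and only if $f(\phi_f(\Ga)) = f(\phi_f(\Ga'))$. The cleanest route is via parsimonious reductions satisfying $f(\phi_f(\Ga)) = \tau_b(\Ga)$, although for some parts it will suffice to preserve counts up to an additive or multiplicative correction that depends only on the input size. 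Granted such a reduction, the conclusion transfers immediately from Theorem~\ref{t:brick-main}.

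The first several parts can be handled by chains of essentially parsimonious reductions. Part~(0) is nearly tautological: brick tilings of $\Ga \ssu \zz^3$ are in bijection with perfect matchings of the adjacency graph $G_\Ga$, which is simple and bipartite under the canonical $2$-coloring of~$\zz^3$. Part~(1) follows by the vertex-cover/independent-set duality composed with a standard parsimonious passage from bipartite perfect matchings to independent sets in bipartite graphs, with clauses $x_i \vee x_j$ reading off edges of the associated graph. Parts~(2) and~(5) introduce planarity by splicing in classical parsimonious crossover gadgets from the planar $\SP$ literature. Part~(6) reduces to part~(0) via the parsimonious reduction of Snook \cite{Snook} from permanents to bases of rational matroids.

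Parts~(3) and~(4) demand more work. For part~(3), I would convert a planar bipartite graph produced by part~(2) into a poset whose order ideals are in bijection with its independent sets, via a standard sandwich construction that inserts a long chain separating the two color classes. For part~(4), the $2$-dimensional constraint on the poset is rigid: the poset must be realized as the intersection of two linear orders, leaving little room for gadgetry. My plan is to start from a suitably deformed Aztec-diamond-type region whose tilings are counted by linear extensions of a concretely described $2$-dimensional poset, and to verify parsimony by construction. I expect this last step to be the main obstacle: if the only available route to $\SP$-hardness for $2$-dimensional posets proceeds via polynomial interpolation, that technique destroys coincidences and an entirely new combinatorial construction will be required. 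A secondary difficulty lies in ensuring that each parsimonious building block in parts~(2), (5), and~(6) genuinely preserves counts rather than doing so only up to input-dependent factors that can differ on coincident inputs.
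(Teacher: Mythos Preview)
Your plan has a structural gap that affects almost every part past~{\small $(0)$}. The functions in parts~{\small $(1)$}--{\small $(6)$} are all strictly positive on every input: a monotone 2SAT formula is satisfied by the all-true assignment, every graph has the empty independent set and the empty matching, every poset has the empty order ideal and at least one linear extension, and every matroid has at least one basis. Since brick tilings (and perfect matchings) can be zero, no parsimonious reduction \ts $f(\phi_f(\Ga))=\tau_b(\Ga)$ \ts can exist from $\tau_b$ to any of these. Your fallback of ``additive or multiplicative correction depending only on the input size'' is not supported by the reductions you cite: the ``standard parsimonious passage from bipartite perfect matchings to independent sets'' does not exist in the literature, Snook's reduction to rational-matroid bases is not parsimonious for the same positivity reason, and the planar crossover gadgets you invoke typically introduce instance-dependent multiplicative factors rather than size-only corrections. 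For part~{\small $(4)$} you correctly flag the difficulty but offer only a speculative Aztec-diamond construction; the known $\SP$-hardness proofs for $2$-dimensional posets go through interpolation and, as you note, are useless for coincidences.

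The paper takes an entirely different route that sidesteps the need for any reduction between these problems. The key notion is that of a \emph{concise} function: for every value~$k$ in the range there is an instance of size polylogarithmic in~$k$ achieving it. For each function~$f$ in the list, the paper exhibits (or quotes) a concise subfamily \ts $\cY$ \ts on which $f$ can be evaluated in polynomial time --- e.g.\ width-two posets for~{\small $(4)$} via Kravitz--Sah, forests for~{\small $(5)$}, graphic matroids (spanning trees) for~{\small $(6)$} via Stong. Then the verification problem \ts $\{f(x)=^?k\}$ \ts reduces to the coincidence problem by \emph{guessing} a small $y\in\cY$ with $f(y)=k$ and checking both $f(y)=k$ (in~$\P$ on~$\cY$) and $f(x)=f(y)$. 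Combined with Toda's theorem and the $\SP$-completeness of~$f$, this gives \ts $\PH\subseteq \NP^{\<\text{\sc C}_f\>}$, and the collapse follows. In short, the paper replaces your chain of parsimonious reductions with a single combinatorial fact (conciseness) about each function separately.
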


\smallskip

Note that all counting functions in Theorems~\ref{t:main-first} and~\ref{t:main-combined}
are \ts $\SP$-complete.  Unfortunately, that by itself does not imply that the
corresponding coincidence problems are not in~$\PH$ (see~$\S$\ref{ss:finrem-coincidence}).
Example~\ref{ex:LE-restricted} (see also~$\S$\ref{ss:finrem-LE-height-two})
has an especially notable variation on part~{\small $(4)$} in the theorem for posets of height two.
While the number of linear extensions is known to be \ts $\SP$-complete in this case,
it is open whether the corresponding coincidence problem is in~$\PH$.  Similarly,
a variation on part~{\small $(6)$} for bicircular matroids gives another example of
this type (see~$\S$\ref{ss:finrem-matroids}).  The number of bases is known to
be \ts $\SP$-complete in this case, but corresponding coincidence problem
remains unexplored.

\smallskip

% for such as the number
% of linear extensions of posets, number of independent sets and number
% of matchings in graphs, number of bases of a matroid, Kronecker coefficients,
% etc.  Computing all these functions is $\SP$-hard.  Our results on the
% complexity of the coincidence problem in these cases are
% given in Main Theorem~\ref{t:main-combined}.

\subsection*{Paper structure}  We start with basic definitions and notation
in a short Section~\ref{s:def}. In  Section~\ref{s:domino}, we give some
preliminary results on domino tilings. In Section~\ref{s:perm}, we prove
Theorems~\ref{t:slab-main}, \ref{t:brick-main} and~\ref{t:main-first}.
In a lengthy Section~\ref{s:var}, we discuss further examples,
and prove Main Theorem~\ref{t:main-combined}.
We conclude with many final remarks and open problems in Section~\ref{s:finrem}.

\medskip

\section{Definitions and notation}\label{s:def}

\smallskip

\subsection*{General notation}
Let \ts $[n]=\{1,\ldots,n\}$ \ts and \ts $\nn=\{0,1,2,\ldots\}$.
For a sequence \ts $\ba =(a_1,\ldots,a_m)$, denote
\ts $|a| := a_1 + \ldots + a_m$\ts.  Similarly, for the integer
partitions \ts $\mu \subset \la$,  let the \defn{size} \ts $|\la/\mu|$ \ts be
the number of squares in the skew Young diagram~$\la/\mu$.  For \ts $|\la|=n$
\ts we also write \ts $\la\vdash n$.

\subsection*{Combinatorics}
We think of \ts $\zz^d \ssu \rr^d$ \ts both as a lattice and a collection
of the corresponding unit $d$-cube.  A \defn{region} \ts $\Ga \ssu \zz^d$ \ts
is a subset of $d$-cubes.  Denote by \ts $|\Ga|$ \ts the size of~$\Ga$,
which can also be viewed as the volume of the union of the corresponding unit cubes.
Region \ts $\Ga$ \ts is called \defn{simply connected} if the union of the
corresponding (closed) $d$-cubes is simply connected.  A \defn{tile}
\ts in \ts $\rr^d$ \ts  is a finite simply connected region.
For a set of tiles \ts $T=\{t_1,\ldots,t_m\}$, a \defn{tiling} \ts is a disjoint
union of copies of tiles~$t_i$ (unless stated otherwise, parallel translations,
rotations and reflections are allowed).

We assume that the reader is familiar with basic notions in algebraic combinatorics,
such as \ts \emph{standard Young tableaux}, \emph{Kostka numbers},
\emph{Littlewood--Richardson} \ts and \emph{Kronecker coefficients}.
Defining them, explaining their importance, combinatorial interpretations and
properties would take too much space and be a distraction.  We refer the
reader to \cite{Mac95,Man01,Sta-EC} and further references sprinkled
throughout the paper.

\subsection*{Complexity}
We assume that the reader is familiar with basic notions and results in
computational complexity and only recall a few definitions.  We use standard
complexity classes \. $\P$, \. $\NP$,\. $\coNP$, \. $\SP$, \. $\Sigmap_m$ \. and \. $\PH$.
The notation \. $\{a =^? b\}$ \. is used to denote the
decision problem whether \ts $a=b$.  We use the \emph{oracle notation} \ts
{\sf K}$^{\text{\sf L}}$ \ts for two complexity classes \ts {\sf K}, {\sf L} $\subseteq \PSPACE$,
and the polynomial closure \ts $\<${\sc A}$\>$ for a problem \ts {\sc A} $\in \PSPACE$.
We will also use less common classes \.
$$
\GapP:= \{f-g \mid f,g\in \SP\} \quad \text{and} \quad
\CEP:=\{f(x)=^?g(y) \mid f,g\in \SP\}.
$$
The distinction between \emph{binary} \ts and \emph{unary} \ts presentation
will also be important.  We refer to \cite{GJ78} and \cite[$\S$4.2]{GJ79}
for the corresponding notions of $\NP$-completeness and \emph{strong} \ts $\NP$-completeness.

We also that assume the reader is familiar with standard decision and
counting problems, such as \ts {\sc 2SAT}, \ts {\sc MONOTONE 2SAT},
\ts {\sc 3SAT}, \ts {\sc 1-in-3 SAT},  \ts {\sc HAMILTON CYCLE},
\ts {\sc \#2SAT}, \ts {\sc \#3SAT},
\ts {\sc PERMANENT}, etc.
Occasionally, we conflate counting functions $f$ and the problems
of computing~$f$.  We hope this does not lead to a confusion.

We refer to \cite{AB,Gold,MM,Pap} for definitions and standard results
in computational complexity.  See \cite{GJ79} for the classical introduction
and a long list of $\NP$-complete problems.  See also \cite[$\S$13]{Pak-OPAC}
for a recent overview of $\SP$-complete problems in combinatorics.
For surveys on counting complexity, see \cite{For,Scho}.

\medskip

%%%%%%%%%%%%%%%%%%%%%%%%%%%%%%%%%%%%%%%%%%%%%%%%%%%%%%%%%%%%%%%%%%%%%%%%
%%%%%%%%%%%%%%%%%%%%%%%%%%%%%%%%%%%%%%%%%%%%%%%%%%%%%%%%%%%%%%%%%%%%%%%%
%%%%%%%%%%%%%%%%%%%%%%%%%%%%%%%%%%%%%%%%%%%%%%%%%%%%%%%%%%%%%%%%%%%%%%%%
%%%%%%%%%%%%%%%%%%%%%%%%%%%%%%%%%%%%%%%%%%%%%%%%%%%%%%%%%%%%%%%%%%%%%%%%
%%%%%%%%%%%%%%%%%%%%%%%%%%%%%%%%%%%%%%%%%%%%%%%%%%%%%%%%%%%%%%%%%%%%%%%%
%%%%%%%%%%%%%%%%%%%%%%%%%%%%%%%%%%%%%%%%%%%%%%%%%%%%%%%%%%%%%%%%%%%%%%%%
%%%%%%%%%%%%%%%%%%%%%%%%%%%%%%%%%%%%%%%%%%%%%%%%%%%%%%%%%%%%%%%%%%%%%%%%

\section{Counting tilings}\label{s:domino}

%  Let \ts $\Ga \ssu \zz^2$ \ts be a finite region in the plane that is not
% necessarily simply-connected.

\subsection{Domino tilings}\label{ss:domino-main}
Denote by \ts $\cT(n)$ \ts the set of numbers of domino tilings
over all regions of size~$2n$:
$$
\cT(n) \ := \ \big\{\. \tau(\Ga)\., \ \ \text{where} \ \ \Ga \ssu \zz^2, \ |\Ga| \ts =\ts 2n\.\big\}.
$$
Clearly, \ts $\cT(n) \subseteq \{0,1,\ldots,4^n\}$ \ts since each domino
tilings is determined by the $4$ choices for a domino at every
even square.  The following result proves a converse:

\smallskip

\begin{thm}  \label{t:domino}
There is a constant \ts $c>1$, such that \. $\cT(n) \supseteq \{0,1,\ldots,c^n\}$,
for all \ts $n\ge 1$.  Moreover, for all \ts $k \le c^n$, a region \ts
$\Ga\ssu \zz^2$ \ts with \ts $\tau(\Ga)=k$ \ts and \ts $|\Ga|=2n$,
can be constructed in time polynomial in~$n$.
\end{thm}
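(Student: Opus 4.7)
The plan is to construct $\Gamma$ explicitly from three primitives, using the binary expansion $k = \sum_{i=0}^{N-1} b_i 2^i$ where $N = O(n)$ since $k \le c^n$. I would use: (i) a base region with $\tau = 1$ (a single $2\times 1$ domino); (ii) a \emph{multiplication-by-two} operation, given trivially by disjoint union with a $2\times 2$ square $Q$ since $\tau(A \sqcup Q) = 2\tau(A)$ and $|A \sqcup Q| = |A| + 4$; and (iii) an \emph{addition} operation $\Phi$ satisfying $\tau(\Phi(A,B)) = \tau(A) + \tau(B)$ with $|\Phi(A,B)| = |A| + |B| + O(1)$. The region would then be built recursively via $\Gamma_k = \Phi\bigl(\Gamma_{\lfloor k/2 \rfloor} \sqcup Q,\; R_{b_0}\bigr)$, where $R_0$ is empty and $R_1$ is a single domino.

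The hard part will be designing the constant-size addition gadget $\Phi(A,B)$. The approach I have in mind is an ``exclusive switch'': a small gadget with a distinguished cell $c$ adjacent to a designated boundary ``port'' of $A$ on one side and of $B$ on the other, shaped so that the domino covering $c$ must extend into one of the two ports (any other direction being blocked by the geometry of the switch). Each of the two choices absorbs the corresponding port cell into the switch and leaves a sub-tiling problem on one of $A$ or $B$, while the other sub-region must tile in a uniquely forced way by the design of its port. If each port cell is chosen so that in every tiling of $A$ (respectively $B$) it is paired with a specific forced neighbor, then the absorption bijects with the tilings of $A$ (respectively $B$), yielding $\tau(\Phi) = \tau(A) + \tau(B)$. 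Correctness would be verified by a short case analysis on the finitely many local configurations inside the switch.

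Combining, the total size satisfies $|\Gamma_k| \le C \log_2 k \le C n \log_2 c$ for some absolute constant $C$ depending only on the gadget. Choosing $c > 1$ small enough so that $C \log_2 c < 2$, I obtain $|\Gamma_k| < 2n$. I would then pad with $n - |\Gamma_k|/2$ disjoint $2 \times 1$ dominos (each contributing factor $1$ to $\tau$) to reach exactly $|\Gamma| = 2n$. The $k = 0$ case is handled separately using any easily constructed non-tileable region of area $2n$ (for instance, a connected region with unequal numbers of black and white cells in the checkerboard coloring, padded by dominos). The full construction runs in time polynomial (indeed linear) in $n$, yielding both claims of the theorem. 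The entire argument thus reduces to the single combinatorial task of exhibiting and verifying the constant-size switch gadget, which is the real content of the proof.
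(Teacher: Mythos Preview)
Your high-level strategy---binary expansion via a doubling step and a plus-one step---is exactly the paper's approach, but there is a genuine gap in the gadget design, and it is not merely a matter of filling in a case analysis. Your switch $\Phi$ is supposed to give $\tau(A)+\tau(B)$ by having one branch contribute $\tau(A)\cdot 1$ and the other $1\cdot\tau(B)$. For the second factor in each branch to equal~$1$, you need the \emph{other} region, after the switch has committed, to admit exactly one tiling. In your recursion $B\in\{R_0,R_1\}$ is fine, but on the $A$ side this forces an invariant: there must exist designated cells in $A$ whose removal (or absorption) leaves a region with \emph{exactly one} tiling. You never establish this invariant, and your doubling step actively destroys it: if $A$ carries such a port, then $A\sqcup Q$ does not, since removing the port cells from $A\sqcup Q$ leaves (modified $A$)$\,\sqcup\,Q$ with $1\cdot 2=2$ tilings, not~$1$; and placing the port on $Q$ instead leaves an L-tromino, which has $0$ tilings. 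So the naive disjoint-$Q$ doubling is incompatible with any plus-one gadget of the kind you describe.

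The paper handles precisely this issue by working throughout with pairs: it defines $\mathcal D(a,1)$ to be regions $\Gamma$ with $\tau(\Gamma)=a$ \emph{and} $\tau(\Gamma-x-y)=1$ for specified squares $x,y$ (packaged geometrically as a ``$C$-triple''), and then designs \emph{both} the $a\mapsto 2a$ and $a\mapsto 2a+1$ steps as local attachments that simultaneously update the count and preserve the second coordinate equal to~$1$. The doubling is therefore not a disjoint $2\times 2$ square but a bespoke gadget glued at the $C$-triple interface. Your proposal would become correct if you replaced the disjoint-union doubling by such an invariant-preserving transformation and carried the pair $(a,1)$ through the recursion; as written, the plus-one step cannot be made to work because the required ``unique tiling on the other side'' is not available after doubling.
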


\smallskip

\begin{rem}\label{r:Nadeau}
{\rm
It was known before that \. $\cup_n \cT(n) = \nn$,  i.e.\ that every
nonnegative integer is the number of domino tilings of \emph{some} \ts
region.  This was shown by Philippe Nadeau with an elegant explicit
construction.\footnote{See
\href{https://mathoverflow.net/a/178100/4040}{mathoverflow.net/a/178100}\ts.}
Unfortunately, this construction has \ts $\tau(\Ga)=\Theta(n)$ \ts
where \ts $n=|\Ga|$,  and thus does not give our theorem.

In a different direction, Brualdi and Newman in~\cite{BN65} gave
(in the language of permanents), an explicit construction of
a simple bipartite graph on \ts $n$ \ts vertices with exactly~$k$
perfect matchings, for all \ts $0\le k \le 2^{n-1}$.
These graphs have unbounded degree and thus very far
from being grid graphs (or even planar graphs).  There was
a recent constant factor improvement in~\cite{GT18}, but relatively
little attention otherwise
(see \cite[\href{http://oeis.org/A089477}{A089477}]{OEIS}), compared to the
corresponding determinant problem (see~$\S$\ref{ss:finrem-det}).
}\end{rem}

\smallskip

\begin{proof}[Proof of Theorem~\ref{t:domino}]
For an integer \ts $k\ge 0$, we give an explicit construction
of a region \ts $\Ga\ssu \zz^2$ \ts with \ts $\tau(G)=k$ \ts and
\ts $|\Ga| = O(\log k)$.  This implies the result.

A square \ts $(i,j)\in \zz^2$ \ts is called \defn{even}
(\defn{odd}\.{}) if \. $i+j$ \. is even (odd). Denote by \ts
$\Ga_e=\Ga\cap \zz_e^2$ \ts and \ts $\Ga_o:=\Ga\cap \zz_o^2$ \ts the
sets of even and odd squares in~$\Ga$, respectively.
Clearly, if \ts $|\Ga_e|\ne |\Ga_o|$ \ts then \ts $\tau(\Ga)=0$.
%
%Let~$x\in \Ga_e$ be an even square and let $y\in \Ga_o$ be an odd
%square in $\Ga\ssu \zz^2$.
%
We use \ts $\Ga-x-y$ \ts to denote
$\Ga$ with squares $x,y$ removed.
Denote by \ts $\cD(a,b)$ \ts the set of regions \ts $\Ga$ such that
\ts $\tau(\Ga)=a$ \ts and \ts $\tau(\Ga-x-y) = b$ \ts for some \ts $x\in \Ga_e$ \ts
and \ts $y\in \Ga_o$.

Below we give constructions which prove the following implications:
\begin{equation}\label{eq:imply}
\aligned
& \cD(a,1) \ne \emp \ \Longrightarrow  \ \cD(2a,1) \ne \emp\ts, \\
& \cD(a,1) \ne \emp \ \Longrightarrow  \ \cD(2a+1,1) \ne \emp\ts.
\endaligned
\end{equation}
Starting with  \. $\cD(1,1)\ne \emp$ \ts and iterating these
in \ts $O(\log k)$ \ts times gives the desired \ts $\Ga\in \cD(k,1)$.

For a region \ts $\Ga$ \ts and squares \ts $x,y\in\Ga$, we say that \ts $(\Ga,x,y)$ \ts is a \defn{$C$-triple} \ts if

\smallskip

\qquad $\circ$ \  $x=(i,j)\in \Ga_e$ \ts and \ts $y=(i,j+7)\in \Ga_o$\ts,

\qquad $\circ$ \  $(u,v)\notin\Ga$ \ts for all \ts $u>i$,

\qquad $\circ$ \  $(u,v)\notin\Ga$ \ts for all \ts $i-1\le u \le i$ \ts and \ts $j+1 \le v\le  j+6$.

\smallskip
\begin{figure}[hbt]
\begin{center}
	\includegraphics[height=3.3cm]{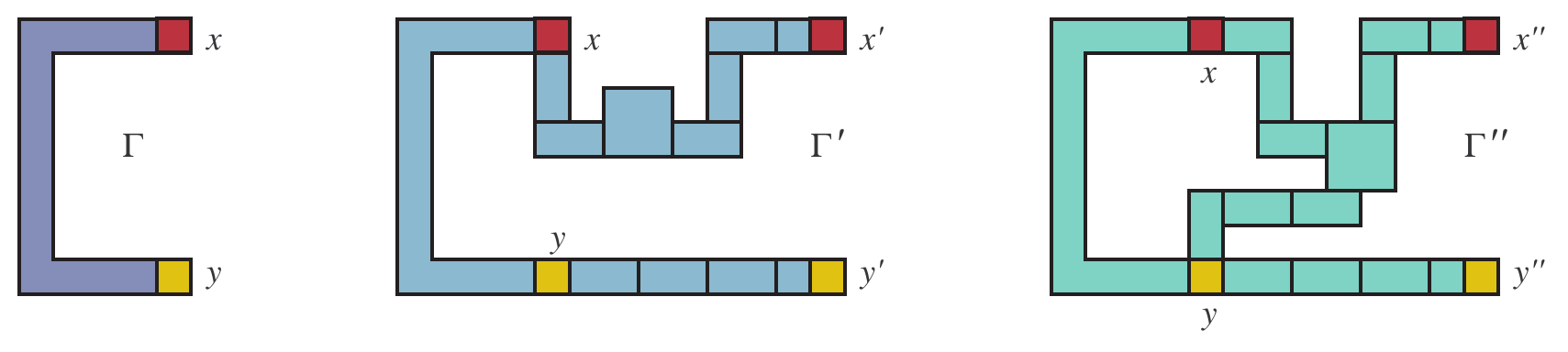}
\vskip-.3cm
\caption{Region $\Ga \in \cD(1,1)$, and two transformations \ts $\Ga\in \cD(a,1) \ts \Rightarrow \ts \Ga'\in \cD(2a,1)$,
and \ts $\Ga\in \cD(a,1) \ts \Rightarrow \ts \Ga''\in \cD(2a+1,1)$.}
\label{f:imply}
\end{center}
\end{figure}

\nin
We start with a $C$-triple \ts $(\Ga,x,y)\in \cD(1,1)$ \ts as in Figure~\ref{f:imply}.
We then define two transformations \. $(\Ga,x,y)\to (\Ga',x',y')$ \. and \.
$(\Ga,x,y)\to (\Ga'',x'',y'')$ \. as in the figure, which prove the implications~\eqref{eq:imply}.
Note that in both cases we obtain two $C$-triples by adding at most 30 squares.  This completes
the construction.  \end{proof}

%\vskip-.2cm

\smallskip

\begin{cor}\label{c:gen}
In notation of the proof above, we have \ts $\cD(a,b)\ne \emp$ \ts
for all \ts $a,b\ge 0$.
\end{cor}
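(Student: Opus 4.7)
The plan is to extend the construction in Theorem~\ref{t:domino}, which already yields $\cD(k, 1) \ne \emp$ for every $k \ge 1$, by supplementing it with a dual mechanism that manipulates the second parameter.

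First I would handle the edge cases where $a = 0$ or $b = 0$. For $\cD(0, b)$ with $b \ge 1$, take the disjoint union $\Gamma = \Gamma_b \sqcup K$, where $\Gamma_b \in \cD(b, 1)$ is from Theorem~\ref{t:domino} and $K = \{(0, 0), (0, 1), (1, 1), (2, 3)\}$ is a balanced $4$-square configuration in which $(2, 3)$ is isolated from the other three squares; this forces $\tau(K) = 0$. Taking $x = (1, 1) \in \Gamma_e$ and $y = (2, 3) \in \Gamma_o$ gives $\tau(K - x - y) = \tau(\{(0, 0), (0, 1)\}) = 1$, so $\tau(\Gamma) = 0$ and $\tau(\Gamma - x - y) = b$. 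For $\cD(a, 0)$ with $a \ge 1$, take $\Gamma = \Gamma_a \sqcup D$ with $\Gamma_a \in \cD(a, 1)$ and $D$ a disjoint single domino, and choose the removal pair to strand one endpoint of $D$ as an unmatched square. Finally, $\cD(0, 0)$ is witnessed by any region with an odd number of squares.

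For the main case $a, b \ge 1$, I would establish dual versions of the implications~\eqref{eq:imply}, namely
$$\cD(a, b) \ne \emp \implies \cD(a, 2b) \ne \emp \quad \text{and} \quad \cD(a, b) \ne \emp \implies \cD(a, 2b+1) \ne \emp.$$
These require constructing \emph{dual gadgets} with behavior opposite to those of Figure~\ref{f:imply}: when attached to a $C$-triple, a dual gadget should admit a unique local tiling whenever the designated pair $(x, y)$ is present (so $\tau(\Gamma)$ is unaffected), and should admit two (respectively three) local tilings once $(x, y)$ are removed, with each such tiling propagating into $\Gamma$ and forcing it into its unique $\tau(\Gamma - x - y) = 1$ tiling (so $\tau(\Gamma - x - y)$ is doubled, respectively doubled-and-incremented). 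Combining the four transformations (two original, two dual) and iterating them starting from $\cD(1, 1) \ne \emp$, one reaches every pair $\cD(a, b)$ with $a, b \ge 1$ in $O(\log a + \log b)$ steps via binary expansion of $a$ and~$b$.

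The main obstacle is the explicit construction of the dual gadgets. They must exploit the asymmetric geometry of the $C$-triple in reverse: the added squares should form a rigid configuration whose unique tiling (when $(x, y)$ are present) leaves the tilings of $\Gamma$ free, but whose multiple tilings (once $(x, y)$ are removed) each force the same specific tiling of $\Gamma$ through propagation across the $C$-triple. Such a gadget should be realizable within a bounded-size patch comparable to the $30$-square gadgets of Figure~\ref{f:imply}, and verified by a finite case analysis of local tilings.
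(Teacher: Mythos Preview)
Your edge-case constructions for $a=0$ or $b=0$ are fine (up to harmless translation issues), but the core of the argument --- the ``dual gadgets'' --- cannot work as stated, and this is a structural obstruction rather than a missing detail.

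Suppose $G$ is any bounded gadget attached to a $C$-triple $(\Ga,x,y)$, with the new pair $x',y'$ inside~$G$ and with $G$ meeting $\Ga$ only along $x$ and~$y$ (the $C$-triple geometry essentially forces this if you want to iterate).  Split the tilings of $\Ga':=\Ga\cup G$ according to whether the dominoes covering $x$ and $y$ cross into~$G$; since $\tau(\Ga-x)=\tau(\Ga-y)=0$ by parity, only the ``neither crosses'' and ``both cross'' cases survive, giving
\[
\tau(\Ga') \, = \, p\ts a \, + \, q\ts b\ts, \qquad
\tau(\Ga'-x'-y') \, = \, p' a \, + \, q' b\ts,
\]
where $p,q,p',q'\in\nn$ are determined entirely by~$G$ (they count tilings of $G$ and of $G$ with appropriate boundary squares deleted).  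Your rule $\cD(a,b)\to\cD(a,2b)$ asks for $(p,q,p',q')=(1,0,0,2)$, which is at least conceivable.  But $\cD(a,b)\to\cD(a,2b+1)$ demands $p'a+q'b=2b+1$ with \emph{fixed} integers $p',q'$; this forces $p'a=1$, hence $a=1$.  So no bounded gadget can realize the ``$+1$'' step for general~$a$, and your binary-expansion iteration breaks after the very first dual move.  Your own wording flags the inconsistency: you describe the gadget as ``forcing $\Ga$ into its unique $\tau(\Ga-x-y)=1$ tiling'', which already presupposes $b=1$ and so precludes iteration.

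The paper sidesteps this obstruction with two different moves: a \emph{swap} $\cD(a,b)\to\cD(b,a)$, and a \emph{bilinear combination} $\cD(a,b),\,\cD(a',b')\to\cD(aa'+bb',\,bb')$.  Feeding $\cD(n,1)$ and $\cD(1,m)$ into the latter yields $\cD(n+m,m)$ directly, and together with the swap this covers all pairs with $a,b\ge 1$; the zero-coordinate cases are handled by two further small gadgets.  The swap is the key idea you are missing --- it lets one reuse the already-proved $\cD(k,1)\ne\emp$ in the second coordinate without needing any ``$+1$'' mechanism there.
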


\begin{proof}
By modifying our two transformations, one can show that \.
\begin{equation}\label{eq:two}
\aligned
& \cD(a,b)\ne\emp \ \Longrightarrow \ \cD(b,a)\ne\emp\ts, \\
& \cD(a,b)\ne\emp \ts, \ \cD(a',b')\ne\emp \ \Longrightarrow \ \cD(aa'+bb',bb')\ne\emp\ts.
\endaligned
\end{equation}
The first of these is given by \ts $(\Ga,x,y) \to (\Ga',x',y')$ \ts as in Figure~\ref{f:ext}.
Similarly, the second is given by $(\Ga,x,y), (\Ga,x',y') \to (\Ga'',x,y')$ \ts as in
the figure.  Note that \ts $(\Ga'',x,y')$ \ts is no longer a $C$-triple, so this
transformation can be used only once.

In the proof above, we showed that \ts $\cD(n,1)\ne \emp$ \ts for all \ts  $n\ge 0$.
By the first transformation in~\eqref{eq:two}, this implies that \ts $\cD(1,m)\ne \emp$ \ts
for all \ts  $m\ge 0$.  Therefore, by the second transformation in~\eqref{eq:two},
we have \ts $\cD(m+n,m)\ne \emp$.  We then have \ts $\cD(m,m+n)\ne \emp$ \ts by the first
(after a modification where $x'$ and $y'$ are above and placed below $x'$ and $y'$,
respectively). Note that the second transformation is used only here, and thus
the construction is well defined.

The remaining cases in \ts $\cD(n,0)$ \ts and \ts $\cD(0,n)$, follow from
the two transformations above applied to \ts $\cD(1,0)$.   Alternatively,
they follow the last two  transformations in Figure~\ref{f:ext};
the details are straightforward.  This finishes the proof.
\end{proof}

\vskip-.5cm

\begin{figure}[hbt]
\begin{center}
	\includegraphics[height=3.6cm]{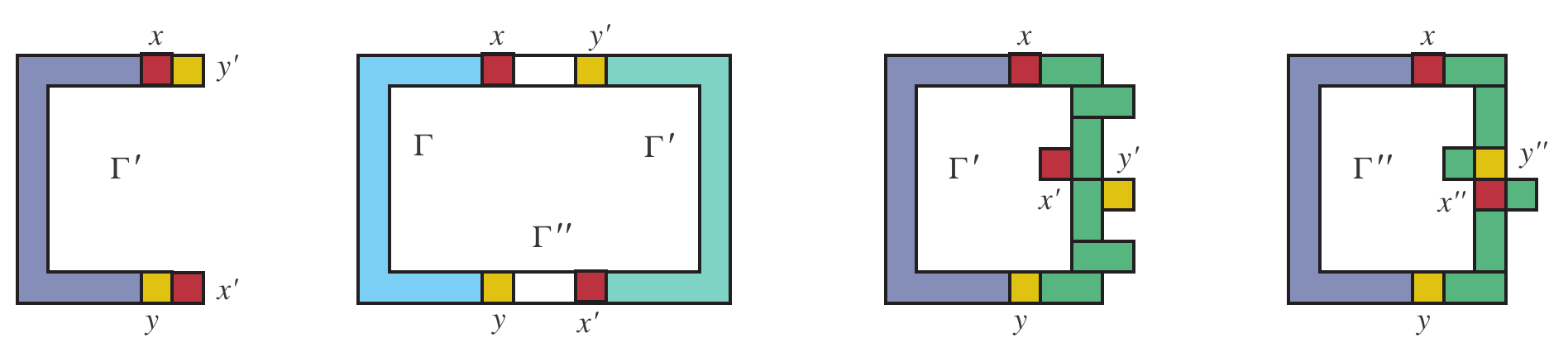}
\vskip-.3cm
\caption{Four transformations: \. $\Ga\in \cD(a,b) \ts \Rightarrow \ts \Ga'\in \cD(b,a)$, \
\ts $\Ga\in \cD(a,b)$, \ts $\Ga'\in \cD(a',b')  \ts \Rightarrow \ts \Ga''\in \cD(aa'+bb',bb')$, \
$\Ga\in \cD(a,b) \ts \Rightarrow \ts \Ga'\in \cD(0,a)$ \, and \, $\Ga\in \cD(a,b) \ts \Rightarrow \ts \Ga''\in \cD(a,0)$.}
\label{f:ext}
\end{center}
\vskip-.3cm
\end{figure}

\smallskip

\begin{cor} \label{c:domino-single}
Let \ts $\cD'(a,b)$ \ts be the set of regions \ts $\Ga$ such that
\ts $\tau(\Ga)=a$ \ts and \ts $\tau(\Ga-x-y) = b$ \ts for some domino \ts
$(x,y)$, where \ts $x,y\in \Ga$.  Then \ts $\cD'(a,b) \ne \emp$, \ts
for all \. $0\le b \le a$.
\end{cor}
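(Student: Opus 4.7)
The bound $b \le a$ is necessary: whenever $(x,y)$ is a domino in $\Ga$, every tiling of $\Ga - x - y$ extends to a distinct tiling of $\Ga$ by replacing the domino, giving $\tau(\Ga - x - y) \le \tau(\Ga)$. For the existence direction, my plan is to reduce to Corollary \ref{c:gen} by attaching a short horizontal corridor above a $C$-triple, converting a member of $\cD(a-b,b)$ into a member of $\cD'(a,b)$.

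Concretely, given $0 \le b \le a$, take $\Ga \in \cD(a-b,b)$ together with a $C$-triple $(\Ga,x,y)$, where $x = (i,j) \in \Ga_e$ and $y = (i,j+7) \in \Ga_o$. Set $\Ga' := \Ga \cup \{C_0, C_1, \ldots, C_7\}$ with $C_k := (i+1,j+k)$. The $C$-triple property guarantees that each $C_k$ lies outside $\Ga$ and that inside $\Ga'$ the cell $C_0$ has only the two neighbors $x$ and $C_1$, the cell $C_7$ has only the two neighbors $y$ and $C_6$, and each middle $C_k$ has only the two neighbors $C_{k-1}, C_{k+1}$. Split the tilings of $\Ga'$ according to how $C_0$ is matched. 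If $C_0 - x$, a cascade forces $C_1 - C_2$, $C_3 - C_4$, $C_5 - C_6$, $C_7 - y$, leaving $\Ga - x - y$ to contribute $b$ tilings. If instead $C_0 - C_1$, then $C_2 - C_3$, $C_4 - C_5$, $C_6 - C_7$ are forced, and $\Ga$ (still containing $x$ and $y$) contributes $a-b$ tilings. Hence $\tau(\Ga') = a$. Removing the horizontal domino $(C_1, C_2)$ kills the second case (it forces $C_0 - x$, and then $C_3 - C_4$, $C_5 - C_6$, $C_7 - y$ by the same cascade), so $\tau(\Ga' - C_1 - C_2) = b$. Thus $\Ga' \in \cD'(a,b)$.

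The one subtle step is to verify that Corollary \ref{c:gen} can be arranged to produce $\Ga \in \cD(a-b,b)$ equipped with a $C$-triple, since the product transformation in its proof is the one step that need not a priori preserve the $C$-triple structure. The base case $\cD(n,1)$ in Theorem \ref{t:domino} is built from $C$-triples, and the doubling and swap transformations preserve the property; for the product transformation one can always translate the two glued inputs so that the output still has an empty row directly above the distinguished pair $(x,y)$ and empty intermediate cells between them---this is all that the corridor construction above uses. The boundary cases $b = 0$ and $b = a$ additionally admit even simpler direct constructions, namely taking a disjoint $1 \times 4$ strip or a disjoint single domino alongside a region of the right total count, and serve as consistency checks for the general construction.
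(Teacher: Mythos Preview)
Your corridor construction is correct whenever you actually have a \emph{genuine} $C$-triple in $\cD(a-b,b)$: the neighbor analysis and the two-case cascade are fine, and removing $(C_1,C_2)$ does force the $\tau(\Ga-x-y)=b$ branch.  The problem is your third paragraph.  Corollary~\ref{c:gen} explicitly records that after the product transformation the output pair $(\Ga'',x,y')$ is \emph{not} a $C$-triple, and the specific relaxation you propose does not hold either.  In the product, $x$ comes from the first factor and $y'$ from the second, so they sit in the same row~$i$ but at horizontal distance strictly larger than~$7$; your $8$-cell corridor does not even reach from one to the other.  Even if you lengthen the corridor to the correct even length, the ``empty intermediate cells'' condition fails: the cells $y$ and $x'$ (and the connector between them) lie in row~$i$ strictly between $x$ and $y'$, so the corridor cells directly above them pick up extra neighbors and the cascade argument breaks.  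Translating the two glued inputs does not remove $y$ and $x'$ from that row.  So the reduction to Corollary~\ref{c:gen} as you have written it does not go through.

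The paper avoids this issue by working one level lower.  It does not try to find a $C$-triple in $\cD(a-b,b)$; instead it takes $\Ga\in\cD(a-b,1)$ and $\Ga'\in\cD(1,b)$ (both with $C$-triples), applies the product transformation of~\eqref{eq:two} once to land \emph{directly} in $\cD(a,b)$, and then observes that one of the two ``white dominos'' in the connector gadget itself is a domino of $\Ga''$ whose removal forces the count down to~$b$.  In other words, the domino you are looking for is already sitting inside the product gadget --- there is no need to bolt on an external corridor.  Your corridor idea could likely be rescued by routing a longer bent path above the connector, but that requires analyzing the gadget in Figure~\ref{f:ext} anyway, at which point the paper's one-line observation is the shorter route.
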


\begin{proof}
Note that if \ts $(x,y)$ \ts is a domino in~$\Ga$, then \ts
$\tau(\Ga-x-y) \le \tau(\Ga)$. Thus the assumption \ts $b\leq a$ \ts
in the claim.  Now, for \ts $\Ga \in \cD(a-b,1)$ \ts where \ts $b \ge 1$, and \ts
$\Ga'\in \cD(1,b)$, the second transformation in \eqref{eq:two}
gives a region \ts $\Ga''\in \cD(a,b)$ \ts as in Figure~\ref{f:ext}.
Removing one white domino and keeping the other, gives the desired
region in \ts $\cD'(a,b)$.

Similarly, taking a region \ts
$\Ga \ssu \zz^2$ \ts with \ts $\tau(\Ga) = a$, attaching a domino
$(x,y)$ to a top right square \ts $z \in \Ga$ \ts gives region \ts
$\Ga'\in \cD'(a,0)$ \ts since \ts $\tau(\Ga+x+y)=\tau(\Ga)$ \ts
and \ts $\tau(\Ga-x-z)=0$.  The details are straightforward.
\end{proof}

\smallskip

\subsection{Slab tilings}\label{ss:domino-slab}
Denote by \ts $\cT_s(n)$ \ts the set of numbers of tilings with slabs:
%over all regions in~$\zz^3$ of area~$4n$:
$$
\cT_s(n) \ := \ \big\{\. \tau_s(\Ga)\., \ \ \text{where} \ \ \Ga \ssu \zz^3, \ |\Ga|\ts = \ts 4n\.\big\}.
$$

%\smallskip

\begin{thm}  \label{t:slab-inverse}
There is a constant \ts $c>1$, such that
\. $\cT_s(n) \supseteq \{0,1,\ldots,c^n\}$,
for all \ts $n\ge 1$.  Moreover, for all \ts $k \le c^n$, a region \ts
$\Ga\ssu \zz^3$ \ts with \ts $\tau_s(\Ga)=k$ \ts and \ts $|\Ga|=2n$, can be constructed in time
polynomial in~$n$.
\end{thm}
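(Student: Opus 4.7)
The plan is to obtain the 3D slab construction by lifting the 2D domino construction of Theorem~\ref{t:domino} into $\zz^3$. Given an integer $k \ge 0$, I would first apply Theorem~\ref{t:domino} to produce a region $\Ga \ssu \zz^2$ with $\tau(\Ga) = k$ and $|\Ga| = O(\log k)$, and then define the height-two region
\[
\Ga' \ := \ \Ga \times \{0,1\} \ \ssu \ \zz^3.
\]
The target identity is $\tau_s(\Ga') = \tau(\Ga) = k$, which, as explained below, will hold provided that $\Ga$ is \emph{thin}, meaning that $\Ga$ contains no $2 \times 2$ sub-square.

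For the bijection, observe that a $2 \times 2 \times 1$ slab has three orientations in $\zz^3$, according to which axis carries the dimension of length one. If the short direction is along $z$, the slab lies entirely in one of the two layers $\zz^2 \times \{0\}$ or $\zz^2 \times \{1\}$ and occupies a $2 \times 2$ sub-square of $\Ga$; thinness of $\Ga$ forbids such slabs. If the short direction is along $x$ or $y$, the slab spans both layers and its projection to $\zz^2$ is either a $2 \times 1$ or $1 \times 2$ domino inside $\Ga$. Hence the map from slab tilings of $\Ga'$ to domino tilings of $\Ga$, obtained by projecting each slab to its underlying domino, is a well-defined bijection whose inverse doubles each domino in the $z$-direction. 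This yields $\tau_s(\Ga') = \tau(\Ga) = k$, and since $|\Ga'| = 2|\Ga| = O(\log k)$, setting $4n = |\Ga'|$ gives $k \le c^n$ for some $c > 1$.

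The step I expect to require some care is verifying that the construction in the proof of Theorem~\ref{t:domino} can be arranged so that $\Ga$ is thin. Inspecting Figure~\ref{f:imply}, the base $C$-triple in $\cD(1,1)$ has a natural corridor-of-width-one shape, and the two transformations $\Ga \to \Ga'$ and $\Ga \to \Ga''$ each attach at most $30$ new squares near the endpoints $x, y$. I would check case-by-case that each attached gadget can be routed as a width-one snake, and that the ``empty strip'' conditions in the definition of a $C$-triple prevent successive gadgets from colliding to form a $2 \times 2$ block. The hard part will be handling any gadget that naturally contains a $2 \times 2$ block; in those cases I would elongate the corridor or insert a short detour to split the block, adding only a constant number of squares per iteration, which preserves both the $O(\log k)$ size bound and the polynomial-time construction. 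As a fallback, one could redo the doubling argument of Theorem~\ref{t:domino} directly in $\zz^3$, designing 3D analogues of $C$-triples and transformations using slabs in place of dominoes.
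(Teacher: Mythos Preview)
Your approach is correct and essentially identical to the paper's: lift $\Ga$ to $\Ga\times\{0,1\}$, use thinness to get $\tau_s(\Ga\times\{0,1\})=\tau(\Ga)$, and then modify the gadgets of Theorem~\ref{t:domino} to avoid $2\times 2$ blocks. The paper's concrete fix for the one offending $2\times 2$ block in the Figure~\ref{f:imply} transformations is to replace it by a $3\times 3$ square with the center removed (a $3\times 3$ annulus), which preserves the tiling count and is exactly the kind of constant-size detour you anticipated.
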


\smallskip

Note that the corresponding result for the set \ts $\cT_b(n)$ \ts
of numbers of tiling with bricks, follows trivially from Theorem~\ref{t:domino}
since \. $\cT(n) \subseteq \cT_b(n)$.

\begin{proof}[Proof of Theorem~\ref{t:slab-inverse}]  The result follows
from the proof of Theorem~\ref{t:domino}.  Indeed, for every region \ts
$\Ga\ssu \zz^2$ \ts we can take a 2-layered region \ts
$\Ga_2:=\Ga \times \{0,1\} \ssu \zz^3$. Assuming \ts $\Ga$ \ts does not
have a \ts $2\times 2$ \ts square inside, we have \ts $\tau_s(\Ga_2) = \tau(\Ga)$.
The result now follows from reductions in Figure~\ref{f:imply}, where the \ts
$2\times 2$ \ts square in the middle is replaced by a \ts $3\times 3$ \ts
square without a center square (see an example below). Note that the notion
of $C$-triples  also needs to be  adjusted accordingly.  The details are straightforward.
\end{proof}
% in Figure~\ref{f:gadget}.
%
\vskip-.8cm
\begin{figure}[hbt]
\begin{center}
	\includegraphics[height=1.8cm]{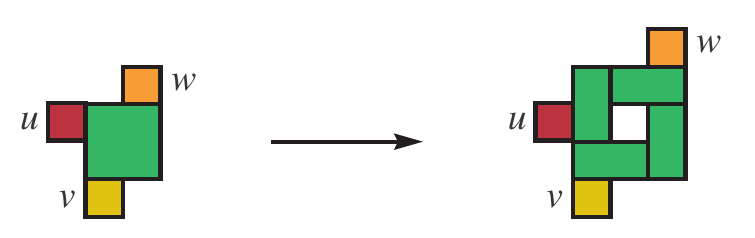}
%
%\caption{The replacement gadget in the proof of Theorem~\ref{t:slab-inverse}.}
%\label{f:gadget}
\end{center}
\end{figure}
\vskip-.1cm

\medskip

%%%%%%%%%%%%%%%%%%%%%%%%%%%%%%%%%%%%%%%%%%%%%%%%%%%%%%%%%%%%%%%%%%%%%%%%
%%%%%%%%%%%%%%%%%%%%%%%%%%%%%%%%%%%%%%%%%%%%%%%%%%%%%%%%%%%%%%%%%%%%%%%%
%%%%%%%%%%%%%%%%%%%%%%%%%%%%%%%%%%%%%%%%%%%%%%%%%%%%%%%%%%%%%%%%%%%%%%%%
%%%%%%%%%%%%%%%%%%%%%%%%%%%%%%%%%%%%%%%%%%%%%%%%%%%%%%%%%%%%%%%%%%%%%%%%
%%%%%%%%%%%%%%%%%%%%%%%%%%%%%%%%%%%%%%%%%%%%%%%%%%%%%%%%%%%%%%%%%%%%%%%%
%%%%%%%%%%%%%%%%%%%%%%%%%%%%%%%%%%%%%%%%%%%%%%%%%%%%%%%%%%%%%%%%%%%%%%%%
%%%%%%%%%%%%%%%%%%%%%%%%%%%%%%%%%%%%%%%%%%%%%%%%%%%%%%%%%%%%%%%%%%%%%%%%

\section{Complexity of coincidences}\label{s:perm}

\subsection{Parsimonious reductions} \label{ss:perm-slab}
Let \ts $f\in \SP$ \ts be a counting function.  As in the introduction, denote by
$$\text{\sc C}_f \ := \ \big\{\ts f(x) \. = ^? \ts f(y) \ts \big\}
$$
the \defn{coincidence problem} \ts for~$f$ \ts (cf.~\ref{ss:finrem-term}). This problem naturally
belongs to the complexity class
\begin{equation}\label{eq:CEP}
\CEP:=\big\{\ts f(x) \. =^? \ts g(y) \ \text{where} \ f, \. g \ts \in \ts \SP \ts \big\}.
\end{equation}
Note that \ts $\coNP \subseteq \CEP$, by definition.
The \ts {\sc \#3SAT} \ts coincidence problem \. {\sc C}$_{\text{\sc \#3SAT}}$ \. is a complete
problem in this class.

Function \ts $f$ \ts is said to have a \defn{parsimonious reduction} \ts to~$g$,
if there is an injection \ts $\rho: x\to y$ \ts from the instances of~$f$ to the instances of~$g$,
which maps values of the functions: \. $f(x)=g(y)$, and such that $\rho$ can be computed
in polynomial time. If \ts {\sc \#3SAT} \ts has a
parsimonious reduction to~$f$, then the coincidence problem \ts {\sc C$_f$} \ts is
\ts $\coNP$-hard because the decision problem \ts {\sc 3SAT} \ts is \ts $\NP$-complete.
Moreover, \ts {\sc C$_f$} \ts is \ts $\CEP$-complete by the definition of \ts $\CEP$.
In general, there are other ways for a problem to be
 \ts $\CEP$-complete, but having a parsimonious reduction is the most straightforward.

\smallskip

\begin{prop} [{\rm cf.\ Theorem~\ref{t:main-first}, part~{\small $(0)$}}{}]
\label{p:pars}
Let \ts $f\in \SP$ \ts be a function with a parsimonious reduction from \ts \text{\sc \#3SAT}.
Then the coincidence problem \ts {\sc C}$_f$ \ts is not in \ts $\PH$, unless \ts $\PH=\Sigmap_m$
\ts for some $m$.
\end{prop}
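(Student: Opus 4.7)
The plan is to establish $\CEP$-hardness of $\text{\sc C}_f$ and then invoke Toda's theorem to force a collapse of $\PH$.

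First, I would observe that a parsimonious reduction \ts $\rho$ \ts from \ts {\sc \#3SAT} \ts to \ts $f$ \ts yields a polynomial-time many-one reduction from \ts $\text{\sc C}_{\text{\sc \#3SAT}}$ \ts to \ts $\text{\sc C}_f$ \ts by sending \ts $(\phi,\phi')\mapsto(\rho(\phi),\rho(\phi'))$: parsimony preserves values, so the equality of counts transfers. Since \ts $\text{\sc C}_{\text{\sc \#3SAT}}$ \ts is $\CEP$-complete (noted in Section~\ref{s:def}; this follows because every \ts $g\in\SP$ \ts admits a parsimonious reduction to {\sc \#3SAT} via the usual Cook--Levin construction), we conclude that \ts $\text{\sc C}_f$ \ts is $\CEP$-hard under polynomial-time many-one reductions.

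Next, I would assume \ts $\text{\sc C}_f\in\PH$ \ts for contradiction. Then \ts $\text{\sc C}_f\in\Sigmap_k$ \ts for some fixed~$k$, and hence \ts $\CEP\subseteq\Sigmap_k$. Now I would combine three standard facts: Toda's theorem \ts $\PH\subseteq\P^{\PP}$; the inclusion \ts $\PP\subseteq\NP^{\CEP}$, which holds because \ts $f(x)\ge t$ \ts can be witnessed by guessing an exact value \ts $m\ge t$ \ts and testing \ts $f(x)=^?m$ \ts against the $\CEP$ oracle; and the closure of polynomial-time under oracle composition. Chaining these,
\[
\PH \ \subseteq \ \P^{\PP} \ \subseteq \ \P^{\NP^{\CEP}} \ \subseteq \ \P^{\NP^{\Sigmap_k}} \ \subseteq \ \Sigmap_{k+2},
\]
so $\PH$ collapses to a finite level, as required.

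The main obstacle, and the reason the collapse sits a couple of levels above~$k$ rather than at~$k$ itself, is that $\CEP$ only supports exact-equality queries \ts $f(x)=^?g(y)$, not threshold queries \ts $f(x)\le^?m$. Promoting the former to the latter introduces an extra existential quantifier (the guessed value of~$f(x)$), which propagates through the chain of oracle simulations. Rather than reprove the well-known counting-complexity machinery behind Toda's theorem and the containment \ts $\PP\subseteq\NP^{\CEP}$, I would cite the surveys \cite{For,Scho} already referenced in Section~\ref{s:def} and treat these inclusions as a black box.
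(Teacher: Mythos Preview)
Your argument is correct and reaches the same conclusion, but it follows a different path than the paper. The paper invokes the single inclusion \ts $\PH \subseteq \NP^{\CEP}$ \ts due to Tarui~\cite{Tar91} (see also~\cite{Gre93}), and then chains
\[
\PH \,\subseteq\, \NP^{\CEP} \,\subseteq\, \NP^{\<\text{\sc C}_{\text{\sc \#3SAT}}\>} \,\subseteq\, \NP^{\<\text{\sc C}_f\>} \,\subseteq\, \NP^{\Sigmap_n} \,\subseteq\, \Sigmap_{n+1}\,,
\]
obtaining a collapse to level \ts $n+1$. You instead route through Toda's theorem \ts $\PH\subseteq \P^{\PP}$ \ts together with the easy observation \ts $\PP\subseteq \NP^{\CEP}$ \ts (guess the exact count and verify equality), which yields a collapse to \ts $\Sigmap_{k+2}$. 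Your approach trades one black-box citation (Tarui) for another (Toda) plus a short argument, at the cost of one extra level in the hierarchy; since the proposition only asserts collapse to \emph{some} finite level, this is immaterial. A minor quibble: the $\CEP$-completeness of \ts $\text{\sc C}_{\text{\sc \#3SAT}}$ \ts is stated just before the proposition in~$\S$\ref{ss:perm-slab}, not in Section~\ref{s:def}.
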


\begin{proof}
Suppose that \ts {\sc C}$_f \in \PH$.   Then \ts {\sc C}$_f \in \ts \Sigmap_n$ \ts
for some~$n$.  We then have:
\begin{equation}\label{eq:incl-collapse-CEP}
\PH \. \subseteq \. \NP^{\CEP} \. \subseteq \.
\NP^{\<\text{\sc C}_{\text{\sc \#3SAT}}\>} \. \subseteq \.
\NP^{\<\text{{\sc C}}_f\>} \. \subseteq \.
\NP^{\Sigmap_n} \. \subseteq \. \Sigmap_{n+1}\,,
\end{equation}
where the first inclusion is by Tarui \cite{Tar91} (see also \cite{Gre93}),
the second inclusion follows since \. {\sc C}$_{\text{\sc \#3SAT}}$ \. is a complete
problem in~$\ts \CEP$, the third follows from the parsimonious reduction of \ts
\text{\sc \#3SAT} \ts to~$f$,
and the last inclusion is by definition.  This proves the
desired collapse of the polynomial hierarchy.
\end{proof}

\smallskip
\begin{proof}[Proof of Theorem~\ref{t:slab-main}]
The $\NP$-completeness of \ts {\sc SlabTileability} \ts is proved in~\cite{PY}
by a bijective reduction from \ts {\sc 1-in-3 SAT}.  Since \emph{Schaefer's reduction} \ts
of \ts {\sc 1-in-3 SAT} \ts from \ts {\sc 3SAT} \ts is also by a bijection
\cite{Schaefer} (see also \cite[Problem~LO4]{GJ79}), we conclude that the
number of slab tilings has a parsimonious reduction from \ts {\sc \#3SAT}.
The result now follows from Proposition~\ref{p:pars}.
\end{proof}

\smallskip

\begin{proof}[Proof of Theorem~\ref{t:main-first}, parts~{\small $(1)$, $(2)$}]
These follow immediately from Proposition~\ref{p:pars} and
parsimonious reductions from \text{\sc \#3SAT} \ts given in~\cite{GJ79}.
\end{proof}
\smallskip

\subsection{Pattern containment} \label{ss:perm-pattern}
Let \ts $\pi \in S_k$ \ts and \ts $\si \in S_n$.  We say that \ts
$\si$ \ts  \defn{contains} \ts $\pi$ \ts if there is \ts
$A=\{a_1< \ldots < a_k\} \ssu [n]$, such that \ts $\si(a_1),\ldots,\si(a_k)$ \ts
has the same relative order as~$\ts \pi$.  Denote by \ts $\pc_\pi(\si)$ \ts
the number of subsets~$A$ as above. This counting functions is well studied
in various settings, see e.g.\ \cite{Kit,Vat}.

Define the \defn{pattern containment coincidence problem}:
$$\text{\sc C}_{\text{\sc PC}} \,= \,  \big \{\ts \pc_\pi(\si) \. =^? \ts \pc_{\pi'}(\si')\ts \big\}.
$$

\begin{proof}[Proof of Theorem~\ref{t:main-first}, part~{\small $(3)$}]
It was shown in~\cite[Cor.~4]{BBL}, that computing the pattern containment
function \ts $\pc_\pi(\si)$ \ts
is $\ts\SP$-complete, and the proof uses a parsimonious reduction from
\ts {\sc \#3SAT}.  Now the theorem follows from Proposition~\ref{p:pars}.
\end{proof}

\smallskip

\subsection{Kronecker coefficients} \label{ss:perm-Kron}
Consider the following application of the tools above to
algebraic combinatorics.
% (see \cite{Sta-EC} for some background).
Let \ts $g(\la,\mu,\nu)$, where $\la,\mu,\nu\vdash n$,
denote the \defn{Kronecker coefficients}:
$$
g(\la,\mu,\nu) \, := \, \<\chi^\la\chi^\mu,\chi^\nu\> \, = \, \frac{1}{n!} \.
\sum_{\si \in S_n} \. \chi^\la(\si) \. \chi^\mu(\si)\. \chi^\nu(\si)\.,
$$
where \ts $\chi^\la$ \ts denote the irreducible $S_n$ character corresponding
to~$\la\vdash n$.  By definition, \ts $g(\la,\mu,\nu)\in \nn$ \ts and is symmetric
with respect to permutations of \ts $\la,\mu,\nu$. It is known that
\ts $g$ \ts is in \ts $\GapP := \SP-\SP$, i.e.\ can be written as the difference of two
functions in~$\SP$.  It is a major open problem whether \ts $g$ \ts is in~$\ts \SP$.
Here and below we are assuming that partitions are given in \emph{unary}.
We refer to recent surveys \cite{Pak-OPAC,Panova} for further background.

Define the \defn{Kronecker coefficients coincidence problem}:
$$\text{\sc C}_{\text{\sc KRON}} \,= \,  \big \{\ts g(\la,\mu,\nu) \. =^? \ts g(\al,\be,\ga)\ts \big\}.
$$

\begin{proof}[Proof of Theorem~\ref{t:main-first}, part~{\small $(4)$}]
It was shown in~\cite{IMW17} that the \emph{Kronecker vanishing problem} \ts
$\{g(\la,\mu,\nu) = ^? 0\}$ \ts is \ts $\coNP$-hard.  This proves the first part.
For the second part, recall that computing \ts $g$ \ts is \ts $\SP$-hard
(in the unary) \cite{IMW17}, and that the original proof  gives an explicit
(although rather involved) parsimonious reduction from \ts {\sc \#3SAT} \ts
to~$\ts g$.  Now the theorem follows from Proposition~\ref{p:pars}.
\end{proof}

\smallskip

\subsection{The permanent} \label{ss:perm-gen}
We start with a more general problem which will be used to
demonstrate our approach.  Recall that the \ts {\sc 0/1 Permanent} \ts
is the benchmark $\SP$-complete problem, which corresponds to counting
perfect matchings in simple bipartite graphs.

Let \ts {\sc \PerC} \ts denote the \ts \defn{0/1 Permanent
Coincidence problem}:
$$
\text{\sc \PerC} \ := \
\big\{ \per(M) \, =^? \, \per(M') \. \mid \. M, \ts M' \in \cM_n \big\},
$$
where \ts $\cM_n$ \ts is the set of $n \times n$ \ts matrices with
entries in $\{0,1\}$.

\smallskip

\begin{thm}[{\rm = Theorem~\ref{t:main-combined}, part~{\small $(0)$}}{}]
\label{t:perm}
\ts {\sc \PerC} $\notin\PH$ \ts unless \ts $\PH =\Sigmap_m$ \ts
for some~$\ts m$.
\end{thm}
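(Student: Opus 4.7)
The plan is to polynomially reduce the $\CEP$-complete problem $\mathrm{C}_{\#\text{3SAT}}$ to $\mathrm{C}_{\text{PER}}$, and then rerun the oracle chain from the proof of Proposition~\ref{p:pars}. The chief obstacle is that Valiant's classical reduction from $\#$3SAT to the permanent is \emph{not} parsimonious: a 3SAT formula $\phi$ with $m$ clauses is mapped to a $0/1$ matrix $M_\phi$ with $\per(M_\phi)=4^m\, s(\phi)$, where $s(\phi)$ counts satisfying assignments. The factor $4^m$ must therefore be neutralized.

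First, I would fix a padding gadget $H_a$ with $\per(H_a)=4^a$: take the block-diagonal $0/1$ matrix built from $2a$ copies of $\bigl(\begin{smallmatrix}1&1\\1&1\end{smallmatrix}\bigr)$, the biadjacency matrix of $K_{2,2}$. Each block has permanent $2$, so multiplicativity of the permanent under direct sums gives $\per(H_a)=2^{2a}=4^a$. (Alternatively, the Brualdi--Newman construction recalled in Remark~\ref{r:Nadeau} supplies any desired target value.)

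Given an instance $(\phi_1,\phi_2)$ of $\mathrm{C}_{\#\text{3SAT}}$ with clause counts $m_1,m_2$, I form the block-diagonal $0/1$ matrices
\[
M'_i \ := \ M_{\phi_i}\oplus H_{m_{3-i}}\oplus I_{N_i} \qquad (i=1,2),
\]
where $I_{N_i}$ is an identity block chosen so that $M'_1$ and $M'_2$ are square of the same dimension. Multiplicativity yields $\per(M'_i)=4^{m_1+m_2}\, s(\phi_i)$, whence $\per(M'_1)=\per(M'_2)$ iff $s(\phi_1)=s(\phi_2)$. This is the desired polynomial-time many-one reduction $\mathrm{C}_{\#\text{3SAT}}\to\mathrm{C}_{\text{PER}}$.

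To close, I would run the chain~\eqref{eq:incl-collapse-CEP} verbatim: assuming $\mathrm{C}_{\text{PER}}\in\Sigmap_n$,
\[
\PH \,\subseteq\, \NP^{\CEP} \,\subseteq\, \NP^{\langle \mathrm{C}_{\#\text{3SAT}}\rangle} \,\subseteq\, \NP^{\langle \mathrm{C}_{\text{PER}}\rangle} \,\subseteq\, \NP^{\Sigmap_n} \,\subseteq\, \Sigmap_{n+1},
\]
the third inclusion now justified by the many-one reduction above (any polynomial-time reduction from $\mathrm{C}_{\#\text{3SAT}}$ to $\mathrm{C}_{\text{PER}}$ embeds $\langle \mathrm{C}_{\#\text{3SAT}}\rangle$ into $\langle \mathrm{C}_{\text{PER}}\rangle$). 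This yields the claimed collapse.
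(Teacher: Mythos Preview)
Your argument is sound and takes a genuinely different route from the paper. The paper does \emph{not} reduce $\text{C}_{\#\text{3SAT}}$ to $\text{C}_{\text{PER}}$. Instead it passes through the \emph{verification} problem $\text{V}_{\text{PER}}=\{\per(M)=^?k\}$ and uses Toda's theorem rather than Tarui's: it argues $\PH\subseteq\P^{\SP}\subseteq\NP^{\langle\text{V}_{\text{PER}}\rangle}\subseteq\NP^{\langle\text{C}_{\text{PER}}\rangle}$, where the last inclusion is obtained by \emph{constructing} a $0/1$ matrix with permanent exactly~$k$ (via Theorem~\ref{t:domino} or the Brualdi--Newman construction). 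Your approach sidesteps this ``conciseness'' step entirely; you only need to realize the single value $4^{m_1+m_2}$ as a permanent, which is trivial. Conversely, the paper's route is the template that drives the rest of Section~\ref{s:var}: once one knows that a $\SP$-complete function is concise (or recognizable), the verification-problem argument runs verbatim, whereas your padding trick is specific to reductions with a clean multiplicative blow-up.

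One caveat: the sentence ``Valiant's classical reduction \dots\ maps $\phi$ to a $0/1$ matrix $M_\phi$ with $\per(M_\phi)=4^m s(\phi)$'' is not quite accurate as stated. Valiant's original route to $0/1$ matrices goes through modular arithmetic and does not preserve a multiplicative relationship; the $4^m$-type factor appears in the intermediate \emph{integer}-matrix stage (with negative entries). For your argument you should either cite a reduction that lands directly on nonnegative entries with a clean factor (e.g.\ Ben-Dor--Halevi, which gives $\per(M_\phi)=12^m s(\phi)$ over $\nn$ and then a further computable factor when passing to $0/1$), or simply invoke Zank\'o's parsimonious reduction from $\#$3SAT to $0/1$ permanent, in which case your padding becomes unnecessary and Proposition~\ref{p:pars} applies directly. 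Either fix is routine and does not affect the structure of your proof.
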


\begin{proof}
Let \ts {\sc \PerV} \ts denote the \ts {\em 0/1 Permanent
Verification problem}:
$$
\text{\sc \PerV} \ := \
\big\{ \per(M) \, =^? \, k \big\},
$$
where \ts $M$ \ts is a 0/1 matrix and \ts $k\in \nn$ \ts is given in binary.  We have:
\begin{equation}\label{eq:incl}
\ComCla{PH} \. \subseteq \. \P^{\SP} \. \subseteq \. \NP^{\<\text{\sc \PerV}\>}
\. \subseteq \. \NP^{\<\text{\sc \PerC}\>}.
\end{equation}
The first inclusion is Toda's theorem \cite{Toda}.  The second inclusion
follows because \ts {\sc 0/1 PERMANENT} \ts is \ts $\SP$-complete \cite{V1,V3}.
Indeed, transform every query to a $\SP$ oracle to a 0/1 permanent instance, guess the
value~$k$ of that permanent, then call \ts {\sc \PerV} \ts to check that this
guess is correct.\footnote{Here we are using Ryan Williams's approach in \ts \href{https://cstheory.stackexchange.com/a/53024/16704}{cstheory.stackexchange.com/a/53024/}}

For the third inclusion in~\eqref{eq:incl}, use Theorem~\ref{t:domino}
to construct a region \ts $\Ga\ssu \zz^2$ \ts of size \ts $|\Ga|=O(\log k)^c$ \ts
and with exactly~$k$ domino tilings.  The dual bipartite graph $G$ then has
exactly~$k$ perfect matchings.\footnote{We can also use the Brualdi--Newman construction
here (see Remark~\ref{r:Nadeau}), instead of Theorem~\ref{t:domino}. }  This corresponds to an instance of the
 0/1 permanent equal to~$k$.  Now call \ts {\sc \PerC} \ts to simulate~{\sc \ts \PerV}.

Finally, suppose \ts {\sc \PerC} $\in \PH$.  Then \ts {\sc \PerC} $\in \ts \Sigmap_n$ \ts
for some~$n$.  By \eqref{eq:incl}, this implies:
\begin{equation}\label{eq:incl-collapse}
\PH \. \subseteq \. \NP^{\<\text{\sc \PerC}\>} \. \subseteq \.  \NP^{\Sigmap_n} \. \subseteq \. \Sigmap_{n+1}\,,
\end{equation}
as desired.
\end{proof}

\smallskip

\begin{proof}[Proof of Theorem~\ref{t:brick-main}]
Recall that the problem of counting \ts $2\times 1 \times 1$ \ts brick
tilings is $\SP$-complete \cite[Thm~1.3]{PY}, see also~\cite{V2}.
Now, note that the \ts {\sc C$_{\text{BT}}$} \ts restricted to the case
of regions in~$\zz^2$ still satisfies conclusions of Theorem~\ref{t:domino}.
From this point, the proof follows verbatim the proof of Theorem~\ref{t:perm}.
\end{proof}

\smallskip

\begin{rem}
The proof in~\cite{PY} is via a parsimonious reduction from the number
of perfect matchings in $3$-regular bipartite graphs, which in turn
is proved \ts $\SP$-complete via a non-parsimonious reduction from the \ts
{\sc 0/1 PERMANENT} \ts \cite[Thm~6.2]{DL92}.  This is why we cannot proceed
by analogy with the proof of Theorem~\ref{t:slab-main}.
\end{rem}

% \smallskip
%
% \begin{rem} \label{r:CEP}
% We conjecture that both \ts {\sc \PerC} \ts and \ts {\sc \PerV} \ts are \ts $\CEP$-complete.
% This would show that Theorem~\ref{t:perm} follows from the proof of Proposition~\ref{p:pars}.
% Unfortunately, there seem to be no known approach to this problem.
% \end{rem}

\medskip

\section{Variations on the theme}\label{s:var}

In this section we use the tools that we developed to solve other coincidence problems.

\subsection{Complete functions}\label{ss:var-complete}
Let \ts $\cX = \sqcup \ts \cX_n$, where \ts $\cX_n\subseteq \{0,1\}^{\ga(n)}$ \ts be a
set of \defn{combinatorial objects}, which means that the input size \ts $\ga(n)$ \ts
is polynomial in~$n$.  A counting function \ts $f\in \SP$ \ts can be viewed as a
function \ts $f: \cX\to \nn$.  For example, let \ts $\cX$ \ts be the set of simple
graphs \ts $G=(V,E)$ \ts where \ts $n=|V|$, and let \ts $f(G)$ \ts be the number of
Hamiltonian cycles in~$G$.

Denote
$$\cT_f(n) \, := \, \{f(x) \. \mid \. x\in \cX_n\} \quad \text{and} \quad
\cT_f\, := \,  \{f(x) \. \mid \. x\in \cX\} % \, = \, \cup_{n\in \nn} \. \cT_f(n)
$$
the set of all values of the function~$f$.  We say that \ts $f$ \ts is \defn{complete} \ts
if \. $\cT_f = \nn$.  We say that \ts $f$ \ts is \defn{almost complete} \ts
if \. $\cT_f$ \. contain all but a finite number of integers \ts $k\in \nn$.

For example, by Theorem~\ref{t:domino}, the number of domino tilings in the
plane is a complete function.  Trivial examples of complete functions include
the number of $3$-cycles in a simple graphs, or the number \ts $\inv(\si)$ \ts
of inversions in a permutation~$\si$.  Similarly, the number of spanning trees
in a simple graph is in \ts $\{0,1,3,4,\ldots\}$, and thus almost complete.

\smallskip

\begin{ex}[{\rm Linear extensions}{}] \label{ex:indep-LE}
Let \ts $P=(X,\prec)$ \ts be a finite poset with \ts $n=|X|$ \ts elements.
A \defn{linear extension} of $P$ is a bijection \ts $\rho: X \to [n]$,
such that \ts $\rho(x) < \rho(y)$ \ts for all \ts $x \prec y$.
Denote by \ts $e(P)$ \ts the number of linear extensions of~$P$.
We use \ts {\sc \#LE} \ts to denote the problem of computing \ts $e(P)$.
It is known that \ts {\sc \#LE} \ts is \ts $\SP$-complete \cite{BW91}.

Clearly, \ts $e(P)\ge 1$ \ts for all~$P$.  Note that the set of numbers
of linear extensions \ts $\cT_e = \{1,2,3,\ldots\}$, since \ts $e(C_k + C_1)=k$,
where we take a parallel sum of an element and a chain of length~$k$.
In particular, the function~$e$ is almost complete.
\end{ex}

\smallskip

\begin{ex}[{\rm Symmetric Kronecker coefficients}{}] \label{ex:sym-Kron}
For a slightly nontrivial example, consider the
\defn{symmetric Kronecker coefficient} \.
$g_s(\la) :=g(\la,\la,\la)$,  see \cite{PP22}.
Note that function \ts $g_s: \{\la\} \to \nn$ \ts is complete:
$$
g_s(1^2) \ts = \ts 0\ts, \quad g_s(1)\ts =\ts 1 \quad \text{and} \quad
g_s(4k, 2k) \. = \. k+1 \ \ \text{for all} \ \ k\ge 1,
$$
where \ts $(4k,2k)\vdash 6k=n$, see e.g.\ \cite{Ste}.
\end{ex}

\smallskip

\begin{ex}[{\rm Littlewood--Richardson coefficients}{}]\label{ex:LR-complete}
The \ts \defn{Littlewood--Richardson {\rm $($LR$)$} coefficients} \ts $c^\la_{\mu\nu}$
\ts can be defined as structure constants for the ring of \defng{Schur functions}:
$$
s_\mu \ts \cdot \ts s_\nu \ = \ \sum_{\la} \, c^\la_{\mu\nu} \. s_\la \.,
$$
see e.g.\ \cite[Ch.~7]{Sta-EC}.  It remains open whether computing \.
$c^\la_{\mu\nu}$ \. is $\ts \SP$-complete (in unary), see an extensive discussion in
\cite{Pak-OPAC} and \cite{Panova}.  We note that this is a complete function:
$$
c^{3}_{1, 1^2} \ts = \ts 0, \quad
c^{2}_{1, 1} \ts = \ts 1 \quad \text{and} \quad
c^{k(321)}_{k(21),\ts k(21)} \. = \. k+1 \ \ \text{for all} \ \ k\ge 1,
$$
where the last equality follows form \. $c^{321}_{21,21}=2$ \. combined with  \cite[Rem.~5.2]{Rassart}.
\end{ex}

\smallskip

\begin{ex}[{\rm Contingency tables}{}]\label{ex:CT-complete}
Let \ts $\ba = (a_1,\ldots,a_r)\in \nn^r$ \ts and \ts $\bb = (b_1,\ldots,b_s)\in \nn^s$.
Denote by \ts $\CT(\ba,\bb)$ \ts the
number of \defn{contingency tables} \ts $M=(x_{ij})\in \nn^{rs}$, defined by
$$\sum_{j=1}^{s} \ts x_{ij} \. = \. a_i \ \ \text{for all \ $i$}\ts,
\quad \sum_{i=1}^{r} \ts x_{ij} \. = \. b_j \ \ \text{for all \ $j$}\ts, \quad \text{and}
\quad  x_{ij}\ts \ge\ts 0 \ \  \text{for all  \ $i,j$}\ts.
$$
Note that \ts $\CT(\ba,\bb)\ge 1$ \ts for all \ts $|\ba| = |\bb|$\ts.

Computing the number \ts $\CT(\ba,\bb)$ \ts of contingency tables (with the input in unary)
is conjectured to be \ts $\SP$-complete \cite[$\S$13.4.1]{Pak-OPAC}. On the other hand,
\ts $\CT(\cdot)$ \ts is clearly a complete function, since e.g.\
\ts $\CT(\ba,\ba) = k+1$, for \ts $m=n=2$ \ts and
\ts $\ba=(k,k)$.  Using standard reductions this also implies that the
\defng{Kostka number} \ts is also a complete function, see e.g.\ \cite{PV}.
\end{ex}

\smallskip

\begin{ex}[{\rm Pattern containment}{}]\label{ex:var-pattern-complete}
Let \ts $\si \in S_n$.
Clearly, the pattern containment function \ts $\pc_{21}(\si)=\inv(\si)$ \ts is
complete.  The following result is a generalization:

\begin{prop}\label{p:pattern-complete}
Fix \ts $\pi\in S_k$\ts, where \ts $k\ge 2$. Then function \ts $\pc_{\pi}(\si)$ \ts is
complete.
\end{prop}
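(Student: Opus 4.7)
The plan is to construct, for each $m \geq 0$, an explicit $\sigma$ with $\pc_\pi(\sigma) = m$ by taking an iterated direct sum or skew sum of copies of $\pi$. Recall that the \emph{direct sum} $\sigma \oplus \tau$ of $\sigma \in S_a$, $\tau \in S_b$ is the permutation in $S_{a+b}$ obtained by placing $\tau$ above and to the right of $\sigma$, and the \emph{skew sum} $\sigma \ominus \tau$ by placing $\sigma$ above and to the left of $\tau$. Call $\pi$ \emph{sum-indecomposable} if $\pi \ne \alpha \oplus \beta$ for any nonempty $\alpha, \beta$, and \emph{skew-indecomposable} analogously.

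The main combinatorial input is the dichotomy: every $\pi \in S_k$ with $k \geq 2$ is sum-indecomposable or skew-indecomposable (possibly both). I would prove this by contradiction. Suppose $\pi = \alpha \oplus \beta = \gamma \ominus \delta$ with $i := |\alpha|$, $j := |\gamma|$, and all four parts nonempty. The sum decomposition forces $\{\pi_1, \ldots, \pi_i\} = \{1, \ldots, i\}$ and $\{\pi_{i+1}, \ldots, \pi_k\} = \{i+1, \ldots, k\}$, while the skew decomposition forces $\{\pi_1, \ldots, \pi_j\} = \{k-j+1, \ldots, k\}$ and $\{\pi_{j+1}, \ldots, \pi_k\} = \{1, \ldots, k-j\}$. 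Splitting on whether $j < i$, $j = i$, or $j > i$ and reading the resulting constraints at the boundary positions $\pi_j, \pi_{j+1}, \pi_i, \pi_{i+1}$ yields a contradiction in each case (for example, when $j < i$, the constraints at $\pi_j$ and $\pi_{i+1}$ force $k \leq i+j-1$ and $k \geq i+j+1$ simultaneously; the case $j = i$ collapses the two sets $\{1,\ldots,i\}$ and $\{k-i+1,\ldots,k\}$, forcing $k=i$, which contradicts $|\beta| \geq 1$).

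The second ingredient is the additivity lemma: if $\pi$ is sum-indecomposable, then
\[
\pc_\pi(\sigma \oplus \tau) \, = \, \pc_\pi(\sigma) \, + \, \pc_\pi(\tau) \qquad \text{for all \ } \sigma, \tau,
\]
because any occurrence of $\pi$ in $\sigma \oplus \tau$ that uses positions from both halves would exhibit $\pi$ as a proper direct sum. The analogous identity holds for $\ominus$ when $\pi$ is skew-indecomposable.

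Finally, the construction: for $m = 0$, take $\sigma = 1 \in S_1$, which contains no $\pi$-pattern since $k \geq 2$. For $m \geq 1$, in the sum-indecomposable case take $\sigma_m$ to be the direct sum of $m$ copies of $\pi$; in the skew-indecomposable case take the iterated skew sum instead. Since $\pc_\pi(\pi) = 1$, the additivity identity gives $\pc_\pi(\sigma_m) = m$, and $\sigma_m$ is produced in time polynomial in $m$. The only non-routine step is the dichotomy; once that is in hand, the rest of the argument is immediate.
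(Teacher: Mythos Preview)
Your proof is correct, and it takes a genuinely different route from the paper's.

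The paper's argument runs as follows: assume without loss of generality that $\pi$ begins with an ascent (otherwise pass to the complement). Build $\sigma$ by laying down a decreasing run $m,m-1,\ldots,\pi(1)$ and then appending $\pi(2),\ldots,\pi(k)$, with the values exceeding $\pi(1)$ shifted above~$m$. Every occurrence of $\pi$ in this $\sigma$ must use exactly one entry of the decreasing prefix in the role of $\pi(1)$ and must use the entire appended block for $\pi(2),\ldots,\pi(k)$; counting the admissible prefix entries gives $\pc_\pi(\sigma)=n-k+1$, so varying $n$ hits every positive integer.

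Your argument instead exploits the structural dichotomy that no $\pi\in S_k$ with $k\ge 2$ is simultaneously sum-decomposable and skew-decomposable, and then uses the additivity $\pc_\pi(\sigma\oplus\tau)=\pc_\pi(\sigma)+\pc_\pi(\tau)$ (resp.\ with $\ominus$) for sum-indecomposable (resp.\ skew-indecomposable) $\pi$ to realize each $m$ as an $m$-fold sum of copies of $\pi$. The dichotomy proof and the additivity lemma are both standard and your sketches are accurate; in fact the dichotomy can be done in one line by observing that if $\{\pi_1,\ldots,\pi_i\}=\{1,\ldots,i\}$ and $\{\pi_1,\ldots,\pi_j\}=\{k-j+1,\ldots,k\}$ then containment in either direction forces $i\ge k$ or $j\ge k$.

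As for trade-offs: the paper's construction produces $\sigma\in S_{m+k-1}$ with $\pc_\pi(\sigma)=m$, whereas yours produces $\sigma\in S_{mk}$; both are polynomial in $m$ for fixed $\pi$, so this is immaterial for completeness. Your approach is more modular and reuses familiar permutation-pattern machinery, while the paper's avoids the indecomposability lemma by reducing to the trivial fact that $\pi$ starts with an ascent or a descent. Either is perfectly acceptable here.
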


\begin{proof}
Without loss of generality, assume that \ts $\pi$ \ts starts with an ascent.
Let \ts $\si$ \ts be a decreasing sequence \ts $m,m-1,\ldots,\pi(1)=a$. Append
it with \ts $\pi(2),\ldots,\pi(k)$ \ts which are shifted above \ts $m$ \ts if
they are strictly greater than $a$.  Suppose exactly \ts $r$ \ts elements are shifted.
Let \ts $m=n-r$ \ts be so that the resulting $\ts \si$ \ts is in  \ts $S_n$\ts,
and observe that \ts $\pc_\pi(\si) = n-r-a+1$.  This implies the result.

For example, let \ts $\pi=(2,5,1,3,6,4)$.  The construction above gives:
$$
\pc_{251364} \ts (n-4,n-5,\ldots,\ts 3,\ts 2,n-1,\ts 1,n-3,n,n-2) \. = \. n-5,
$$
where \ts $k=6$, \ts $a=2$, \ts $r=4$, and \ts $m=n-4$.
\end{proof}
\end{ex}

\smallskip

\subsection{Concise functions}\label{ss:var-concise}
Note that being complete is neither necessary nor sufficient for our
approach to the complexity of the coincidence problems. The following
purely combinatorial definition gets us closer to the goal.
\smallskip

\begin{Def} \label{d:concise}
Function \ts $f:\cX\to \nn$ \ts is called \defn{concise} \ts if there exist
some fixed constants \ts $C, c>0$, such that for all \ts
$k \in \cT_f$ \ts there is an element \ts $x\in \cX_n$ \ts with \ts
$f(x)=k$ \ts and \ts $n < C \ts (\log k)^c$.
\end{Def}

\smallskip

Our Theorem~\ref{t:domino} shows that number \ts $\tau(\Ga)$ \ts
of domino tilings of regions \ts $\Ga \ssu \zz^2$ \ts is concise.
On the other hand, the numbers of patterns \ts $\pc_\pi(\si) \le \binom{n}{k}$ \ts
for every \ts $\pi \in S_k$ \ts and \ts $\si\in S_n$
(see Example~\ref{ex:var-pattern-complete}), so the function
\ts $\pc_\pi$ \ts is not concise for a fixed~$\pi$.
We now present several less obvious examples of concise functions.

% \com{SH}{I proofread up to here.}

\smallskip

\begin{ex}[{\rm Independent sets}{}] \label{ex:indep-sets}
Let \ts $G=(V,E)$ \ts be a finite simple graph, and let \ts
$\la(G)$ \ts be the number of \defn{independent sets} in~$G$,
i.e.\ subsets \ts $X\subseteq V$ \ts such that $X$ contains
no two adjacent vertices.  Recall that computing \ts $\la$ \ts is \ts
$\SP$-complete, see \cite{PB83}.   Moreover, this holds even
for planar bipartite graphs \cite{Vad}.

We now show that \ts $\la$ \ts is concise.
Denote by $G'$ the graph obtained from~$G$
by adding a new vertex $w$ and adding all edges from~$w$ to $V$.
Similarly, denote by $G''$ the graph obtained from~$G$
by adding a new vertex $w$ disconnected from~$V$.
Observe that \.
$\la(G')=\la(G)+1$ \. and \. $\la(G'')=2\la(G)$.  Iterating these
two operations we obtain the desired graph $G$ on $n$~vertices,
with \ts $\la(G)=k$ \ts and \ts $n = O(\log k)$.
\end{ex}

%\smallskip

\begin{ex}[{\rm Order ideals}{}] \label{ex:order-ideals}
Let \ts $P=(X,\prec)$ \ts be a finite poset with \ts $n=|X|$ \ts elements.
A subset \ts $Y \subseteq X$ \ts is a \defn{lower order ideal} \ts if
for all \ts $x\prec y$ \ts where \ts $x\in X$ \ts and \ts $y \in Y$,
we also have \ts $x\in Y$.  Denote by \ts $\mu(P)$ \ts the number of
lower order ideals in~$P$.  Recall that computing \ts $\mu$ \ts is \ts
$\SP$-complete, see \cite{PB83}.

We can similarly show that \ts $\mu$ \ts is concise by constructing
posets \ts $P', \ts P''$ \ts  with an extra element~$x$ that is either
smaller than all elements in~$X$, or incomparable to~$X$.  We then have
\. $\mu(P')=\mu(P)+1$, \. $\mu(P'') = 2\mu(P)$.  Iterating these two
operations proves the claim.

\begin{rem}\label{r:order-topologies}
For closely related problems on the smallest \defng{topology} \ts with a
given number of open sets, and the shortest \defng{addition chain} \ts
of a given integer, see \cite{RT10}, \cite[$\S$4.6.3]{Knu98}, and sequences
\cite[\href{http://oeis.org/A137814}{A137814}]{OEIS},
\cite[\href{http://oeis.org/A003064}{A003064}]{OEIS}.
\end{rem}
\end{ex}

\smallskip

\begin{ex}[{\rm Satisfiability}{}] \label{ex:SAT}
Recall that satisfiability decision problem \ts {\sc 2SAT} \ts is in~$\P$,
while the corresponding counting problems \ts {\sc \#MONOTONE 2SAT} \ts
is $\SP$-complete \cite{V1}.  Here \emph{monotone} \ts refers to boolean
formulas which do not have negative variables.

Observe that the number of independent sets \ts $\la(G)$ \ts
has an obvious parsimonious reduction to \ts {\sc \#MONOTONE 2SAT}:
$$G=(V,E) \ \ \longrightarrow \ \ \Phi_G \, := \, \bigwedge_{(v,w)\in E} \. (x_v \vee x_w),
$$
since complements to independent sets \ts $V\sm X$ \ts are in natural
bijection with satisfying assignments of~$\Phi_G$. Since \ts $\la(G)$ \ts
is concise, then so is \ts {\sc \#MONOTONE 2SAT}.  Similarly, \ts {\sc \#3SAT} \ts is also concise,
via the standard reduction:
$$
G=(V,E) \ \ \longrightarrow \ \ \ \Phi_G' \, := \, (z\vee z \vee z) \bigwedge_{(v,w)\in E} (x_v \vee x_w \vee \ov z).
$$
\end{ex}

\smallskip

The approach in this example can be distilled in the following basic observation:

\smallskip

\begin{prop}\label{ss:SAT-reduction}
Suppose a concise counting function \ts $g$ \ts has a parsimonious reduction to~$\ts f$.  Then \ts $f$ \ts is also concise.
\end{prop}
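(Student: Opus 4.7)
The plan is to compose the instance-construction witnessing conciseness of $g$ with the parsimonious reduction to~$f$. Let $C, c > 0$ be the constants witnessing that $g$ is concise, and let $\rho$ denote the parsimonious reduction, computable in polynomial time $p(\cdot)$ and satisfying $f(\rho(x)) = g(x)$ for every instance $x$ of~$g$; in particular, $\cT_g \subseteq \cT_f$.

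Given $k \in \cT_f$ (which we may assume lies in $\cT_g$, since in every intended application $g$ will be complete, so $\cT_g = \nn$), the argument proceeds in two steps. First, invoke conciseness of $g$ to produce an instance $x$ with $g(x) = k$ and $|x| \le C(\log k)^c$. Second, compute $y := \rho(x)$ in polynomial time; the parsimonious property then yields $f(y) = g(x) = k$, while the polynomial bound on $\rho$ gives
$$
|y| \,\le\, p(|x|) \,\le\, p\bigl(C(\log k)^c\bigr) \,\le\, C'(\log k)^{c'}
$$
for suitable constants $C', c' > 0$, since composing a polynomial with a polylogarithm produces a polylogarithm. This is exactly the conciseness condition for~$f$.

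There is no substantive obstacle here; the result is essentially a bookkeeping observation once the definitions are unwound. The only point worth flagging is the implicit requirement that the values realized by $f$ already lie within the range of $g$, so that the first step above can be applied at all. In every concrete reduction used later (for instance, the reduction from $\la(G)$ to \textsc{\#Monotone 2SAT} in Example~\ref{ex:SAT}), the source function $g$ is complete, so this matching-of-ranges is automatic and the proposition applies directly without further comment.
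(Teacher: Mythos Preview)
The paper does not give a proof of this proposition; it is stated as a ``basic observation'' distilling the argument of Example~\ref{ex:SAT}, and your two-step argument (produce a small $g$-instance, then push it through the polynomial-time parsimonious reduction) is precisely what is intended.

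Your caveat about ranges is well taken and is, strictly speaking, a gap in the proposition as literally stated: conciseness of~$f$ demands a small witness for \emph{every} $k\in\cT_f$, whereas the reduction only supplies witnesses for $k\in\cT_g\subseteq\cT_f$. The paper silently relies on the fact that in each application the source function~$g$ is complete (or almost complete), so that $\cT_g\supseteq\cT_f$ up to finitely many values, and your proof makes this assumption explicit. With that hypothesis in place, your argument is correct and complete.
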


\smallskip

\subsection{Further examples} \label{ss:var-more}
We start with the following general notion of exponential growth
which will prove useful in several examples.\footnote{This definition
is somewhat non-standard as we make no distinction between weakly exponential,
exponential, factorial and superexponential growths:
\ts $e^{\sqrt{n}}$, \ts $e^{n}$, \ts $e^{n\log n}$ \ts and \ts $e^{n^2}$.
We refer, e.g., to \cite{Grig} for a more refined treatment of growth functions.}

\smallskip

\begin{Def}\label{d:exp-growth}
We say that a counting function \ts $f:\cX\to \nn$ \ts has \defn{exponential growth} \ts
if there exist \ts  $A,\al>0$, such that \ts $f(x) > A \exp(n^\al)$ \ts for all \ts $x\in \cX_n$\ts.
\end{Def}

\smallskip

For example, the number of independent sets of a bipartite graph~$\ts G \ts $ on $\ts n \ts $ vertices
has exponential growth:  \ts $\la(G) \ge 2^{n/2}$. Note also that every \ts $f \in \SP$ \ts
satisfies the opposite inequality: \ts $f(x) < B \exp(n^\be)$ \ts
for all \ts $x\in \cX_n$ \ts and some \ts  $B,\be>0$.

\smallskip

\begin{prop} \label{p:concise-complete}
Let \ts $\cX = \sqcup \ts \cX_n$ \ts be a set of combinatorial objects.
Suppose function \ts $f:\cX\to \nn$ \ts has exponential growth.
Suppose also that \ts $f$ \ts is almost complete.  Then \ts $f$ \ts is concise.
\end{prop}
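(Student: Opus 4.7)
The plan is to read the exponential growth hypothesis as a direct upper bound on the size of any preimage of $f$, and then use almost completeness only to guarantee that preimages exist. The bound $f(x) > A\exp(n^\alpha)$ for $x\in\cX_n$ can be inverted: any $x\in\cX_n$ with $f(x)=k$ must satisfy
$$
n \ < \ \bigl(\log(k/A)\bigr)^{1/\alpha},
$$
so in particular $n \leq C_1 (\log k)^{1/\alpha}$ for some $C_1$ depending only on $A$ and $\alpha$, valid for all $k$ larger than a threshold. Thus the size of \emph{every} witness is automatically polylogarithmic in $k$; the exponential growth condition does all the real work.

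Next, I would invoke almost completeness to produce witnesses. By hypothesis there is a finite exceptional set $E \subset \nn$ with $\nn \setminus E \subseteq \cT_f$. Choose $k_0$ large enough that $E \subseteq [0,k_0)$ and that the displayed bound applies for all $k\geq k_0$. Then for every $k\in \cT_f$ with $k \geq k_0$, any preimage $x\in \cX_n$ of $k$ satisfies $n < C_1(\log k)^{1/\alpha}$, so the conciseness inequality holds with $c := 1/\alpha$ and $C := C_1$.

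It remains to handle the finitely many $k \in \cT_f \cap [0,k_0)$. For each such $k$ fix an arbitrary witness $x_k \in \cX_{n_k}$; the finitely many sizes $n_k$ form a bounded set, so enlarging the constant $C$ beyond $C_1$ absorbs them into a single uniform bound $n < C(\log k)^{c}$, valid for all $k \in \cT_f$ (taking the trivial case $0\in \cT_f$ for free, since exponential growth with $A>0$ in fact forces $0\notin \cT_f$).

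I do not anticipate a substantive obstacle: the argument is essentially an algebraic inversion of the growth inequality, with almost completeness playing only the auxiliary role of guaranteeing witnesses for all but finitely many values. The only care needed is to verify that the finitely many small values of $k$ with pathological behavior of $\log k$ can be swept into the constant $C$, which is immediate.
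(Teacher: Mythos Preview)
Your proposal is correct and follows essentially the same approach as the paper: invert the growth inequality \(f(x)>A\exp(n^\alpha)\) to bound \(n\) polylogarithmically in \(k\), use almost completeness to guarantee witnesses exist for all large \(k\), and absorb the finitely many remaining values into the constant. Your treatment is in fact slightly more careful than the paper's (which writes \(n < C + (\log(k/A))^{1/\alpha}\) and leaves the passage to the exact form \(C(\log k)^c\) implicit), and your observation that exponential growth forces \(0\notin\cT_f\) is a nice touch the paper omits.
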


\begin{proof}
Since \ts $f$ \ts is almost complete, there exists an integer \ts $N$, s.t.\
for every \ts $k>N$, we have \ts $f(x) = k$ \ts for some \ts $x\in \cX_n$.
Since \ts $f$ \ts has exponential growth, we must have \ts $k > A \exp(n^\al)$.  Let
$$C \, := \, \max \big\{\ts n \. : \. f(x) \le N \ \ \text{for some} \ \ x\in \cX_n \ts \big\}.
$$
Then, \ts
$n < C + \big(\log \frac{k}{A}\big)^{1/\al}$ \ts for all \ts $k$, so
\ts $f$ \ts is concise.
\end{proof}

\smallskip

\begin{ex}[{\rm Linear extensions of restricted posets}{}] \label{ex:LE-restricted}
%
%Let \ts $P=(X,\prec)$ \ts be a finite poset with \ts $n=|X|$ \ts elements.
The \defn{height} \ts and \defn{width} \ts of a \ts $P$ \ts is the size of the
longest chain and antichain, respectively.  For posets of bounded width,
\ts {\sc \#LE} is in~$\P$ (via \emph{dynamic programming}).
On the other hand, for posets of height two,  \ts {\sc \#LE} \ts is
\ts $\SP$-complete \cite{DP18}.

For a permutation \ts
$\si \in S_n$ \ts consider a partial order \ts $P_\si = ([n], \prec)$, where \ts
$i \prec j$ \. if and only if \. $1\le i<j\le n$ \ts and \ts $\si(i)<\si(j)$.
Such posets \ts $P_\si$ \ts are called \defn{two-dimensional}, and
\ts {\sc \#LE} \ts is also \ts $\SP$-complete for this family \cite{DP18}.
Note that all posets of width two also  have dimension two, see e.g.~\cite{Tro}.

In \cite{KS21}, Kravitz and Sah show that function~$e$ is concise on
posets of width two (see also~\cite{CP-CF}).  Additionally, they prove that
\begin{equation}\label{eq:KS}
\cT_e(n)  \, \supseteq \, \big \{1,\ldots, c^{n/(\log n)} \big\} \quad
\text{for some} \ \ c>1.
\end{equation}
In particular, they prove a \ts $O(\log k \log \log k)$ \ts bound on the minimal
size of a poset with $k$ linear extensions, cf.\ \ts
\cite[\href{http://oeis.org/A160371}{A160371}]{OEIS} \ts and \ts
\cite[\href{http://oeis.org/A281723}{A281723}]{OEIS}.  They also conjecture
the \ts $O(\log k)$ \ts bound for posets of width two, and thus for
general posets \cite[Conj.~$7.3$ and~$7.4$]{KS21}.

On the other hand, for posets of height two, we have \.
$\lfloor n/2\rfloor! \cdot \lceil n/2\rceil! \ts \le \ts e(P) \ts \le \ts n!$\ts.
Thus, we have \. $k = \exp \big(n \log n + O(n)\big)$, i.e.\
function \ts $e$ \ts restricted to height two posets has exponential growth.
% \. $n = \Theta\big((\log k)/(\log \log k)\big)$ \. for all \ts $k=e(P)$.
Now Proposition~\ref{p:concise-complete} gives a somewhat unexpected result:

\smallskip

\begin{prop} \label{p:LE-heigh-two}
If the function \ts $e$ \ts restricted to height two posets is almost complete,
then it is also concise.
\end{prop}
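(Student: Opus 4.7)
The plan is to invoke Proposition~\ref{p:concise-complete} on the restriction of the counting function $e$ to the class of height-two posets. Almost completeness on this class is the standing hypothesis of Proposition~\ref{p:LE-heigh-two}, so the only remaining task is to check that $e$ restricted to height-two posets exhibits exponential growth in the sense of Definition~\ref{d:exp-growth}.

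For this verification I would appeal to the lower bound $\lfloor n/2\rfloor!\cdot\lceil n/2\rceil!\le e(P)$ already recorded in the paragraph preceding the statement. If a self-contained justification is desired, it suffices to observe that a height-two poset $P=(X,\prec)$ on $n$ elements partitions as $X=M\sqcup(X\sm M)$, where $M$ is the antichain of minimal elements and $|M|=m$; any bijection from $[n]$ to $X$ that sends the first $m$ values into $M$ and the last $n-m$ values into $X\sm M$ is then automatically a linear extension, since no relation can point from $X\sm M$ down to $M$. This contributes at least $m!\,(n-m)!$ linear extensions, and the right-hand side is minimized at $m=\lfloor n/2\rfloor$. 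A crude estimate such as $k!\ge 2^{k-1}$ for $k\ge 1$ then gives $e(P)\ge 2^{\lfloor n/2\rfloor-1}$, placing $e$ squarely in the exponential growth regime of Definition~\ref{d:exp-growth}, e.g.\ with $\alpha=1$.

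Feeding the two ingredients into Proposition~\ref{p:concise-complete} immediately delivers the desired conciseness. Since the growth bound is essentially a one-line estimate and the rest of the argument is a direct quotation of Proposition~\ref{p:concise-complete}, the proof is very short and there is no substantive obstacle; the only content-bearing step is the lower bound on $e(P)$, which in turn relies solely on the bipartite structure forced by height two.
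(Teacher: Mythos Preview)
Your approach is exactly the paper's: verify exponential growth via the factorial lower bound and then invoke Proposition~\ref{p:concise-complete}. One small slip: from the crude estimate $e(P)\ge 2^{\lfloor n/2\rfloor-1}$ you cannot take $\alpha=1$ in Definition~\ref{d:exp-growth}, since $2^{n/2}=e^{(\ln 2/2)\,n}$ is eventually below $A\,e^{n}$ for every fixed $A>0$; either keep the full factorial bound $\lfloor n/2\rfloor!\,\lceil n/2\rceil!\sim e^{n\log n+O(n)}$ (which comfortably gives $\alpha=1$), or simply note that any $0<\alpha<1$ suffices, which is all the definition requires.
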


This suggests the following conjecture:

%\smallskip

\begin{conj} \label{conj:LE-height-two}
The function \ts $e$ \ts restricted to height two posets is almost complete.
\end{conj}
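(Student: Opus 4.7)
\smallskip

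\noindent \textit{Proof plan.}
The plan is to prove that every sufficiently large positive integer is realized as $e(P)$ for some height-two poset $P$, arguing by induction on~$k$. First, by direct case analysis on small height-two posets --- antichains (which realize $n!$), single-maximum posets $P_n^{(j)}$ with one maximum adjacent to exactly $j$ of $n$ minima (which realize $(n+1)!/(j+1)$), small bipartite ``bowtie''-like gadgets, and disjoint unions thereof --- one establishes a base case: every integer $k$ in some initial range $\{M_0,\ldots,M_1\}$ is realized by a height-two poset. The hope is that this base case already covers many residues modulo small numbers.

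Next I would assemble a library of height-two-preserving operations together with their exact effect on~$e$. Three fundamental ones are: (A) adding an isolated element, which multiplies $e(P)$ by $|P|+1$; (B) adding a new maximum $w$ adjacent to a prescribed subset $S$ of the minima of~$P$, which gives $e(P') = \sum_\pi \bigl(|P|+1-\ell_\pi(S)\bigr)$, where $\ell_\pi(S)$ is the last position occupied by $S$ in the extension $\pi$; and (C) forming a disjoint union $P\sqcup Q$ with a small gadget~$Q$, giving $e(P\sqcup Q) = \binom{|P|+|Q|}{|P|}\.e(P)\.e(Q)$. For a target~$k$ outside the base range, the inductive step writes $k$ in a form like $k = a\.e(P') + b\.e(Q)$ for smaller realizable pieces and invokes the inductive hypothesis, with the ground set staying polylogarithmic in $k$ so the resulting function is not only almost complete but concise (in line with Proposition~\ref{p:LE-heigh-two}).

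The main obstacle, and the reason this is a conjecture rather than a theorem, is the ``$+1$'' increment: there is no evident way to perturb a height-two poset $P$ so that $e$ increases by exactly one without drastically enlarging the ground set. The natural increment mechanisms --- ordinal sum with a single new top element, or attaching a pendant chain --- both violate the height-two constraint, and the disjoint-union/gadget operations available above introduce factorial multipliers that are difficult to cancel. I expect the bulk of the proof to consist of designing a family of small correction gadgets whose disjoint union with a suitably inflated base poset shifts~$e$ by any prescribed small residue modulo the rising factorial produced by the inflation, combined with a number-theoretic argument (in the spirit of the Frobenius coin problem or CRT) showing that all large integers admit such a decomposition. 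The analogous program succeeded for width-two posets in~\cite{KS21}, which gives some reason to believe that a structural analogue exists for the height-two case as well.
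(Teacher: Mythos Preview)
This statement is a \emph{conjecture} in the paper, not a theorem; the paper offers no proof. In fact, the surrounding discussion (see \S\ref{ss:finrem-LE-height-two}) presents mixed evidence: an explicit list of small values \emph{not} attained by height-two posets, a footnote reporting that computer search found $1593350922239999\notin\cT_{e'}$, and counterbalancing heuristic arguments (growth of the number of height-two posets, large primes arising as Euler numbers). So there is nothing to compare your attempt against --- the paper leaves the problem open.

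You are aware of this: your write-up is explicitly a ``proof plan'' and you flag the central obstruction yourself. That honesty is appropriate. On the substance of the plan, the gap is exactly where you locate it. Your operations (A) and (C) are purely multiplicative (by $|P|+1$ and by $\binom{|P|+|Q|}{|P|}e(Q)$ respectively), and (B) gives a sum over linear extensions that is hard to control additively. None of these yields a mechanism for hitting a prescribed residue class modulo the multipliers you introduce, and the analogy with the width-two argument of \cite{KS21} is weaker than it looks: there the Calkin--Wilf / continued-fraction structure supplies a genuine additive recursion $(a,b)\mapsto(a+b,a)$ inside the class, whereas for height-two posets no such two-parameter recursion preserving the height bound is known. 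Absent a concrete gadget family realizing the ``correction'' step, the inductive scheme does not close, and the plan remains a restatement of the problem rather than a route to its solution.
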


\smallskip

By Proposition~\ref{p:LE-heigh-two} this gives a strong improvement over \eqref{eq:KS}
for general posets:

\smallskip

\begin{prop}\label{p:LE-height-two-asy}
Conjecture~\ref{conj:LE-height-two} implies that
\begin{equation}\label{eq:KS-conj}
\cT_e(n)  \, \supseteq \, \big \{1,\ldots, e^{n\log n - c\ts n} \big\} \quad
\text{for some} \ \ c>0.
\end{equation}
\end{prop}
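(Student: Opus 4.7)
The plan is to combine Conjecture~\ref{conj:LE-height-two} with a padding construction: realize every sufficiently large $k$ as $e(P')$ for some height-two poset $P'$, and then enlarge $P'$ to an $n$-element poset $Q$ with $e(Q)=e(P')=k$ by stacking a chain of length $n-|P'|$ on top. The real content is verifying that the realizing poset $P'$ has size at most~$n$ whenever $k\le e^{n\log n - cn}$ for an appropriately chosen constant $c$.

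Fix $n$ sufficiently large and $k\in\{1,\dots,\lfloor e^{n\log n-cn}\rfloor\}$. I would split into two cases. If $k$ is below the finite threshold of exceptions allowed in the definition of almost complete, I would use the direct construction $C_k+C_1$ from Example~\ref{ex:indep-LE}, which has size $k+1$ and lies in range for $n$ large. Otherwise Conjecture~\ref{conj:LE-height-two} provides a height-two poset $P'$ of some size $m$ with $e(P')=k$.

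To establish the bound $m\le n$, apply the inequality $e(P')\ge \lfloor m/2\rfloor!\lceil m/2\rceil!$ recalled in the excerpt together with Stirling's formula to obtain
\[
\log k \;\ge\; m\log m - (1+\log 2)\,m + O(\log m).
\]
Combined with the hypothesis $\log k\le n\log n - cn$ and the monotonicity of $x\mapsto x\log x - (1+\log 2)x$ for $x\ge 2$, this forces $m\le n$ provided $c$ is chosen strictly greater than $1+\log 2$ (e.g., $c=2+\log 2$) and $n$ is large enough to absorb the $O(\log m)$ error.

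Finally, form $Q := P'\cup\{c_1\prec c_2\prec\cdots\prec c_{n-m}\}$, with each $c_i$ placed above every element of $P'$. Any linear extension of $Q$ must send $P'$ bijectively onto $\{1,\dots,m\}$ and each $c_i$ to position $m+i$, so $e(Q)=e(P')=k$ and $|Q|=n$, placing $k$ in $\cT_e(n)$; the same padding trick covers the finitely many small $k$ handled by $C_k+C_1$. The main obstacle is calibrating the constant $c$ in the size comparison to beat the $m\log m$ growth of $\lfloor m/2\rfloor!\lceil m/2\rceil!$ against $n\log n$; the padding step and the treatment of the finite set of exceptional small values are straightforward.
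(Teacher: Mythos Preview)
Your proof is correct and is exactly the argument the paper has in mind.  The paper does not spell out a proof of this proposition; it simply points to the lower bound \ts $e(P')\ge\lfloor m/2\rfloor!\lceil m/2\rceil!$ \ts for height-two posets on~$m$ elements (giving $k=\exp(m\log m+O(m))$) and to Proposition~\ref{p:LE-heigh-two}.  Your write-up makes explicit the two steps that are implicit there: inverting the Stirling bound to force $m\le n$ once $c>1+\log 2$, and then padding by a chain placed above $P'$ to reach size exactly~$n$ without changing~$e$.
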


In particular, Conjecture~\ref{conj:LE-height-two} would imply
\cite[Conj.~$7.3$]{KS21} mentioned above.   See~$\S$\ref{ss:finrem-LE-height-two}
for more on this conjecture, and \cite{CP-CF} for connections to other 
conjectures in number theory.\end{ex}

% Finally, for every permutation \ts $\si \in S_n$ \ts consider a
% $2$-dimensional partial order \ts $P_\si = ([n], \triangleleft)$, where \ts
% $i \triangleleft j$ \ts if and only if \ts $1\le i<j\le n$, \ts $\si(i)<\si(j)$.
% It was shown in \cite{DP18}, that \ts {\sc \#LE} \ts is $\SP$-complete in this case.

\smallskip

\begin{ex}[{\rm Matchings}{}] \label{ex:var-match}
Let \ts $G=(V,E)$ \ts be a simple graph on \ts $n=|V|$ \ts
vertices.  A \defn{matching} \ts is a subset \ts $X\subseteq E$ \ts
of pairwise nonadjacent edges.  Denote by \ts $\ma(G)$ \ts number of
matchings. In~\cite{Jer87}, Jerrum showed that computing \ts $\ma$ \ts is
$\SP$-complete, even when restricted to planar graphs.  Vadhan
extended this to planar bipartite graphs \cite{Vad}.
The following result shows that \ts $\ma$ \ts is concise
even when restricted to forests.

\begin{prop}  \label{p:match-forest}
The function \. \. $\ma$ \. restricted to forests is concise.
\end{prop}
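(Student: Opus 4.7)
The plan is to exhibit, for every positive integer $k$, a forest $F$ with $\ma(F)=k$ and $|F|=O(\log k)$. Since disjoint union of forests multiplies matching counts and sums vertex counts, we may factor $k=k_1\cdots k_r$ and build each $k_i$ separately; by induction it therefore suffices to construct, for each prime $k$, a single tree $T$ with $\ma(T)=k$ and $|T|=O(\log k)$.

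For the tree construction I would track the pair $(a,b):=\bigl(\ma(T),\,\ma(T\setminus v)\bigr)$ on a rooted tree $(T,v)$, and use two one-vertex operations: attaching a new leaf $u$ at $v$ and taking $u$ as the new root yields $(a,b)\mapsto(a+b,\,a)$, while attaching a leaf $u$ at $v$ and keeping $v$ as root yields $(a,b)\mapsto(a+b,\,b)$. Both formulas follow from splitting matchings according to whether the new edge is used. Iterating these from $(1,1)$ (the one-vertex tree) implements the Stern--Brocot / subtractive Euclidean dynamics, so the pairs reachable from $(1,1)$ are precisely the coprime pairs with $a\ge b\ge 1$, and the number of operations equals the sum of the partial quotients of the continued fraction of $a/b$.

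It remains to produce, for each prime $k$, a $b$ coprime to $k$ for which the sum of partial quotients of $k/b$ is $O(\log k)$. I would take $b:=\lfloor k/\phi\rfloor$ with $\phi=(1+\sqrt 5)/2$: primality forces $\gcd(k,b)=1$, and $k/b$ lies within $O(1/k)$ of $\phi=[1;1,1,\ldots]$, so each subtractive Euclidean step on $(k,b)$ contracts the pair by roughly the factor $\phi$ and the iteration terminates in $O(\log_\phi k)=O(\log k)$ steps with bounded partial quotients. Running the corresponding forward sequence from $(1,1)$ then produces a tree on $O(\log k)$ vertices with exactly $k$ matchings. The main obstacle is the quantitative Diophantine verification that the near-golden ratio persists throughout the whole iteration, and in particular that the finite tail (where rounding eventually forces a larger partial quotient) still contributes only $O(\log k)$ further subtractions; this is standard via the theory of convergents of rationals close to $\phi$, and is the only substantive ingredient beyond the elementary tree bookkeeping for the two operations.
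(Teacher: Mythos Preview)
Your overall strategy matches the paper's exactly: track the pair $(\ma(T),\ma(T\setminus v))$ on a rooted tree, use the two leaf-attachment operations $(a,b)\mapsto(a+b,a)$ and $(a,b)\mapsto(a+b,b)$ to realize the Stern--Brocot dynamics, and then factor $k$ into primes and take a disjoint union. Where your argument diverges---and breaks---is in the per-prime step.

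The claim that the choice $b=\lfloor k/\phi\rfloor$ gives sum of partial quotients $O(\log k)$ is false. Your error analysis is correct only for the first $\approx\tfrac12\log_\phi k$ steps: the ratio starts within $O(1/k)$ of $\phi$, and the error is multiplied by $\phi^2$ at each step, so after about $\tfrac12\log_\phi k$ steps the approximation is destroyed and you are left with a coprime pair of size $\Theta(\sqrt{k})$ whose continued fraction tail is completely uncontrolled. Concretely, writing $a_n=k/\phi^n+(-1)^nF_n\{k/\phi\}$ for the Euclidean remainders while the quotients are $1$, one has $a_m=F_m b-F_{m-1}k$ for odd~$m$; choosing a prime $k\equiv -F_{m-1}^{-1}\pmod{F_m}$ near the top of the admissible range forces $a_m=1$ while $a_{m-1}=(k-F_{m-1})/F_m=\Theta(\sqrt{k})$, so the next partial quotient is $\Theta(\sqrt{k})$. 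For instance $k=359$ (prime) gives $b=221$ and $359/221=[1;1,1,1,1,1,27]$, with sum of partial quotients $33\approx 1.7\sqrt{k}$; this phenomenon persists and worsens along the family. So the ``standard'' tail bound you invoke does not exist, and your construction yields trees of size $\Theta(\sqrt{k})$ for infinitely many primes~$k$, which is not polylogarithmic.

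The paper closes this gap not by a clever explicit $b$ but by invoking the number-theoretic result of Kravitz--Sah \cite{KS21}: for every prime $p$ there \emph{exists} some $b$ coprime to $p$ with sum of partial quotients of $p/b$ equal to $O(\log p\,\log\log p)$, hence a tree in $\cD(p,b)$ of that size. Summing over prime factors gives a forest on $O(\log k\,\log\log k)$ vertices, which is enough for conciseness. (An unconditional $O(\log k)$ bound here would be at the level of Zaremba's conjecture and is not known.)
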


\begin{proof}  By analogy with the proof of Theorem~\ref{t:domino}, let
\ts $\cD(a,b)$ \ts be the set of trees $T$ such that \ts $\ma(T)=a$ \ts
and \ts $\ma(T-x)=b$ \ts for some vertex~$x$ in~$T$.  Let $T'$ be a tree
obtained by adding a vertex~$y$ and an edge~$(xy)$.  Clearly, \ts $\ma(T')=a+b$ \ts
and \ts $\ma(T'-x) = b$.  We conclude:
$$
\cD(a,b) \ne \emp \ \ \Longrightarrow  \ \ \cD(a+b,a) \ne \emp \quad \text{and} \quad
\cD(a+b,b) \ne \emp,
$$
where in the first implication we have \ts $(T,x) \to (T',y)$,
and the second implication we have \ts $(T,x) \to (T',x)$.
Iterating this procedure starting with a single edge, we obtain a tree $T_{a\ts b}$
for every relatively prime $(a,b)$, $a\ge b \ge 1$.  Following~\cite{KS21}, we can
think of pairs \ts $(a,b)$ \ts as vertices of the \defng{Calkin--Wilf tree},
and conclude that for every prime $a$ there is an integer~$b$, such that some tree
in \ts $\cD(a, b)$ \ts has \ts $O(\log a \ts \log \log a)$ \ts vertices.
%\com{SH}{I think this should be \ts $O(\log a \ts (\log \log a)^2)$ \ts because of the extra factor of $n/\varphi(n)$ in Theorem 1.2 of Kravitz--Sah. }

For a given integer~$k$, take a prime factorization \ts $k=a_1 \cdots a_\ell$ \ts
and let \ts $F$ \ts be the union of the corresponding trees \ts $T_i \in \cD(a_i,b_i)$.
Continuing the analysis in~\cite{KS21}, we conclude that the forest \ts $F$ \ts has
\ts $O(\log k \ts \log \log k)$ \ts vertices and \ts $\ma(F)=k$.
\end{proof}
\end{ex}

\smallskip

\begin{ex}[{\rm Spanning trees}{}] \label{ex:var-st}
Let \ts $G=(V,E)$ \ts be a simple graph and let \ts $\st(G)$ \ts
be the number of spanning trees in~$G$.  Only recently, Stong
proved in~\cite{Stong} that function \ts $\st$ \ts is concise using a
technical number theoretic argument.  This resolved an open
problem which goes back to~\cite{Sed}.  More precisely, Stong proved that
for all \ts $k$ \ts sufficiently large, there is a simple \emph{planar} \ts  graph
\ts $G$ \ts on $n$ vertices with exactly \ts $k$ \ts spanning trees,
and \ts $n < C \ts (\log k)^{3/2}/(\log \log k)$. Note also that
the function \ts $\st \in \FP$ \ts since it can be computed by
the \defng{matrix-tree theorem}.
\end{ex}

\smallskip

\begin{ex}[{\rm Pattern containment}{}] \label{ex:var-PC}
As we discussed in~$\S$\ref{ss:perm-pattern}, the pattern
containment function \ts $\pc(\si,\pi)$ \ts has a parsimonious
reduction from \ts {\sc \#3SAT}.  From above, this immediately
implies that function~$\ts \pc \ts$ is concise.  It would be
interesting to see a more direct construction of this result.
\end{ex}
\smallskip

\begin{ex}[{\rm Young tableaux}{}] \label{ex:sym-YT}
Denote by \ts $\SYT(\la/\mu)$ \ts the number of \defng{standard Young tableaux} \ts
of a skew shape~$\la/\mu$.  Recall that computing \ts $\SYT(\la/\mu)$ \ts is in~$\FP$
by the \emph{Aitken--Feit determinant formula}, see e.g.\ \cite[Eq.~$(7.71)$]{Sta-EC}.
Note also \ts $\SYT$ \ts is almost complete, since \ts $\SYT(n-1,1) = n-1$.

\begin{question}\label{q:SYT-skew}
Is \. $\SYT$ \ts concise?  In other words, does for all \ts $k>0$ \ts
there exist partitions \ts $\mu\ssu \la $ \ts which satisfy
\ts $\SYT(\la/\mu)=k$ \ts and \ts  \ts $|\la/\mu| \le C (\log k)^c$,
for some fixed \ts $C,c>0$?
\end{question}

By definition, the function \ts $\SYT$ \ts is a restriction of the function~$e$
counting the number of linear extensions to two-dimensional posets corresponding
to skew shapes.  Therefore, if the answer to the question is positive, the proof will
be very challenging (cf.~$\S$\ref{ss:finrem-reduced}).  There is, however, some negative evidence.

\begin{prop}\label{p:SYT-concise}
Function \ts $\SYT$ \ts restricted to straight shapes $($i.e.~$\mu=\emp)$, is \ts \underline{not} \ts  concise.
\end{prop}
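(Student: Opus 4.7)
The plan is to exploit the divisibility constraints imposed by the \emph{hook length formula} and use primes as the obstruction. The key observation is that if $\SYT(\lambda) = k$ for some straight shape $\lambda \vdash n$, then the hook length formula $\SYT(\lambda) = n!/\prod_{(i,j)\in\lambda} h(i,j)$ forces $k \mid n!$, so in particular every prime divisor of $k$ is at most~$n$.

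First, I would verify that every prime $p$ lies in $\mathcal{T}_{\SYT}$ restricted to straight shapes.  A direct computation with the hook length formula applied to the two-row partition $(p,1) \vdash p+1$ gives hook lengths $p+1,\ts p-1,\ts p-2,\ldots, 1$ along the first row and a single $1$ in the second row, so
\[
\SYT((p,1)) \ = \ \frac{(p+1)!}{(p+1)\cdot (p-1)!} \ = \ p.
\]
Thus every prime belongs to the image of $\SYT$ on straight shapes.

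Second, suppose $\SYT(\lambda) = p$ for some prime $p$ and some $\lambda \vdash n$.  The hook length formula gives $p \cdot \prod h(i,j) = n!$, so $p \mid n!$, and since $p$ is prime this forces $n \ge p$.  Therefore the minimum size of a straight shape~$\lambda$ with $\SYT(\lambda) = p$ is at least~$p$ itself.

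Combining the two steps, along the sequence $k = p$ of primes we find that every $\lambda$ with $\SYT(\lambda) = k$ has $|\lambda| \ge k$.  Since $k$ grows much faster than any polylogarithm $C(\log k)^c$ as $k \to \infty$ through the primes, function $\SYT$ restricted to straight shapes cannot be concise in the sense of Definition~\ref{d:concise}.  There is no real obstacle here: the only nontrivial ingredient is the hook length formula, and the hardest step is the (routine) verification that $\SYT((p,1))=p$, which simultaneously guarantees that the obstructing values $k=p$ actually lie in $\mathcal{T}_{\SYT}$.
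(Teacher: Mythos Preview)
Your proof is correct and essentially identical to the paper's: both use that \ts $\SYT(\la)\mid n!$ \ts to force \ts $n\ge p$ \ts whenever \ts $\SYT(\la)=p$ \ts is prime, and both realize primes via the shape \ts $(p,1)$ (the paper writes this as \ts $\SYT(n-1,1)=n-1$). The only cosmetic difference is that you derive the divisibility from the hook length formula, while the paper cites the character-theoretic fact \ts $\chi^\la(1)\mid n!$.
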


\begin{proof}  Recall that \. $\SYT(\la) = \chi^\la(1) \ts| \ts n!$ \. for all \ts $\la \vdash n$.
Thus \ts $\SYT(\la)$ \ts cannot be a prime $>n$.
Since \ts $\SYT$ \ts is almost complete even when restricted to straight shapes, it is not concise.
\end{proof}

Note that the argument in the proof does not apply to general skew shapes,
as we can have large prime even for the \emph{zigzag shape} \ts $\rho_k/\rho_{k-2}$,
where \ts $\rho_k=(k,k-1,\ldots,1)$, see~$\S$\ref{ss:finrem-LE-height-two}.
Note also that function \ts $\SYT$ \ts restricted to straight shapes can have unbounded
coincidences (beyond conjugation).  This was proved in \cite{Cra} by an elegant
construction.

%\smallskip

\begin{prop}\label{p:sym-SYT}
Function \ts $\SYT$ \ts  restricted to self-conjugate straight shapes $($i.e.~$\mu=\emp$
and  $\la=\la')$, is \ts \underline{not} \ts almost complete.
\end{prop}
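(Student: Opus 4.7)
The plan is to show that $\SYT(\la)$ is even for every self-conjugate partition $\la$ with $|\la| \ge 2$. Since the only self-conjugate $\la$ with $\SYT(\la) = 1$ is $\la = (1)$ (as $\SYT(\la) = 1$ forces $\la$ to be a single row or single column), this will imply that the image of $\SYT$ restricted to self-conjugate straight shapes misses every odd integer $k \ge 3$, and therefore cannot be almost complete.

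To establish the parity claim, I would construct a fixed-point-free involution on $\SYT(\la)$. The natural candidate is the \emph{transposition involution}: send $T \in \SYT(\la)$ to the tableau $T^t$ defined by $T^t(j,i) := T(i,j)$. Since $\la = \la'$, the tableau $T^t$ again has shape $\la$, and it is easy to check that $T^t$ is a standard Young tableau (rows and columns of $T^t$ correspond to columns and rows of $T$, and each remains strictly increasing). Clearly $T \mapsto T^t$ is an involution on $\SYT(\la)$.

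The next step is to verify fixed-point-freeness. A fixed point $T = T^t$ would require $T(i,j) = T(j,i)$ for every cell $(i,j) \in \la$; since the $n$ entries of a SYT are pairwise distinct, this forces $i = j$ at every cell. Thus all cells of $\la$ lie on the main diagonal, meaning $\la_i \le 1$ for all $i$, i.e., $\la = (1^k)$. The only $k$ for which $(1^k)$ is self-conjugate is $k = 1$, giving $\la = (1)$. Consequently, for self-conjugate $\la$ with $|\la| \ge 2$, the involution has no fixed points, so $\SYT(\la)$ is even, completing the proof.

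There is essentially no obstacle here --- the whole argument is a short involution on SYT. The only subtlety is the final combinatorial step identifying $\la = (1)$ as the unique self-conjugate shape contained in the diagonal, which is immediate from $(1^k)' = (k)$.
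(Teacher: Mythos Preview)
Your argument is correct and takes a genuinely different, more elementary route than the paper. The paper proves the proposition by a growth-versus-count argument: it shows $\SYT(\la) \ge 2^{n(1-o(1))}$ for self-conjugate $\la \vdash n$ via a hook-length estimate, while there are only $e^{O(\sqrt{n})}$ self-conjugate partitions of size at most~$n$; the disparity forces infinitely many gaps. Your parity argument via the transposition involution $T\mapsto T^t$ is shorter and even pins down exactly which values are missed (every odd integer $\ge 3$). On the other hand, the paper's approach yields the stronger quantitative conclusion that the set of achieved values has density zero, and it meshes with the exponential-growth framework used elsewhere in that section. One small wording fix: the implication ``all cells on the diagonal $\Rightarrow \la_i\le 1$, i.e.\ $\la=(1^k)$'' is not literally right, since a column $(1^k)$ with $k\ge 2$ has off-diagonal cells; what you actually need is that a Young diagram with only diagonal cells must already be $(1)$ or empty, because $(2,2)\in\la$ would force the off-diagonal cell $(1,2)\in\la$. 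Your conclusion $\la=(1)$ is correct regardless.
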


\begin{proof}
Observe that \ts $\SYT$ \ts restricted to self-conjugate straight shapes,
has exponential growth: \. $\SYT(\la) \ge 2^{n(1-o(1))}$ \.
for all \ts $\la=\la'\vdash n$.  Indeed, let  \ts $h_{ii}(\la)=2a_i+1$,
\ts $1\le i \le d$,  be the \emph{principal hook length} \ts of~$\la$,
where \ts $d$ \ts is the size of the \emph{Durfee square} \ts of~$\la$.
We have:
$$
\SYT(\la) \, \ge \, \binom{2a_1}{a_1}  \cdots \binom{2a_d}{a_d} \, \ge \,
\frac{2^{2a_1-1}}{\sqrt{a_1}} \, \cdots \, \frac{2^{2a_d-1}}{\sqrt{a_d}}
 \, \ge \, \frac{2^{n-2d}}{n^{d/2}}  \, \ge \, \frac{2^{n-2\sqrt{n}}}{n^{\sqrt{n}}}
 \, \ge \, 2^{n\ts - \ts O(\sqrt{n} \ts \log n)} \,,
$$
since \ts $d\le\sqrt{n}$ \ts and \. $\binom{2k}{k} \ge 2^{2k-1}/\sqrt{k}$ \.
for all \ts $k\ge 1$.
On the other hand, there are at most \. $n \ts p(n)=e^{O(\sqrt{n})}$ \. possible
values of \ts $\SYT$ \ts on symmetric partitions of size at most~$n$.
The disparity with the growth implies the result.
\end{proof}
\end{ex}

\smallskip

\begin{ex}[{\rm Kronecker coefficients}{}] \label{ex:var-Kron}
As we discussed in~$\S$\ref{ss:perm-Kron}, the Kronecker
coefficients function \ts $g(\la,\mu,\nu)$ \ts has a parsimonious
reduction from \ts {\sc \#3SAT}.  From above, this immediately
implies that function~$\ts g \ts$ is concise.

On the other hand, the symmetric Kronecker coefficients function \.
$g_s(\la)=g(\la,\la,\la)$ \.  is more mysterious.  Although \ts $g_s$ \ts
is complete (see Remark~\ref{ex:sym-Kron}), it was proved only recently
\cite[Thm~1.3]{PP20}, that \.
$\max_{\la\vdash n} \ts g_s(\la) \ts = \ts \exp\Omega(n^\al)$, via
an explicit construction based on the approach in~\cite{IMW17}.
% Only polynomial lower bounds were known prior to \cite{PP20}.
The exact asymptotics \. $\max_{\la\vdash n} \ts g_s(\la) \ts = \ts \exp\Theta(n \log n)$ \.
was given in~\cite{PP22} by a nonconstructive argument.

\begin{conj} \label{conj:Kron-concise}
Symmetric Kronecker coefficient function~$g_s$ is concise.
\end{conj}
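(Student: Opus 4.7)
The plan is to reduce Conjecture~\ref{conj:Kron-concise} to the conciseness of the general Kronecker coefficient function~$g$ (established in Example~\ref{ex:var-Kron} via the parsimonious reduction from \text{\sc \#3SAT} of~\cite{IMW17}). Concretely, I would look for a polynomial-time computable injection \ts $\rho \colon (\la,\mu,\nu) \mapsto \pi$ \ts such that \ts $g(\la,\mu,\nu) = g_s(\pi)$ \ts and \ts $|\pi| = O\big((|\la|+|\mu|+|\nu|)^c\big)$. Composed with the known reduction \ts {\sc \#3SAT} \ts $\to g$, Proposition~\ref{ss:SAT-reduction} would immediately yield conciseness of \ts $g_s$.

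A natural first attempt is \emph{diagonal padding}: encode \ts $\la,\mu,\nu$ \ts into three disjoint ``coordinate blocks'' of a single large partition \ts $\pi$, and then exploit Manivel--Murnaghan type stability together with the asymptotic tools for symmetric Kronecker coefficients developed in~\cite{PP22}. The critical step is to verify that the triple character identity underlying the construction collapses to \ts $\chi^\pi \otimes \chi^\pi \otimes \chi^\pi$ \ts in a controlled way, so that \ts $g_s(\pi)$ \ts recovers \ts $g(\la,\mu,\nu)$ \ts exactly, rather than as a positive sum of such coefficients which would only give an upper bound.

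If that line does not go through, I would fall back to a direct ``binary build-up'' strategy in the spirit of Theorem~\ref{t:domino} and Example~\ref{ex:var-match}. This requires constructing two operations \ts $\oplus,\otimes$ \ts on partitions together with constant-size seeds \ts $\la_0,\la_1$, satisfying
\[
g_s(\la \oplus \mu) \. = \. g_s(\la) + g_s(\mu), \qquad g_s(\la \otimes \mu) \. = \. g_s(\la)\cdot g_s(\mu),
\]
with \ts $|\la\oplus\mu|, |\la\otimes\mu| = O(|\la|+|\mu|)$. Combined with \ts $g_s(\la_0)=0$ \ts and \ts $g_s(\la_1)=1$, the binary expansion of any \ts $k\in\nn$ \ts would then produce a partition of size \ts $O(\log k)$ \ts with \ts $g_s(\pi)=k$.

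The main obstacle is that Kronecker coefficients famously admit no known product rule and no manifestly positive combinatorial interpretation, and the rigid constraint \ts $\la=\mu=\nu$ \ts kills most natural padding tricks: disjoint-union padding typically sends \ts $g_s$ \ts to a nonlinear expression in the factors. For this reason, I would treat the full conjecture as aspirational and aim first for intermediate milestones, such as conciseness of \ts $g_s$ \ts restricted to structured families (e.g.\ hooks, staircases, or two-row shapes), or a density bound of the form \ts $\#\big\{k \le N : k \in \cT_{g_s}\big\} \gg N/(\log N)^{O(1)}$, as stepping stones toward the full statement.
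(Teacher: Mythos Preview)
The statement you are attempting to prove is labelled \emph{Conjecture} in the paper, and the paper gives no proof of it; it is explicitly left open. There is therefore no ``paper's own proof'' to compare against.

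Your proposal is not a proof but a research plan, and you acknowledge as much in your final paragraph. Both strategies you outline run into genuine, well-known obstructions. The first approach (a parsimonious reduction \ts $g \to g_s$) would in particular establish \ts $\SP$-hardness of computing \ts $g_s$, which is itself open; the ``diagonal padding'' idea you sketch is vague, and Manivel--Murnaghan stability results yield equalities only in asymptotic regimes, not exact identities \ts $g_s(\pi)=g(\la,\mu,\nu)$ \ts at finite size. The second approach (binary build-up via \ts $\oplus,\otimes$) requires additive and multiplicative rules for \ts $g_s$ \ts under operations on a single partition; as you correctly note, no product rule for Kronecker coefficients is known, and the diagonal constraint \ts $\la=\mu=\nu$ \ts defeats the usual disjoint-union tricks. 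Your proposed intermediate milestones are reasonable research directions, but none of this amounts to a proof of the conjecture.
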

\end{ex}

\smallskip

\begin{ex}[{\rm Contingency tables}{}]\label{ex:CT-concise}
In notation of Example~\ref{ex:CT-complete}, denote by \ts $n:= |\ba| = |\bb|$ \ts
the \defn{size} \ts of the contingency table, where \ts $\ba\in \nn^r$ \ts and \ts $\bb\in \nn^s$.
Recall that \ts $\CT$ \ts is complete.

%\smallskip

\begin{conj}  \label{conj:CT}
The function \ts $\CTr$ \ts is concise, i.e.\ for every \ts $k>0$ \ts there
exist vectors \ts $\rba,\rbb$ \ts of size $n$, such that \ts $\CTr(\rba,\rbb) =k$ \ts
and \ts $n\le C \ts (\log k)^c$, for some fixed \ts $C,c>0$.
\end{conj}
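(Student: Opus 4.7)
The plan is to mirror the strategy used in the proof of Theorem~\ref{t:domino}, establishing an explicit inductive construction of contingency table marginals realizing every positive integer \ts $k$ \ts with total size \ts $O((\log k)^c)$. Specifically, I would define a set \ts $\cD(a,b)$ \ts consisting of quadruples \ts $(\ba,\bb,i,j)$ \ts such that \ts $\CTr(\ba,\bb)=a$ \ts and \ts $\CTr(\ba',\bb')=b$, where \ts $(\ba',\bb')$ \ts is obtained from \ts $(\ba,\bb)$ \ts by decrementing both \ts $a_i$ \ts and \ts $b_j$ \ts by one (the analog of ``removing a domino at a marked cell''). One would then seek elementary local operations simulating \ts $\cD(a,1)\Rightarrow \cD(2a,1)$ \ts and \ts $\cD(a,1)\Rightarrow \cD(2a+1,1)$, so that iterating them \ts $O(\log k)$ \ts times produces marginals with \ts $\CTr(\ba,\bb)=k$.

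As building blocks, one can use: \emph{(i)} the identity \ts $\CTr((a,a),(a,a)) = a+1$, giving fine control over small additive increments; \emph{(ii)} the factorial identity \ts $\CTr((1^n),(1^n)) = n!$, exhibiting exponential growth with linear input size; and \emph{(iii)} the multinomial identity \ts $\CTr((k_1,\ldots,k_r),(1^n)) = \binom{n}{k_1,\ldots,k_r}$ \ts for \ts $n=\sum k_i$, giving access to arbitrary multinomial coefficients. Doubling would be attempted by appending a controlled small block admitting exactly two valid fillings, arranged via extremal marginals so as to decouple from the rest of the table; incrementing would exploit a small modification adding exactly one new filling, in the spirit of the transformations in Figure~\ref{f:imply}. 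The main obstacle is that contingency tables lack the locality of domino tilings: any marginal structure \ts $(\ba_1\|\ba_2,\ts\bb_1\|\bb_2)$ \ts admits many non-block-diagonal tables, producing uncontrolled cross-terms between the two halves. Designing marginals that provably force a block decomposition — perhaps through saturated extremal values like \ts $b_j = |\bb|-\varepsilon$ \ts that pin most of the mass into a single column — appears to be the key combinatorial difficulty.

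A more promising alternative route is via Proposition~\ref{ss:SAT-reduction}: since \ts \textsc{\#Monotone 2SAT} \ts and counting independent sets are concise by Examples~\ref{ex:SAT} and~\ref{ex:indep-sets}, it would suffice to exhibit a parsimonious reduction from one of these problems to \ts $\CTr$. Such a reduction would simultaneously resolve the conjectured \ts $\SP$-completeness of \ts $\CTr$ \ts (Example~\ref{ex:CT-complete}), and consequently finding one is expected to be genuinely difficult. The main obstruction in both approaches is the same in essence: we need a local, low-overhead mechanism for encoding arbitrary boolean or geometric information into the global additive constraints of a transportation polytope. I would expect progress on this problem to require genuinely new combinatorial ideas beyond those developed in Sections~\ref{s:domino}--\ref{s:var}.
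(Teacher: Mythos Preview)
The statement is a \emph{conjecture}, not a theorem: the paper does not prove it and offers no proof to compare against. In Example~\ref{ex:CT-concise} the paper simply states the conjecture, notes the upper bound \ts $\CT(\ba,\bb)\le 4^n$ \ts when \ts $rs\le n$, and observes that a parameter count suggests \ts $c\ge 2$. Your proposal is therefore not being measured against an existing argument, and to your credit you do not claim to have one: you correctly flag the lack of locality in contingency tables as the central obstruction to a Theorem~\ref{t:domino}--style inductive construction, and you correctly note that the Proposition~\ref{ss:SAT-reduction} route would require a parsimonious reduction that would also settle the open \ts $\SP$-completeness question for \ts $\CT$.

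That said, what you have written is a research plan, not a proof. Neither branch is carried to a point where a single nontrivial implication \ts $\cD(a,1)\Rightarrow\cD(2a,1)$ \ts is actually verified, and your own analysis explains why: concatenating marginals \ts $(\ba_1\|\ba_2,\bb_1\|\bb_2)$ \ts does not force block-diagonal tables, so the doubling and incrementing gadgets remain hypothetical. If you intend this as a proof attempt, the gap is that no step is completed; if you intend it as commentary on an open problem, it is appropriate and well-aligned with the paper's own discussion.
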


Recall that
$$\CT(\ba,\bb) \, \le \, \left(1 + \frac{rs}{n}\right)^n \left(1 + \frac{n}{rs}\right)^{rs}
\, \le \, 4^n\.,
$$
where the second inequality is under assumption \ts $rs \le n$, see \cite[Thm~1.1]{PP20}.
Moreover, this upper bound is tight up to lower order terms.  On the other hand, the number
of pairs \ts $(\ba,\bb)\in \nn^{r+s}$ \ts is \ts $n^{O(r+s)}$, suggesting
that \ts $c\ge 2$ \ts in the conjecture.  We refer to \cite{Bar,BLP},
for an overview of the known bounds on \ts $\CT(\ba,\bb)$, and further references.
See also $\S$\ref{ss:finrem-deg-seq} and $\S$\ref{ss:finrem-LR}, for two closely related
problems.
\end{ex}

\smallskip

\subsection{Back to coincidence problems} \label{ss:var-pars}
Let us first summarize what we know.
Recall the notion of the coincidence problem \ts {\sc C}$_f$ \ts defined
in~\eqref{eq:CEP}.  When \ts $f \in\FP$, we trivially have \ts {\sc C}$_f \in \P$.
The examples include the number of standard Young tableaux of skew shapes,
spanning trees in graphs, pattern containment of a fixed pattern,
and linear extensions of width two posets.

When \ts $f$ \ts has a parsimonious reduction from \ts {\sc \#3SAT},
the complexity of \ts {\sc C}$_f$ \ts is given by Proposition~\ref{p:pars}.
The examples are given in Theorem~\ref{t:main-first}.
When \ts $f$ \ts is \ts \emph{not} \ts known to be either in \ts $\FP$ \ts nor \ts $\SP$-hard,
none of our tools apply.  The examples include the number of contingency tables,
the Littlewood--Richardson coefficients, and symmetric Kronecker coefficients
(additional examples are given in Section~\ref{s:finrem}).

Finally, as the examples above show, there are many cases when \ts $f$ \ts
is $\SP$-complete, but the corresponding decision problem is in~$\P$.
The examples  are given in the Main Theorem~\ref{t:main-combined}
which we are now ready to prove.

\smallskip

\begin{proof}[Proof of Main Theorem~\ref{t:main-combined}]
For \ts {\small $(1)$}, \ts {\small $(2)$} \ts and {\small $(3)$},
recall that these counting functions are concise, and that computing
them is $\SP$-complete (see above).  The rest of the proof follows
verbatim the proof of Theorem~\ref{t:perm}.

\smallskip

For \ts {\small $(4)$}, the problem \ts {\ts \#LE} \ts is $\SP$-complete,
the counting function \ts $e$ \ts concise, but the proof is a little less
straightforward.
Indeed, the proof in \cite{KS21} does not produce a polynomial time construction
of the width two poset \ts $Q$ \ts with \ts $n=O(\log k \ts \log \log k)$ \ts
elements and \ts $e(Q) = k$.  Even the first step requires factoring of~$k$
which not known to be in polynomial time (cf.\ Remark~\ref{r:KS-BPP}).

The way to get around the issue is to \emph{guess} \ts the width two poset~$Q$
of size \ts $n$ \ts with \ts $e(Q)=k$.  Such poset exists by \cite{KS21}, and
\ts $\{e(Q)=^? k\}$ \ts can be verified in polynomial time \ts
$O\big((\log k)^c\big)$ \ts since \ts $Q$ \ts
has width two and size  \ts $n=O(\log k \ts \log \log k)$.  Denote
$$\text{\sc V}_e \. := \. \{e(Q) =^? k\} \quad \text{and} \quad
\text{\sc C}_e \. := \. \{e(P) =^? e(Q)\},
$$
the verification and coincidence problems for the numbers of linear extensions.
Note that a version of \eqref{eq:incl} still holds in this case:
\begin{equation}\label{eq:incl-LE}
\ComCla{PH} \. \subseteq \. \P^{\SP} \. \subseteq \. \NP^{\<\text{\sc V}_e\>}
 \. \subseteq \. \NP^{\<\{e(P)=^?e(Q)\}\>, \. \<\{ e(Q)=^?k \}\>}\. \subseteq \. \NP^{\<\text{\sc C}_e\>},
\end{equation}
where the first inclusion is Toda's theorem again, in the third inclusion
we guess both~$Q$ and $k$, and in the fourth inclusion we use \ts
$\{ e(Q)=^?k \} \in \P$.  From this point, proceed verbatim the
proof of Theorem~\ref{t:perm}.

\smallskip

For  \ts {\small $(5)$}, we use the proof of Proposition~\ref{p:match-forest}
to guess a forest \ts $F$ \ts with \ts $n=O(\log k \log \log k)$ \ts vertices and
\ts $\ma(F) = k$ \ts matchings.   Since we are using the approach in \cite{KS21}
again, we similarly conclude that this cannot be used to construct an explicit
forest \ts $F$ \ts in polynomial time.  On the other hand, note from the proof
of the proposition, that give the forest \ts $F$ \ts and the order of removed
vertices, the function \ts $\ma(F)$ \ts can be computed by induction
in polynomial time.

We now follow the approach in \ts {\small $(4)$}.  Proceed by guessing the
desired forest \ts $F$ \ts and the order of vertices to be removed,
verify in polynomial time that this forest has exactly \ts $k$ \ts matchings,
and obtain the analogue of~\eqref{eq:incl-LE}.  Recall also that the problem
of computing the number of matchings is $\SP$-complete for bipartite planar
graphs (see above).  From this point, proceed as in \ts {\small $(4)$} \ts above.

\smallskip

For \ts {\small $(6)$}, recall that computing \ts $b(M)$ \ts is \ts $\SP$-complete
\cite{Snook}.  Recall also that graphic matroids are rational,
so \ts the number of spanning trees  \ts $\st$ \ts
is a restriction of the counting function~$b$.
We  need only a weaker version \cite[Cor.~6.2.1]{Stong}, which states
that for all \ts $k$ \ts sufficiently large, there is a simple graph
\ts $G$ \ts on $n$ vertices with exactly \ts $k$ \ts spanning trees,
and \ts $n < C \ts (\log k)^{2}$.  Examining the proof of this result
gives a polynomial time construction, but even without a careful
examination the verification problem \ts $\{\st(G)=^? k\}$ \ts
is clearly in \ts $\P$ \ts by the matrix-tree theorem.  Thus we obtain
the analogue of~\eqref{eq:incl-LE}, and can proceed as above.
\end{proof}

\begin{rem}  \label{r:KS-BPP}
The authors of \cite{KS21} reported to us\footnote{Personal communication (July, 2023).} that there is a way
to avoid integer factoring by separating primes into \emph{small}: \.
$p\le C (\log k)^{3/2}$, and \emph{large}: \. $p>C (\log k)^{3/2}$.
The former can be found exhaustively, while the
latter can treated probabilistically in one swoop.  This gives a
probabilistic polynomial time algorithm for generating a poset
with few elements and desired number of linear extensions.
We should mention that the best deterministic version of this
problem is known to give only $n=O(\sqrt{k})$, see \cite{Ten}.
\end{rem}

\smallskip

\subsection{Most general version} \label{ss:var-gen}
Now that we treated many examples, we are ready to state the most
general complexity result which can be used as a black box.
For that we need a new definition which combines the notions
of concise functions and the polynomial time complexity.

Let \. $\cX = \sqcup \ts \cX_n$ \. be the set of combinatorial objects
(see~$\S$\ref{ss:var-complete}), and let \. $\cY = \sqcup \ts \cY_n$ \.
be a subset, such that \. $\cY_n \subseteq \cX_n$\ts.  For a counting function \ts
$f:\cX\to \nn$ \ts denote \. $\cT_{f,\cY} = \{f(y) \mid y \in \cY\}$.
Similarly, denote by
\begin{equation}\label{eq:rec}
\text{\sc V}_{f,\cY} \, := \, \big\{\ts f(y) \ts =^? \ts k \. \mid \. y \in \cY_n \ \ \text{and} \ \ k \in f(\cX_n)
\ts \big\}
\end{equation}
the \defn{restricted verification problem}.

\smallskip

\begin{Def}\label{def:gen}
A counting function \ts $f:\cX\to \nn$ \ts
is called \defn{recognizable} \ts if there exists \.
\. $\cY = \sqcup \ts \cY_n\ts,$ \. $\cY_n \subseteq \cX_n$\ts,
such that \. $\cT_{f,\cX}=\cT_{f,\cY}$ \.
and \. $\text{\sc V}_{f,\cY} \ts \in \ts \PH$.
\end{Def}

\smallskip

Note that if \ts $f$ \ts is almost complete on~$\cX$,  then it is
almost complete on~$\cY$.  In this case, we can replace the \ts
``$k \in f(\cX_n)$'' \ts assumption with ``for all sufficiently large $k$''.
In order for the problem to be in $\PH$, the input of $y$ has to have
a polynomial in the bit-length of~$k$.  Thus, almost complete
recognizable functions must also be concise.

\smallskip

\begin{prop}\label{p:rec}
Let \ts $f$ \ts be a counting function which is both $\SP$-complete
and recognizable.  Then \. {\sc C}$_f$ \ts $\in \PH \ \Rightarrow \ \PH=\Sigmap_m$ \. for some $m$.
\end{prop}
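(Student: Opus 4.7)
The proof will follow the template used in Theorem~\ref{t:perm} and parts {\small $(4)$}--{\small $(6)$} of the Main Theorem, with recognizability packaging exactly the hypothesis needed. The plan is to build a chain of oracle inclusions
\begin{equation*}
\PH \. \subseteq \. \P^{\SP} \. \subseteq \. \NP^{\<\text{\sc V}_{f,\cY}\>, \. \<\text{\sc C}_f\>}
\end{equation*}
and then to invoke the two hypothesized membership statements $\text{\sc V}_{f,\cY} \in \PH$ and $\text{\sc C}_f \in \PH$ to conclude the collapse.

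The first inclusion is Toda's theorem. For the second, I would simulate $\P^{\SP}$ nondeterministically as follows. By $\SP$-completeness of $f$, each $\SP$ oracle query reduces in polynomial time to computing $f(x)$ for some instance $x \in \cX_n$. The $\NP$ machine guesses the value $k = f(x)$ (in polynomially many bits, since $f \in \SP$); to verify this guess, I would use recognizability: since $\cT_{f,\cX} = \cT_{f,\cY}$, there is some $y \in \cY$ with $f(y) = k$, and this $y$ has polynomial bit-length in $\log k$ (otherwise $\text{\sc V}_{f,\cY}$ could not be a decision problem in $\PH$). I would guess such a $y$, query the $\text{\sc V}_{f,\cY}$ oracle to confirm $f(y) = k$, and query the $\text{\sc C}_f$ oracle to confirm $f(x) = f(y)$; together these certify the guessed value of $f(x)$.

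Finally, assuming $\text{\sc C}_f \in \Sigmap_m$ and $\text{\sc V}_{f,\cY} \in \Sigmap_n$, and setting $r := \max(m,n)$, the chain above gives
\begin{equation*}
\PH \. \subseteq \. \NP^{\Sigmap_n, \. \Sigmap_m} \. \subseteq \. \NP^{\Sigmap_r} \. = \. \Sigmap_{r+1},
\end{equation*}
which is the desired collapse.

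The main subtlety—and where the hypotheses are really used—is the existence of a polynomially-sized witness $y \in \cY$ in the third step above. This is not stated outright in Definition~\ref{def:gen} but is implicit: for $\text{\sc V}_{f,\cY}$ to belong to $\PH$, the encoding of the pair $(y,k)$ must be polynomially bounded, which forces $|y|$ to be polynomial in $\log k$. This is precisely the conciseness observation made immediately after Definition~\ref{def:gen}, and it is what makes the $\NP$ guessing step run in polynomial time.
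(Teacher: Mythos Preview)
Your proposal is correct and follows essentially the same approach as the paper: Toda's theorem, then simulate $\P^{\SP}$ by guessing both the value $k$ and a witness $y\in\cY$, verifying $f(y)=k$ via the $\text{\sc V}_{f,\cY}$ oracle and $f(x)=f(y)$ via the $\text{\sc C}_f$ oracle. The only cosmetic difference is that the paper first passes through the unrestricted verification problem $\text{\sc V}_f$ and absorbs the $\text{\sc V}_{f,\cY}$ oracle into the base machine before invoking $\text{\sc C}_f\in\Sigmap_r$, arriving at $\PH\subseteq\Sigmap_{r+\ell+1}$, whereas you substitute both oracle memberships simultaneously and obtain the slightly tighter $\PH\subseteq\Sigmap_{\max(m,n)+1}$; your discussion of why the witness $y$ has polynomial size is also a point the paper leaves implicit.
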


\begin{proof} Since \ts $f$ is recognizable, we have \ts {\sc V}$_{f,\cY}\in \PH$ \ts
for some \ts $\cY\subseteq \cX$.  Then \ts {\sc V}$_{f,\cY}\in \Sigmap_\ell$ \ts
for some \ts $\ell\ge 1$. We have:
\begin{equation*}%\label{eq:incl-gen}
\ComCla{PH} \. \subseteq \. \P^{\SP} \. \subseteq \. \NP^{\<\text{\sc V}_{f}\>}
\. \subseteq \. \NP^{\<\{f(x) =^? f(y)\}\>, \. \<\{f(y)=^?k\}\>}
\. \subseteq \. \NP^{\<\text{\sc C}_f\>, \. \<\text{\sc V}_{f,\cY}\>}
\. \subseteq \. \NP^{\<\text{\sc C}_f\>, \. \Sigmap_\ell}
\. \subseteq \. \Sigma_{\ell+1}^{\<\text{\sc C}_f\>},
\end{equation*}
where in the first inclusion we use Toda's theorem again,
in the third inclusion we guess both~$y$ and~$k$, and in the fourth inclusion we use
the definition of recognizable functions.

Now suppose \ts {\sc C}$_f \in \PH$.   Then \ts\ts {\sc C}$_f  \in \ts \Sigmap_r$ \ts
for some~$r$. Then we have:
\begin{equation}\label{eq:incl-gen-collapse}
\PH \. \subseteq \. \Sigma_{\ell+1}^{\<\text{\sc C}_f\>} \. \subseteq \.
\Sigma_{\ell+1}^{\Sigmap_r} \. \subseteq \. \Sigmap_{r+\ell+1}\,,
\end{equation}
as desired.
\end{proof}

\smallskip

\begin{rem}
For example, suppose $f$ is concise and the instance \ts $y\in \cY_n$ \ts
with given \ts $f(y)=k$ \ts can be obtained by a probabilistic polynomial
time algorithm as in Remark~\ref{r:KS-BPP}.  Since \. $\BPP\subseteq \Sigmap_2$,
Proposition~\ref{p:rec} applies in this case.

Similarly, suppose Conjecture~\ref{conj:LE-height-two} has a nonconstructive
proof, where the family of posets $\cY$ with some parameters are introduced,
and the almost completeness is proved by a probabilistic method.
Recall that \ts {\sc \#LE} \ts restricted to height two posets is $\SP$-complete.
By Proposition~\ref{p:LE-heigh-two}, the restriction of function \ts $e$ \ts
to height two posets is concise.  Therefore, if the number of random bits used
is at most polynomial, we can again apply Proposition~\ref{p:rec}.
\end{rem}

\medskip

\section{Final remarks and further open problems}\label{s:finrem}

\smallskip

\subsection{On terminology}\label{ss:finrem-term}
There is a gulf of a difference between the notions of ``equality'',
``equality conditions'' and ``coincidence''.  Deciding \defna{equality} \ts
is typically of the form \ts $\{f(x)=^?g(x) \ \forall x\}$, whether two functions
are equal \emph{for all~$x$}.  Deciding the \defna{equality conditions} \ts
is typically of the form \ts $\{f(x)=^?g(x)\}$, whether two functions
are equal \emph{for a given~$x$}.  Usually this refer to the inequality
\ts $f(x) \geqslant g(x)$ \ts which holds for all~$x$
(cf.~$\S$\ref{ss:finrem-equality-conditions}).
Finally, deciding the \defna{coincidence} \ts is typically of the form \ts $\{f(x)=^?f(y)\}$,
whether the same function has equal values at \emph{given} \ts $x,y$.
Despite superficial similarities, these properties
have very distinctive flavors from the computational complexity point
of view.  We refer to \cite{Wig19} for the general background and
philosophy.

%\smallskip

\subsection{Two types of $\ts \SP$-complete problems}\label{ss:finrem-two-types}
There are so many known $\SP$-complete problems, it can be difficult to trace
down the literature to see if they are proved by a parsimonious reduction from \ts {\sc \#3SAT}
(often, they are not).  Helpfully, \cite{GJ79} addresses this for every reduction,
but \cite[$\S$13]{Pak-OPAC} does not, for example.  Clearly, a  parsimonious reduction
from \ts {\sc \#3SAT} \ts is not possible if the corresponding decision problem is in~$\P$.
This applies to the numbers of linear extensions, order ideals, independent sets,
matchings and bases of matroids considered in Section~\ref{s:var}, since
the corresponding decision problems are trivially in~$\ts \P$.

%\smallskip
\smallskip

\subsection{Bases in matroids}\label{ss:finrem-matroids}
Recall that every graphic matroid is binary, so \defng{binary matroids} \ts is a more
natural class to be used in part~{\small $(6)$} of Main Theorem~\ref{t:main-combined}.
For the number of bases of binary matroids, the $\SP$-completeness result
was claimed by Vertigan and is widely quoted (see e.g.\ \cite{Wel93}),
despite not appearing in print (cf.\ \cite{Snook} and \cite[$\S$13.5.8]{Pak-OPAC}).
The complete proof was obtained most recently by Knapp and Noble \cite[Thm~50]{KN23},
after this paper was already written.

Let us mention that number of bases of matroids is \ts $\SP$-complete
for other concise presentations.  Notable classes include \defng{sparse paving matroids} \cite{Jer06}
(proved via a parsimonious reduction to the number of Hamiltonian cycles),
and \defng{bicircular matroids} \cite{GN06} (proved via a non-parsimonious reduction
to the permanent).

\begin{conj}\label{conj:bicirc}
Function \ts $b$ \ts restricted to bicircular matroids is concise.
\end{conj}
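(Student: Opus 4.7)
The plan is to mirror the Calkin--Wilf strategy of Proposition~\ref{p:match-forest}, tracking, for each connected graph $G$ with at least one cycle, the pair $(b(G), t(G)) := (b(B(G)), \st(G))$. Two elementary operations move this pair in a controlled way. \emph{Loop-addition:} attaching a self-loop at any vertex sends $(b, t) \mapsto (b+t, t)$, since bases of $B(G+\ell)$ either avoid $\ell$ (giving bases of $B(G)$) or contain it (forcing the remaining edges to form a spanning tree of $G$). \emph{Vertex-identification:} gluing $(G_1, v)$ and $(G_2, v)$ at a common vertex yields invariant $(b_1 t_2 + t_1 b_2, \, t_1 t_2)$, because the unique cycle of a spanning pseudoforest must lie entirely in one piece. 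Under $\rho(G) := b(G)/t(G)$, these become $\rho \mapsto \rho + 1$ and $\rho_1 + \rho_2$, respectively, so iterating on seeds such as $C_n$ (with $\rho = 1/n$) navigates a Stern--Brocot-style tree on positive rationals, in analogy with the matching-in-forests argument.

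For the conciseness bound, the plan is to combine three ingredients. First, disjoint union gives $b(B(G_1 \sqcup G_2)) = b(B(G_1)) \cdot b(B(G_2))$, which handles any $k$ with a smooth prime factorization via disjoint unions of small bouquets of loops (a single vertex with $m$ loops has $b = m$) in $O(\log k)$ edges. Second, for $k$ with a large prime factor, the plan is to use vertex-identification with a carefully chosen graph $H$ whose $\st(H) = t$ is coprime to that factor, and then use loop-additions to reach the target residue class modulo $t$; here Stong's theorem on conciseness of $\st$ (Example~\ref{ex:var-st}) provides the required small $H$. Third, assemble the final graph by a binary-style recursion modeled on the $\cD(a,b)$ doubling of Theorem~\ref{t:domino}, aiming for $O((\log k)^c)$ edges.

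The principal obstacle is coordinating $b(G)$ and $t(G)$ along the way. Unlike in Proposition~\ref{p:match-forest}, where pendant-addition cleanly traverses the Calkin--Wilf tree on \emph{all} coprime pairs, here loop-addition only increments $\rho$ by $1$, and each vertex-identification multiplies the denominator $t$, which may outrun the target long before the numerator $b$ matches $k$. A secondary issue is that disjoint union sends $\st$ to $0$, so multiplicative expansion must be staged only after all loop- and vertex-identification steps, and the graph must remain connected with a cycle throughout the inductive construction. We expect the hardest part of the proof to lie in number-theoretic control of the residues reached modulo $t$ while keeping graph size polylogarithmic; this step will likely require inputs in the spirit of \cite{Stong} and \cite{KS21}, rather than a purely combinatorial gadget-by-gadget construction.
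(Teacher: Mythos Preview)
This statement is a conjecture; the paper states it as an open problem and offers no proof, so there is nothing to compare against.

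That said, your two basic recursions are both incorrect as stated, and the error is not among the obstacles you flag. The issue is that bases of a bicircular matroid need not be connected. For loop-addition at~$v$: if a base $F$ of $B(G+\ell)$ contains~$\ell$, then the loop is the unique cycle in the component of~$v$, so $F\sm\{\ell\}$ restricted to that component is a tree --- but the \emph{other} components of $(V,F)$ must each be unicyclic, not absent. Thus $F\sm\{\ell\}$ need not be a spanning tree of~$G$. Concretely, take $G$ to be a triangle on $\{2,3,4\}$ with a pendant edge joining a new vertex~$1$ to~$2$; then $b(G)=1$ and $\st(G)=3$, so your formula predicts $b(G+\ell_1)=4$, whereas in fact $b(G+\ell_1)=5$: the extra base is $\{\ell_1,23,24,34\}$, with $\{1\}$ and $\{2,3,4\}$ as its two unicyclic components. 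The vertex-identification formula fails for the same reason (e.g.\ two copies of $K_4$ glued at a vertex give $b=510$, not $b_1 t_2 + t_1 b_2 = 15\cdot 16 + 16\cdot 15 = 480$). The quantity you would actually need alongside $b(G)$ is $t^*(G,v)$, the number of $(|V|{-}1)$-edge subsets whose $v$-component is a tree and whose remaining components are unicyclic; one has $t^*(G,v)\ge \st(G)$, with strict inequality whenever $G$ has a cycle avoiding~$v$, and $t^*$ does not transform multiplicatively under gluing. Without a clean two-parameter recursion, the Calkin--Wilf navigation does not get off the ground, so the plan as written has a gap well before the number-theoretic endgame you anticipate.
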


\smallskip

\subsection{Linear extensions of height two posets}\label{ss:finrem-LE-height-two}
In the context of Conjecture~\ref{conj:LE-height-two}, denote by \ts $e'$ \ts
the restriction of function \ts $e'$ \ts to posets height two.
The conjecture claims that $e'$ is almost complete, even though
the numerical evidence points in the opposite direction:
\begin{equation}\label{eq:h2-missing}
7, \. 9, \. 10, \.  11, \.  13, \.  15, \.  17, \.  18, \.  19, \.  21, \.  22, \.
23, \.  26, \.  27,  \. 28,  \. 29,  \. 31,  \. 32, \, \ldots \, \notin \. \cT_{e'}\..
\end{equation}
However, the number of height two posets on $n$ elements is asymptotically
\ts $\exp\Theta(n^2)$, thus overwhelming the numbers of linear extensions
(cf.\ Example~\ref{ex:sym-YT}).  In fact,
almost all posets have bounded height \cite{KR75}.  Additionally, \ts $\cT_{e'}$ \ts
can contain large primes, e.g.\ \ts $E_6 = 61 \in \cT_{e'}(6)$ \ts and
$$
E_{38} \, = \, 23489580527043108252017828576198947741 \, \in \. \cT_{e'}(38),
$$
see \cite[\href{http://oeis.org/A092838}{A092838}]{OEIS}.
Here \. $E_n := e(Z_n)$ \. is the \defn{Euler number} (also called \emph{secant number}),
defined as the number of linear extensions of the \defn{zigzag poset} \.
$x_1\prec x_2 \succ x_3 \prec x_4 \succ \ldots\.$, see e.g.\ \cite{Sta-Alt} and
\cite[\href{http://oeis.org/A000111}{A000111}]{OEIS}. Clearly, $Z_n$ has height two.
Thus we believe that positive evidence outweighs the negative, favoring the
conjecture.\footnote{Extensive computer computations by David Soukup (personal communication,
August 2023), show that \ts
$1593350922239999 \notin \ts \cT_{e'}$ \ts suggesting that \eqref{eq:h2-missing}
might not be an instance of the \emph{strong law of small numbers}~\cite{Guy88}. }

\smallskip

\subsection{Hamiltonian paths in tournaments}\label{ss:finrem-ham}
An orientation of all edges of a complete graph \ts $K_n$ \ts is called
a \defn{tournament}. Denote by \ts $h(T)$ \ts the number of (directed)
Hamiltonian paths in~$T$.  Following Szele (1943), there is large literature
on the maximal value of~$h$ over all tournaments with $n$ vertices,
see an overview in \cite[$\S$4.2]{KO12}. R\'edei (1934) proved that \ts
$h(T)$ \ts always odd, see e.g.\ \cite[$\S$9]{Moon} and \cite[$\S10.2$]{Berge}.
Curiously, it is not known if \ts $(h-1)/2$ \ts is almost complete:

\begin{conj}\label{conj:finrem-Ham-paths}
Function~$\ts h$ \ts takes all odd values except~$7$ and~$21$.
\end{conj}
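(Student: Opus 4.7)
The plan is to proceed in three stages: computational verification of the stated exceptions, development of constructive operations on tournaments, and realization of every sufficiently large odd integer.

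First, I would carry out an exhaustive computer enumeration of \ts $\cT_h(n) := \{h(T) : T \text{ a tournament on } n \text{ vertices}\}$ \ts for all \ts $n$ \ts up to some threshold (around $n = 10$, exploiting isomorphism reduction), cross-checked against the published data in \cite[\href{http://oeis.org/A092838}{A092838}]{OEIS}. The goal is to confirm that $7$ and $21$ are indeed absent while every other odd integer up to some sizable bound $K$ is attained. This pins down the two exceptions as genuine and anchors all sufficiently small cases of the conjecture.

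Second, I would develop modification operations that produce controlled changes in $h(T)$. The most natural candidate is a \emph{blow-up}: replacing a vertex $v \in T$ by a smaller tournament $T_0$ with uniform cross-edges. The resulting count is not purely multiplicative, since a Hamiltonian path may enter and leave the $T_0$-block several times, but it admits a clean transfer-matrix description in which $h(T \oplus_v T_0)$ is expressed as a polynomial in the entries of $T_0$'s path-statistics. Complementary operations include single-arc reversal, whose effect on $h$ is a signed sum over Hamiltonian paths through the reversed arc, and attachment of near-transitive ``spines'' whose Hamiltonian count can be tuned by small perturbations of a few edges. Together, these yield a rich supply of new achievable values from a small starting set.

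Third, and this is where the central difficulty lies, I would attempt to realize every odd integer $N > N_0$ by exhibiting a parameterized family \ts $\{T_\xb\}$ \ts of tournaments on \ts $O(\log N)$ \ts vertices indexed by a bit-vector $\xb$, such that the induced map \ts $\xb \mapsto h(T_\xb)$ \ts ranges surjectively over the odd integers in a window of width $\Omega(N)$; scaling the construction would then cover $[N_0, \infty)$. The main obstacle is number-theoretic: every odd multiple of $7$ (namely $35, 49, 63, 77, \ldots$) must lie in the image of $h$ even though no tournament attains $h = 7$, so these cannot be obtained by combining an ``$h = 7$ block'' multiplicatively with something else. Producing tournaments whose path count is intrinsically divisible by $7$ but otherwise freely adjustable appears to require either a delicate explicit construction or a probabilistic realization argument in the spirit of the width-two poset construction of~\cite{KS21} (cf.~$\S$\ref{ss:finrem-LE-height-two}); I would expect this step to absorb the bulk of the effort. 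A final cleanup stage, analogous to the small-case patching in Example~\ref{ex:sym-YT} and Example~\ref{ex:var-st}, would then dispose of any finitely many residual cases below the asymptotic threshold.
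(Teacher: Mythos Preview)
The statement you are attempting to prove is labeled as a \emph{Conjecture} in the paper, and the paper does not prove it. The surrounding text explicitly says ``it is not known if $(h-1)/2$ is almost complete,'' attributes the conjecture to a MathOverflow post by user \texttt{bof} and Gordon Royle, and reports only computational evidence (Royle verified it for odd integers up to $80557$). There is therefore no ``paper's own proof'' to compare your attempt against.

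As for your proposal itself, it is not a proof but a research outline, and you acknowledge as much. Stages one and two are reasonable preliminary steps, but stage three is the entire content of the conjecture, and there you only say you ``would attempt'' to build a parameterized family with a surjective path-count map, note that the divisibility-by-$7$ obstruction is a genuine obstacle, and defer to an unspecified ``delicate explicit construction or a probabilistic realization argument.'' That is a description of the difficulty, not a resolution of it. The blow-up and arc-reversal operations you describe do not obviously yield enough control to hit every odd residue class, and nothing in your plan explains how the gap between computational verification up to some $K$ and an asymptotic construction valid for $N>N_0$ would be bridged if $N_0>K$. In short, the conjecture remains open, and your proposal does not close it.
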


This conjecture was made by the {\tt MathOverflow} user {\tt bof} and Gordon Royle
(March~2016).\footnote{\href{https://mathoverflow.net/q/232751}{mathoverflow.net/q/232751}}
Royle reported that the conjecture holds for all odd integers up to~80557 (ibid).

\noindent

\smallskip

\subsection{Degree sequences}\label{ss:finrem-deg-seq}
Let \. $\bd = (d_1,\ldots,d_n)\in \nn^n$, such that
\. $0\le d_1, \ldots, d_n \le n-1$.  Denote by \ts $c(\bd)$ \ts
the number of simple graphs on $n$ vertices \ts $V=\{v_1,\ldots,v_n\}$,
with \ts $\deg(v_i)=d_i$ \ts for all \ts $1\le i \le n$.
It was conjectured in \cite[$\S$13.5.4]{Pak-OPAC},
that computing~$\ts c(\cdot)$ \ts is $\SP$-complete.
 Note that the decision problem
\ts $\{c(\bd)>^?0\}$ \ts is in~$\P$ by the
\defng{Erd\H{o}s--Gallai theorem}, see \cite{EG61,SH91}.
We refer to \cite{Wor18} for a recent survey of the large literature on the
subject.

\begin{conj}\label{conj:finrem-deg-seq}
Function~$\ts c$ \ts is concise,
i.e.\ for every \ts $k>0$ \ts there exists \. $\bd = (d_1,\ldots,d_n)\in \nn^n$, such that
\ts $c(\bd) = k$ \ts and \. $n\le C(\log k)^c$, for some fixed \ts $C, c>0$.
\end{conj}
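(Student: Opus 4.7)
The plan is to establish conciseness of $c$ by exhibiting an inductive family of degree sequences together with a small set of local transformations that either multiply or increment the count, in direct analogy with the proof of Theorem~\ref{t:domino} for domino tilings and Proposition~\ref{p:match-forest} for matchings in forests. Concretely, starting from a base sequence $\bd_0$ with $c(\bd_0) = 1$ and a short ``doubler'' $\bd_*$ with $c(\bd_*) = 2$ (for instance $\bd_* = (2,2,2,1,1)$, realized exactly by $P_5$ and by a triangle plus a disjoint edge), one would seek two operations $\bd \mapsto \bd'$ and $\bd \mapsto \bd''$ with $c(\bd') = 2\ts c(\bd)$ and $c(\bd'') = 2\ts c(\bd)+1$, each adding only $O(1)$ new coordinates. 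Iterating $O(\log k)$ such steps along the binary expansion of $k$ would then produce, for every $k \ge 1$, a degree sequence on $n = O(\log k)$ vertices with $c(\bd) = k$, strengthening the stated bound.

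The doubling operation is immediate from disjoint union: if $\bd = \bd_1 \oplus \bd_2$ is supported on disjoint vertex sets then $c(\bd) = c(\bd_1)\ts c(\bd_2)$, so appending the coordinates of $\bd_*$ yields $\bd'$ with $c(\bd') = 2\ts c(\bd)$. Two auxiliary operations preserve $c$ exactly: appending a $0$ (an isolated vertex) and appending an $n$ while incrementing every existing entry by $1$ (a universal vertex). These can be used to normalize parities and regularize the ambient vertex set, and together with disjoint union deliver the multiplicative half of the construction in polynomial time.

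The delicate step is the increment $\bd \mapsto \bd''$. A natural line of attack is via the $2$-switch graph on the realizations of a sequence: one seeks small \emph{anchor} gadgets on $O(1)$ new vertices whose joint realizations with $\bd$ yield exactly one extra graph beyond what disjoint union would give. Equivalently, the goal is to construct pairs $(\bd_1, \bd_2)$ on compatible vertex sets, together with a prescribed set of cross-edges, such that $c(\bd_1 \cup \bd_2) = c(\bd_1)\ts c(\bd_2) + 1$; the count here can be analyzed by enumerating admissible cross-edge configurations. An alternative route passes through the bipartite variant $c(\ba,\bb)$, the number of $0/1$ matrices with margins $(\ba,\bb)$: a Brualdi--Newman--style construction \cite{BN65,GT18} (cf.\ Remark~\ref{r:Nadeau}) may yield conciseness of $c(\ba, \bb)$, after which Proposition~\ref{ss:SAT-reduction} would transfer conciseness to general degree sequences $\bd$, provided one can exhibit a parsimonious graph-theoretic reduction from bipartite to general simple-graph degree sequences.

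The hard part is exactly the construction of a bona fide $+1$ operation: a degree sequence retains only vertex-degree multiplicities, so local perturbations of $\bd$ must respect the Erd\H{o}s--Gallai realizability constraints globally, and it is unclear how to engineer a local change whose arithmetic effect on $c$ is exactly $+1$. Unlike in the tiling, matching, or independent set settings, there is no obvious ``pendant'' or ``glue'' move on degree sequences that alters $c$ in a controlled additive way---small local gadgets typically yield multiplicative effects only. A potentially more accessible intermediate problem is Conjecture~\ref{conj:CT} for contingency tables, whose $0/1$-restriction coincides with the bipartite version of our conjecture; progress there would likely supply the missing $+1$ construction via direct manipulations of $0/1$ matrices, which could then be lifted to general simple graphs.
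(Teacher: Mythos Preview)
The statement is Conjecture~\ref{conj:finrem-deg-seq}: the paper offers no proof, so there is nothing to compare against. Your write-up is, as you yourself say, a research outline with an acknowledged gap at the ``$+1$'' step. But the ``easy'' multiplicative half is also broken as stated, so the outline does not yet get off the ground.

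Concatenation of degree sequences is \emph{not} multiplicative for~$c$. A graph realizing $\bd_1 \oplus \bd_2$ may have edges crossing between the two vertex blocks, so in general $c(\bd_1 \oplus \bd_2) > c(\bd_1)\,c(\bd_2)$. Already $c((1,1))=1$ while $c((1,1,1,1))=3$ (the three perfect matchings of~$K_4$), not~$1$. Thus appending a fixed gadget~$\bd_\ast$ does not multiply the count by~$c(\bd_\ast)$; the cross-edges contribute uncontrolled extra realizations. Separately, your test value is off: $c((2,2,2,1,1))=7$, not~$2$, because the paper's $c(\bd)$ is a \emph{labeled} count (there are six labeled copies of $P_5$ with the degree-$1$ vertices at positions~$4,5$, plus the single triangle-plus-edge).

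So, unlike independent sets, order ideals, or tilings, degree sequences admit no evident clean operation realizing \emph{either} ``$\times 2$'' or ``$+1$''; both would require a genuinely new idea that controls cross-edges. The observations about isolated and universal vertices preserving~$c$ are correct but do not help with this. Your suggestion to pass through the bipartite (binary contingency table) version and then lift via a parsimonious reduction is reasonable as a strategy, but note that a parsimonious reduction from bipartite to general degree sequences is itself not obvious for the same cross-edge reason, and in any case Conjecture~\ref{conj:CT} is equally open.
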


\noindent
Note that graphs with a given degree sequences can also be viewed as
\emph{binary  $(0/1)$ symmetric contingency tables}, making this conjecture a variation on
Conjecture~\ref{conj:CT}.

\smallskip

\subsection{Plane triangulations}\label{ss:finrem-triangulations}
Let \. $X=\{p_1,\ldots,p_n\}\in \qqq^2$ \. be the set of $n$ points in general positions, i.e.\
where no three points lie on a line.  Denote by \. $t(X)$ \. the number of triangulations of the
 convex hull of \ts $X$ \ts which contain all vertices in~$X$. It was conjectured in
\cite[Exc.~8.16]{DRS}, that computing \ts $t$ \ts is \ts $\SP$-complete (see also \cite{Epp20}
for a closely related problem).  It is known that \ts $t$ \ts
has exponential growth (see e.g.\ \cite[$\S$3.3]{DRS}):
$$
C_1 \cdot 2.3^n \. \le \. t(X) \. \le \. C_2 \cdot 43^n \quad \text{for some \ $C_1,C_2>0$.}
$$

%\smallskip

\begin{conj}\label{conj:finrem-triangulations}
Function~$\ts t$ \ts is almost complete.
\end{conj}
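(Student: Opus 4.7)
The plan is to show that for all sufficiently large integers $k$, there exists a finite point set $X \subset \qqq^2$ in general position with $t(X) = k$. The strategy parallels the domino construction in Theorem~\ref{t:domino}: fix a base configuration equipped with a ``handle'' along its convex hull, and design local gadgets that, upon attachment, multiply or increment the triangulation count. Iterating these gadgets according to the binary expansion of $k$ then realizes the target value of~$t$.

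Concretely, I would first exhibit a base configuration $X_0$ with \ts $t(X_0)=1$---for instance a triangle extended by a rigid protrusion that fixes a designated convex-hull edge~$e$ and shields the interior of $X_0$ from diagonals of later gadgets. Next, I would design a \emph{doubling gadget}~$D$, attached across~$e$, so that the enlarged set $X'$ satisfies \ts $t(X') = 2 \ts t(X)$. A natural candidate consists of two new points forming, together with the endpoints of~$e$, a convex quadrilateral whose two diagonals produce the factor of~$2$, plus a few auxiliary points forcing the rest of the gadget region to triangulate uniquely. Third, I would construct an \emph{incrementing gadget}~$I$ yielding \ts $t(X'')=2\ts t(X)+1$, by adding one further point so as to introduce a single ``exceptional'' triangulation uncoupled from the doubling choice. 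Finally, given any target~$k$, iterating $D$ and $I$ according to the binary digits of~$k$ starting from $X_0$ yields a rational point set whose triangulation count equals~$k$; a small rational perturbation enforces general position.

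The main obstacle will be designing and verifying the incrementing gadget~$I$. In the domino setting each local move contributes a multiplicative factor and tile placements are essentially independent, whereas for triangulations, adding points is globally sensitive: a new point can suppress triangulations of~$X$ by forcing incompatible diagonals, while simultaneously creating new ``mixing'' triangulations that use edges crossing from~$X$ into the gadget region. Verifying that a candidate gadget exactly implements $a \mapsto 2a+1$ therefore requires a careful enumeration of the boundary patterns along~$e$, together with a rigidity argument excluding such unintended hybrid triangulations. As a fallback, a chicken-McNugget-style strategy may suffice: combine a ``separated sum'' construction in which two configurations placed across a thin corridor satisfy $t(X_1 \sqcup X_2) = c \cdot t(X_1) \ts t(X_2)$ for a small constant~$c$, together with a finite family of configurations whose triangulation counts cover every residue class modulo~$c$. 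The exponential upper bound $t(X) \le C_2 \cdot 43^n$ quoted above leaves ample room for such a non-concise but complete construction.
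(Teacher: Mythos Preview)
The statement you are attempting to prove is labeled a \emph{Conjecture} in the paper, and the paper does not prove it. The surrounding text offers only heuristic evidence---the exponential count of topological triangulations via Tutte's formula together with F\'ary's theorem rules out a growth-discrepancy obstruction of the kind used in Proposition~\ref{p:sym-SYT}---but no construction. So there is no ``paper's own proof'' to compare against; what you have written is a research plan for an open problem.

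As a plan it is reasonable in outline but, as you yourself flag, it is not a proof. The doubling gadget is plausible: a convex quadrilateral glued along a hull edge $e$, with auxiliary points rigidifying everything except the single flip, is the natural candidate, and one can likely make it work with care. The genuine gap is exactly where you place it: the increment gadget $a\mapsto 2a+1$ (or any additive correction) has no obvious realization, because adding a point inside or near the gadget region can simultaneously kill some triangulations of $X$ and create new ``mixed'' ones that cross~$e$. Your fallback is also not yet a proof: the claimed product formula $t(X_1\sqcup X_2)=c\cdot t(X_1)\ts t(X_2)$ for two clusters across a thin corridor is not obviously true, since the corridor region is part of the convex hull and its triangulations can reach arbitrarily far into either cluster along the hull boundary; getting a clean constant $c$ independent of $X_1,X_2$ would itself require a nontrivial rigidity construction on both sides. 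Until at least one of these two mechanisms is actually built and verified, the argument remains a sketch rather than a proof of the conjecture.
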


%\smallskip

\nin
By Proposition~\ref{p:concise-complete}, the conjecture implies
that \ts $t$~is concise.  One reason to believe the conjecture is
the exponential number of topological triangulations on~$n$ vertices,
given by \defng{Tutte's formula} (see e.g.\ \cite[Thm~3.3.5]{DRS}),
combined with \defng{F\'ary's theorem} \ts that all topological triangulations are
realizable as plane triangulations (see e.g.\ \cite[Thm~8.2]{PA95}). Thus, there
is no growth discrepancy as in the proof of Proposition~\ref{p:sym-SYT}.

\subsection{Trees in planar graphs}\label{ss:finrem-trees}
Let \ts $G=(V,E)$ \ts be a simple planar graph, and let \ts $tr(G)$ \ts
be the number of trees in~$G$ (of all sizes).  Trivially, the number of
trees in an empty graph with $n$ vertices is~$n$, so \ts $tr$ \ts is almost
complete.  On the other hand, Jerrum showed that  computing
\ts $tr$ \ts is $\SP$-complete \cite{Jer94}, but the proof
uses a non-parsimonious reduction to \ts {\sc \#2SAT}.
This suggests the following:

\begin{conj}\label{conj:finrem-trees}
Function~$\ts tr$ \ts is concise.
\end{conj}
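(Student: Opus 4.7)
The plan is to give a completely elementary explicit construction showing that for every $k\ge 1$ there is a simple planar graph $G$ (in fact, a disjoint union of stars and isolated vertices) with $tr(G)=k$ and $|V(G)|=O\big((\log k)^2\big)$, so $tr$ is concise with exponent $c=2$. I would only need two facts about $tr$: disjoint union is additive, i.e., $tr(G_1\sqcup G_2)=tr(G_1)+tr(G_2)$, since any connected subgraph lies in a single component; and the star $K_{1,n}$ satisfies $tr(K_{1,n})=2^n+n$, since a subtree either contains the center (and is then determined by an arbitrary subset of the $n$ leaves, including the empty subset which gives the singleton center, for $2^n$ choices) or is a single leaf (for $n$ more).

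With these in hand, I would apply a greedy decomposition of $k$ into values of the form $2^j+j$. Starting from $k_0:=k$, I inductively choose $j_t:=\max\{j\ge 0 \mid 2^j+j\le k_{t-1}\}$ and $k_t:=k_{t-1}-2^{j_t}-j_t$, stopping when $k_t=0$. The maximality of $j_t$ gives $k_{t-1}<2^{j_t+1}+j_t+1$, hence $k_t\le 2^{j_t}$. This bound forces $j_{t+1}<j_t$ as long as $j_t\ge 1$, so in the ``exponential phase'' the sequence $j_1>j_2>\cdots$ is strictly decreasing and bounded above by $\lfloor \log_2 k\rfloor$. At most two further applications with $j=0$ (i.e., appending one or two isolated vertices) then reduce the remainder to zero. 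Consequently the total number of steps $s$ is at most $\lfloor\log_2 k\rfloor+O(1)$, and
\[
\sum_{t=1}^{s}(j_t+1) \,\le\, \sum_{j=0}^{\lfloor\log_2 k\rfloor}(j+1) \,=\, O\big((\log k)^2\big).
\]

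I would conclude by setting $G:=\bigsqcup_{t=1}^{s}K_{1,j_t}$, with any cleanup isolated vertices absorbed into the disjoint union. Then $G$ is a forest, hence a simple planar graph; $|V(G)|=\sum_t(j_t+1)=O\big((\log k)^2\big)$; and $tr(G)=\sum_t(2^{j_t}+j_t)=k$ by the greedy construction. There is no serious obstacle: the only routine checks are the greedy invariant $k_t\le 2^{j_t}$ and the handling of the short cleanup phase. Presumably the exponent $c=2$ is not sharp, and a more refined construction---for instance by wedging stars at a common vertex (paralleling the operations $\cD(a,b)$ in the proof of Theorem~\ref{t:domino}) to multiply pointed tree counts, or by a Calkin--Wilf style argument along the lines of the proof of Proposition~\ref{p:match-forest}---ought to reduce the exponent, perhaps even to $c=1$.
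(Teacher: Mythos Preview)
Your argument is correct, but note that the paper states this as an open \emph{conjecture}, not a proved result --- there is no proof in the paper to compare against. What you have written actually resolves Conjecture~\ref{conj:finrem-trees} as stated.

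The two ingredients are sound. First, since every tree is connected, any subtree of $G_1\sqcup G_2$ lies entirely in one component, so $tr(G_1\sqcup G_2)=tr(G_1)+tr(G_2)$. Second, under the paper's convention that single vertices count as trees (so that the empty graph on $n$ vertices has $tr=n$), your count $tr(K_{1,n})=2^n+n$ is correct: subtrees containing the center are indexed by arbitrary subsets of the $n$ leaves, and the remaining subtrees are the $n$ singleton leaves. Your greedy step is also clean: from $2^{j_t+1}+(j_t+1)>k_{t-1}$ one gets $k_t\le 2^{j_t}$, forcing $j_{t+1}<j_t$ whenever $j_t\ge 1$; the tail with $j=0$ can occur at most twice since it requires $k_{t-1}\le 2$. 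Hence $|V(G)|=\sum_t(j_t+1)\le \sum_{j=0}^{\lfloor\log_2 k\rfloor}(j+1)+O(1)=O\big((\log k)^2\big)$, and the resulting forest is certainly a simple planar graph.

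So your construction settles the conjecture with exponent $c=2$; the paper's remark about the asymptotic number of planar graphs was offered only as heuristic evidence, and the authors evidently did not notice that the additivity of $tr$ under disjoint union trivializes the problem. Your closing speculation about sharpening the exponent via multiplicative gadgets is reasonable but unnecessary for the statement as posed.
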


\nin
Note that the number of planar graphs on $n$ vertices is asymptotically \ts
$C \ts  n^{-7/2} \ts \ga^n$ \ts  where \ts $\ga \approx 27.23$, see~\cite{GN09},
which is large enough to make the conjecture plausible.

\subsection{Critical group}\label{ss:finrem-CG}
In \cite[p.~19]{GK20}, Glass and Kaplan ask about the smallest number of vertices
\ts $n=n(\rH)$ \ts of a graph with a given \defng{critical group} \ts $\rH$
(also known as the \defng{sandpile group}), that can be defined by the
\emph{Smith normal form} \ts of the graph Laplacian.
Recall that for every graph $G$, the size of its critical group is the number
of spanning trees in~$G$ : \ts $|\rH_G|=\st(G)$.  Thus this problem refines
the problem in the Example~\ref{ex:var-st}.
In particular, Stong's theorem \cite{Stong} implies that \. $n(\rH)= o((\log k)^{3/2})$ \ts
for all \emph{square-free} \ts $k=|\rH|$, and suggests that the same bound holds for all~$\rH$.
We refer to \cite[Ch.~6]{CP18} and \cite[Ch.~4]{Kli19} for the background and further
references on the critical group.

\subsection{Determinants}\label{ss:finrem-det}
In their study of hypergraphs with positive discrepancy, Cherkashin and Petrov \cite{CP19}
considered the following problem.  Fix a parameter \ts $\de\in \nn$.  Denote by
\ts $\cT_\de(n)$ \ts the set of all possible determinants of \ts $n\times n$ \ts matrices
with entries in \ts $\{0,1,\ldots,\de\}$.  They show that
$$
\cT_4(n) \, \supset \, \big\{\ts 0,1,\ldots,c^n\ts \big\} \quad \text{for some} \ \ c > 1.
$$
This result shows that the determinant is a concise \ts $\GapP$ \ts function.
A stronger result is proved by Shah~\cite{Shah} in connection with random
\emph{binary $(0/1)$ matrices}:
$$
\cT_1(n) \, \supset \, \big\{\ts 0,1,\ldots, \ts \tfrac{\al \ts 2^n}{n}\.\big\}  \quad \text{for some} \ \ \al > 0,
$$
(cf.\ \cite[\href{http://oeis.org/A013588}{A013588}]{OEIS}).

In~\cite[$\S$4]{CP19}, the authors ask whether these results can be strengthened to
\begin{equation}\label{eq:CP-det}
\cT_\de(n) \, \supset \, \big\{\ts 0,1,\ldots, \ts c^{n\log n}\ts\big\}  \quad \text{for some} \ \ \de\ge 1 \ \text{and} \ c > 1.
\end{equation}
Denote \ts $\be_n := \max\cT_1(n)$.  Finding \ts $\be_n$ \ts is the famous
\defng{Hadamard maximal determinant problem}, see e.g.\  \cite{BC72,BEHO},
\cite[\href{http://oeis.org/A003432}{A003432}]{OEIS} and references therein.
It was shown by Hadamard (1893) that \ts $\be_n \le n^{n/2}$.  In a different direction,
it is known that \ts $\be_n >  C \ts e^{n\log n - cn}$ \ts for an infinite set of integer~$n$
and fixed $c,C>0$, see \cite[Cor.~27]{BEHO}.  This lends a (weak) partial evidence
towards~\eqref{eq:CP-det}.

Finally, recall that by the matrix-tree theorem, the number of spanning trees is a
determinant  of the Laplacian.  Thus, since the graphs in \cite{Stong} have degrees
at most six (cf.\ Example~\ref{ex:var-st}), Stong's result gives a
\eqref{eq:CP-det}--style inclusion for matrices with entries in \ts $\{-1,0,1,\ldots,6\}$, but
with a weaker upper limit \ts $c^{n^{2/3}}$.
By contrast, Azarija and \v{S}krekovski conjecture in \cite{AS13} that one can
take the upper bound to be \ts $e^{\om(n)}$ \ts for general simple graphs.
Of course, this bound cannot be achieved on graphs with bounded (average) degree,
since they have at most exponential number of spanning trees, see e.g.\ \cite{Gri76}.

\subsection{Reduced factorizations}\label{ss:finrem-reduced}
For a permutation \ts $\si \in S_n$, a \defn{reduced factorization} \ts is
a product \. $\si = (i_1,i_1+1)\cdots (i_\ell,i_\ell+1)$ \. of \ts
$\ell=\inv(\si)$ \ts adjacent transpositions. Denote by \ts $\red(\si)$ \ts
the number of reduced factorizations of~$\si$.  It was conjectured
in~\cite{DP18} that computing \ts $\red$ \ts is  $\SP$-complete.
Observe that function \ts $\red$ \ts is almost complete, since
$$
\red(2,1,n,3,4,\ldots,n-1) \, = \, n-2 \quad \, \text{for} \ \ \. n\ts \ge \ts 3.
$$

\begin{conj}\label{conj:finrem-reduced}
Function \ts $\red$ \ts is concise.
\end{conj}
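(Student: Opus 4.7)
The plan is to emulate the strategy of Theorem~\ref{t:domino} and Proposition~\ref{p:match-forest}: build a small library of ``gadget'' permutations realizing prescribed small values of $\red$, together with a handful of splicing operations which combine two permutations $\sigma,\tau$ with $\red(\sigma)=a$, $\red(\tau)=b$ into a new permutation whose $\red$-value is a prescribed arithmetic combination such as $2a$, $2a+1$, or $a+b$, at the cost of $O(1)$ additional letters.  Iterating such operations $O(\log k)$ times would produce $\sigma\in S_n$ with $\red(\sigma)=k$ and $n=O(\log k)$.

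The natural multiplicative tool is the direct sum: for $\sigma\in S_n$ and $\tau\in S_m$ acting on disjoint index sets, the shuffle structure of reduced words gives
$$
\red(\sigma \oplus \tau) \, = \, \binom{\inv(\sigma)+\inv(\tau)}{\inv(\sigma)} \, \red(\sigma)\, \red(\tau).
$$
Complementarily, the descent recursion \ts $\red(\sigma)=\sum_i \red(\sigma s_i)$, summed over right descents \ts $i$ \ts of~$\sigma$, is an additive tool.  Following the $\cD(a,b)$-framework in the proof of Theorem~\ref{t:domino}, one would set
$$
\cR(a,b) \, := \, \big\{\sigma \. : \. \red(\sigma)=a,\ \red(\sigma s_{i^\ast})=b \ \text{for a distinguished right descent}\ i^\ast\big\},
$$
and attempt to prove implications \ts $\cR(a,1)\ne\emp \ \Rightarrow \ \cR(2a,1), \cR(2a+1,1)\ne\emp$, in direct analogy with~\eqref{eq:imply}.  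Small starting gadgets such as $s_i$, $s_i s_j$ with $|i-j|\ge 2$, and $321=s_1 s_2 s_1$ supply the base cases.

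A softer complementary route goes through the Billey--Jockusch--Stanley theorem: for a 321-avoiding permutation $\sigma$, one has \ts $\red(\sigma)=\SYT(\la/\mu)$ \ts for an explicit skew shape $\la/\mu$ attached to~$\sigma$.  A positive answer to Question~\ref{q:SYT-skew} on the conciseness of $\SYT$ on skew shapes would thus immediately imply Conjecture~\ref{conj:finrem-reduced} via this sub-family.

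The main obstacle is the absence of a clean constant-cost move implementing $a\mapsto 2a$ or $a\mapsto 2a+1$: the direct-sum formula inserts an unwanted binomial prefactor whose size grows with $\inv(\sigma)$, the descent recursion splits $a$ into pieces $\red(\sigma s_i)$ whose sizes we cannot freely prescribe, and the 321-avoiding route is blocked by the open Question~\ref{q:SYT-skew}.  The most plausible workaround, modeled on the strategy in the proof of Proposition~\ref{p:match-forest}, would be to first realize all sufficiently small coprime pairs $(a,b)$ simultaneously inside $\cR(a,b)$ using only $O(\log\max(a,b)\,\log\log\max(a,b))$ letters, and then to assemble a target integer $k$ from its prime factorization by direct sums, carefully budgeting the binomial prefactors so that the assembled permutation still has size polylogarithmic in~$k$.
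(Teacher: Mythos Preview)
This statement is listed in the paper as an open \emph{conjecture}, not a theorem; the paper offers no proof.  The only remark the paper makes is exactly your ``softer route'': via the Billey--Jockusch--Stanley correspondence, a positive answer to Question~\ref{q:SYT-skew} (conciseness of $\SYT$ on skew shapes) would imply Conjecture~\ref{conj:finrem-reduced}.  So there is nothing to compare your proposal against, and your proposal is not a proof either --- as you yourself acknowledge in the final paragraph.

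The obstacles you name are genuine and, as far as I can see, unresolved.  The direct-sum formula $\red(\sigma\oplus\tau)=\binom{\inv(\sigma)+\inv(\tau)}{\inv(\sigma)}\red(\sigma)\red(\tau)$ introduces a binomial prefactor that grows like $2^{\Theta(\inv(\sigma))}$, so iterating it does not give the controlled doubling you need; worse, any assembly from a prime factorization $k=p_1\cdots p_\ell$ via direct sums would have to absorb a product of such binomials, and there is no visible mechanism for cancelling them.  The descent recursion $\red(\sigma)=\sum_i \red(\sigma s_i)$ is additive in the wrong direction: it decomposes a given value rather than building one up, and the summands $\red(\sigma s_i)$ are not freely prescribable.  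Your $\cR(a,b)$ framework is a reasonable analogue of $\cD(a,b)$, but you have not exhibited any transformation realizing $\cR(a,1)\Rightarrow\cR(2a,1)$ or $\cR(2a+1,1)$ with $O(1)$ new letters, and I do not see one.  In short, the proposal is a fair survey of natural attack lines, each correctly flagged as blocked; it does not close the gap, and the conjecture remains open.
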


% \noindent
Let \ts $\si\in S_n$ \ts be a \defn{$321$-avoiding} \ts permutation, i.e.\ suppose
that \ts $\pc_{321}(\si)=0$.  We have in this case: \ts $\red(\si)=\SYT(\la/\mu)$,
where \ts $\la/\mu$ \ts is a skew shape associated to~$\si$, see \cite[pp.~358-359]{BJS}.
Vice versa, for every skew shape \ts $\la/\mu$ \ts of size~$n$, there is a
$321$-avoiding permutation \ts $\si\in S_n$ \ts such that \ts $\red(\si)=\SYT(\la/\mu)$.
In other words, a positive answer to Question~\ref{q:SYT-skew} implies
Conjecture~\ref{conj:finrem-reduced}.
For further discussions and applications of this result, see e.g.\ \cite[$\S$2.6.6]{Man01}
and \cite[$\S$2.2, $\S$5.3]{MPP19}.

%\smallskip

\subsection{Littlewood--Richardson coefficients}\label{ss:finrem-LR}
Recall that the Littlewood--Richardson coefficients is a complete function
(see Example~\ref{ex:LR-complete}), and that the maximal LR-coefficient
is exponential in~$n$, see \cite{PPY19}.  This suggest the following:

\begin{conj}\label{conj:LR-concise}
The Littlewood--Richardson coefficients \ts $c^\la_{\mu\nu}$ \ts is a concise function,
i.e.\ for every \ts $k>0$ \ts there exists three partitions \ts $\la, \mu, \nu$, such that
\ts $c^\la_{\mu\nu} = k$, \ts $|\la|=|\mu|+|\nu| =n$ \ts and \. $n\le C(\log k)^c$, for some fixed \ts $C, c>0$.
\end{conj}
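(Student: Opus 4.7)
The plan is to mimic the doubling strategy used in the proof of Theorem~\ref{t:domino}, paired with arithmetic gadgets producing any prescribed integer value of $c^{\la}_{\mu\nu}$ using partitions of total size polylogarithmic in that value.

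First, I would assemble a small library of base triples with explicit small LR coefficient, leveraging: the trivial case $c^{\mu}_{\mu,\emptyset}=1$; the family $c^{k(321)}_{k(21),\ts k(21)}=k+1$ from Example~\ref{ex:LR-complete}; and stretched families $k\mapsto c^{k\la}_{k\mu,\ts k\nu}$, which yield polynomial growth of various degrees via work of Rassart and Knutson--Tao. Once a doubling operation is available, these anchors cover every small residue class and supply the base cases needed for a binary-expansion argument.

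Next, I would seek LR analogues of the transformations $\cD(a,1)\to\cD(2a,1)$ and $\cD(a,1)\to\cD(2a+1,1)$ from the proof of Theorem~\ref{t:domino}. The natural candidates are: (i)~padding a skew shape $\la/\mu$ with an extra row or column whose multiplicative effect on the LR count is predictable; (ii)~gluing two skew shapes into a configuration where the LR count factors as a product; and (iii)~single-cell Pieri moves relating $c^{\la^+}_{\mu,\nu}$ to $c^{\la}_{\mu,\nu^-}$. Combined with an explicit exponentially large family in the spirit of \cite{PPY19}, these should in principle suffice to realise every integer $k\le e^n$ as an LR coefficient with partitions of size $O(n)$, yielding the desired $n=O(\log k)$ bound by binary expansion.

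The main obstacle, and the reason the statement is only a conjecture, is the rigidity of LR coefficients under local shape operations. A naive disjoint union of two skew shapes does \emph{not} multiply their LR coefficients: if $\la/\mu$ splits into two components, then $c^\la_{\mu\nu}$ is a \emph{sum} over ways of distributing the content~$\nu$, not a product. This is precisely where the analogy with Theorem~\ref{t:domino} breaks down, since dominoes tile independently across disconnected components while LR fillings do not. I therefore expect a proof to require a genuinely new multiplicative gadget, most plausibly realised in the hive or Berenstein--Zelevinsky polytope model, where local perturbations of the boundary data can be engineered to rescale the lattice-point count by a prescribed factor. Producing such a gadget even for the single scaling factor~$2$ --- doubling the LR coefficient while adding only $O(1)$ to the total size --- would already be a major step, and is in my view the crux of the problem.
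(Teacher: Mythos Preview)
This statement is a \emph{conjecture} in the paper, not a theorem: the paper offers no proof. The only remark the paper makes toward it is that Conjecture~\ref{conj:LR-concise} would follow from Conjecture~\ref{conj:CT} (that the number of contingency tables is concise) via the standard reductions in~\cite{PV}. So there is nothing to compare your proposal against in the sense of ``the paper's own proof''.

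That said, your proposal is a genuinely different line of attack from the one the paper hints at. The paper's route is indirect: reduce LR coefficients to Kostka numbers to contingency tables, and then hope that \ts $\CT$ \ts is concise. Your route is direct: build doubling and increment gadgets on LR triples themselves, in analogy with the proof of Theorem~\ref{t:domino}. You correctly identify the crux --- that disconnected skew shapes give sums rather than products of LR coefficients, so the naive multiplicative gadget fails --- and you are right that this is exactly where the domino analogy breaks. Your suggestion to look for a multiplicative gadget in the hive or BZ polytope model is reasonable but speculative; no such gadget is known, and producing one would indeed resolve the conjecture. In short, your proposal is an honest research plan with the obstruction clearly flagged, not a proof, and the paper does not claim otherwise.
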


\noindent
Using standard reductions, this conjecture follows from Conjecture~\ref{conj:CT}
that the numbers of contingency tables is a concise function, see e.g.\ \cite{PV}.
Let us mention that similarly to the domino tilings, the LR-coefficients have a
combinatorial interpretation as the number of \emph{Knutson--Tao puzzles} \ts
with boundary markings as shown in Figure~\ref{f:KT}, see e.g.~\cite{Knu22}.

\begin{figure}[hbt]
\begin{center}
	\includegraphics[height=2.2cm]{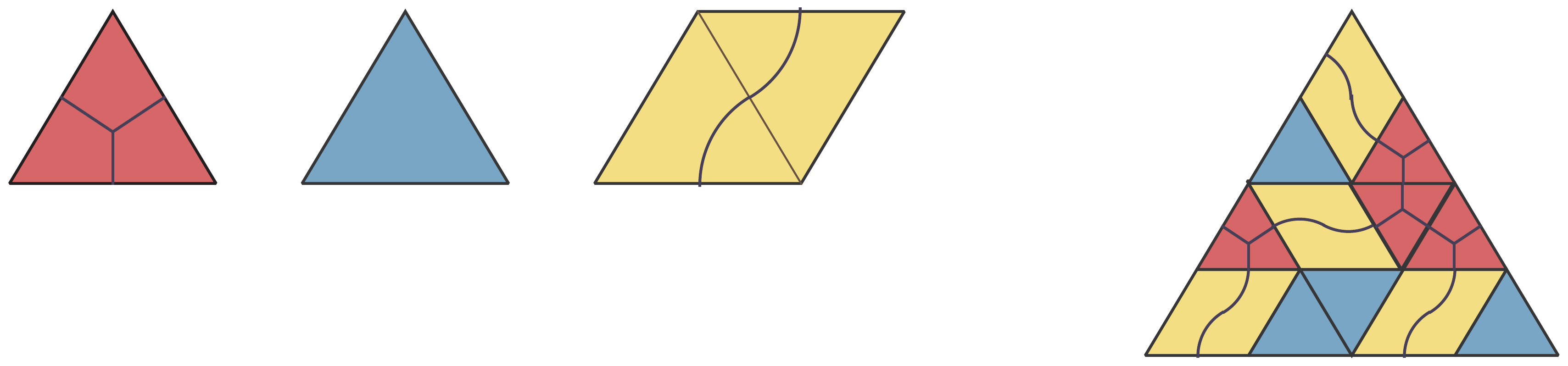}
\caption{Tiles of the Knutson--Tao puzzles (up to rotation, reflections are not allowed),
and an example of a puzzle. }
\label{f:KT}
\end{center}
\end{figure}

%\smallskip

\subsection{Integer points}\label{ss:finrem-Knapsack}
Let \ts $P \in \rr^d$ \ts be a \defn{rational convex H-polytope}, i.e.\ defined
by inequalities over~$\qqq$, and let \ts $\zeta(P)$ \ts be the number of integer
points in~$P$.  One can ask if \ts $\zeta$ \ts is concise in full generality?
Here we are assuming that the input is in unary.  Note that \ts $\zeta$ \ts
is a counting function which generalizes the number of contingency tables
(Example~\ref{ex:CT-complete}) and LR-coefficients
(Example~\ref{ex:LR-complete}), but we don't know if either of these is concise
(see Conjectures~\ref{conj:CT} and~\ref{conj:LR-concise}).

There is a sneaky way to establish that  \ts $\zeta$ \ts is concise, by
noting that in a special case this becomes one of the known \ts
$\SP$-complete problems in unary, with a parsimonious reduction
from the \ts {\sc 0/1 PERMANENT} (see e.g.\ \cite[Thm~1.2]{DO04}) \ts
or the \ts {\sc \#3SAT} (see e.g.\ \cite[$\S$5.1]{IMW17} and references therein).
Recall that both of these problems have concise counting functions.
In both cases, Proposition~\ref{ss:SAT-reduction} implies that \ts $\zeta$ \ts
is concise.  We omit the details.

\subsection{Domino tilings of simply connected regions}
Recall that our proof of Theorem~\ref{t:domino} uses regions that are not
simply connected.  Note also that Nadeau's construction in Remark~\ref{r:Nadeau}
is simply connected.

\begin{conj} \label{conj:domino-simply-connected}
Theorem~\ref{t:domino} holds for simply connected regions.
\end{conj}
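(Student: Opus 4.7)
The goal is to produce, for every $k\ge 0$, a \emph{simply connected} region $\Gamma\ssu\zz^2$ with $\tau(\Gamma)=k$ and $|\Gamma|=O(\log k)$. I would follow the recursive doubling-and-increment scheme of the proof of Theorem~\ref{t:domino}, but replace each gadget by a simply connected variant. The overall inductive statement would be: there exists a class of simply connected $C$-triples $(\Ga,x,y)$ with $\Ga\in\cD(a,1)$ and $|\Ga|=O(\log a)$, closed under the operations $a\mapsto 2a$ and $a\mapsto 2a+1$; iterating from $(\Ga,x,y)\in\cD(1,1)$ then gives the theorem. The notion of $C$-triple needs only a mild strengthening: require $\Ga$ to be simply connected, and require the horizontal strip immediately above $\Ga$ (between and just outside of $x,y$) to be free, so that gadgets attached above cannot create enclosed holes with $\Ga$.

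\textbf{The two gadgets.} The doubling operation $a\mapsto 2a$ can be implemented by attaching a single $2\times 2$ flap above $x$ via a forced-domino neck of length $2$. Since the flap admits exactly two local tilings while the neck fixes its own dominoes, the total count is multiplied by $2$, and the attachment is simply connected because it is a convex bump glued along one edge. The increment operation $a\mapsto 2a+1$ is the delicate step: one needs a gadget with a binary switch whose two states contribute, respectively, $2a$ and $1$ to $\tau$. The plan is to place a switching cell above $y$ whose two tilings propagate as a forced-domino chain running along the \emph{outside} of the existing doubled region (rather than through an enclosed hole as in Figure~\ref{f:imply}). In one state, this chain terminates harmlessly and leaves the doubled region tiled freely ($2a$ ways); in the other state, the chain pushes into the region and forces a unique tiling of $\Ga$ (contributing $1$). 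Because the chain runs around the exterior, the union remains simply connected, and the new $x',y'$ can be placed on the fresh top boundary to preserve the $C$-triple property for iteration.

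\textbf{Main obstacle.} The hard part is designing the $+1$ gadget so that the forced-domino chain actually propagates all the way from the switch cell above $y$ into the body of $\Ga$ without (i) enclosing a hole, (ii) interfering with the doubling flap above $x$ in unintended ways, and (iii) destroying the $C$-triple interface. In the original proof the hole is precisely the mechanism that lets the switch ``see'' the other side of the region without a long corridor; without holes we must route the propagation around the outside, and one must verify that the routing length remains $O(1)$ per iteration so that the total size stays $O(\log k)$. Once a single successful simply connected $+1$-gadget is exhibited (likely by case analysis on a small constant-sized template together with the forced-domino parity argument of Theorem~\ref{t:domino}), the induction and the polynomial-time construction go through verbatim, and Corollaries~\ref{c:gen} and~\ref{c:domino-single} extend by the same modifications applied to the auxiliary transformations of Figure~\ref{f:ext}.
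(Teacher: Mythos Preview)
This statement is Conjecture~\ref{conj:domino-simply-connected}, and the paper does \emph{not} prove it---it is explicitly left open. The paper remarks that ``if true, the proof would have to be much more elaborate than our proof for general regions,'' and points to structural obstructions specific to the simply connected case: such tilings carry height functions, and via the generalized Temperley bijection they correspond to spanning-tree counts of grid graphs. The paper's only unconditional progress is Theorem~\ref{t:domino-sc}, which uses snake regions and the continued-fraction connection to get the weaker range $\{0,1,\ldots,c^{n/\log n}\}$; the full conjecture is shown to follow from Zaremba's conjecture, itself a well-known open problem in number theory.

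Your proposal has a genuine gap precisely where you flag it: you do not construct the $+1$ gadget, and this is the whole difficulty. In the original proof the hole is not incidental decoration---it is the mechanism by which the switch cell above $y$ communicates with the region near $x$ to force a unique global tiling in one of the two states. Your suggestion to route a forced-domino chain ``around the outside'' is not a construction; a chain running along the exterior boundary of a simply connected region has no obvious way to reach into the interior and force $\tau(\Ga)=1$ without either enclosing a hole or growing by more than $O(1)$ cells per iteration. Your doubling gadget is also incomplete: an independent $2\times 2$ flap does multiply $\tau$ by~$2$, but you must simultaneously exhibit new boundary squares $x',y'$ with $\tau(\Ga'-x'-y')=1$ satisfying the $C$-triple geometry, and you have not done so. That the paper reduces the conjecture to Zaremba rather than to a finite gadget search is a strong signal that a purely local surgery of the kind you outline is not currently known to work.
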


%\noindent
If true, the proof would have to be much more elaborate than our proof for
general regions.  Indeed, note that domino tilings of simply connected regions
have additional properties given by the \defng{height functions} \ts
(see e.g.\ \cite{PST,Rem,Thu}).

Additionally, the \emph{generalized Temperley's bijection} \cite{KPW}
relates the number of domino tilings in simply connected regions with
the number of spanning trees in certain grid graphs.  Thus,
Conjecture~\ref{conj:domino-simply-connected} can be reformulated
to say that a certain restriction of \ts $\st$ \ts is
concise, see Example~\ref{ex:var-st}.  This gives another reason
to think that the conjecture might be difficult.
%
%
% Similarly, the Kasteleyn formula for the number of domino tilings also simplifies in this case.
%\smallskip

On the other hand, it follows from the approach \cite{CS18,Sch19} and 
\cite{CP-CF} that Conjecture~\ref{conj:domino-simply-connected} follows from
\emph{Zaremba's conjecture}.  Moreover, for the \emph{snake regions} 
defined in \cite{CS18} this approach gives the following analogue of \eqref{eq:KS};  
we omit the details.

\smallskip

\begin{thm}  \label{t:domino-sc}
There is \ts $c>1$, such that \. 
$\cT_{\text{\rm snake}}(n) \supseteq \big\{0,1,\ldots,c^{n/(\log n)}\big\}$,
for all \ts $n\ge 1$.  
\end{thm}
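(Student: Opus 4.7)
The plan is to reduce to the Kravitz--Sah construction \cite{KS21} via the continued-fraction interpretation of domino tilings of snake regions developed in \cite{CS18,Sch19,CP-CF}. Note that the analogous upper bound $c^{n/\log n}$ is exactly the one that appears for width two posets in \eqref{eq:KS}, so the target is to transport that argument across the bijection between snake regions and continuants.

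First, I would recall the basic combinatorial input: a snake region in the sense of \cite{CS18} is a simply connected region $\Ga \ssu \zz^2$ obtained by gluing unit cells along a self-avoiding monotone staircase path. By a classical identity (going back to Propp, and developed in the cluster algebra setting in \cite{CS18}), the number of domino tilings of such a region equals a continuant
\[
\tau(\Ga) \, = \, K_\ell(b_1, \ldots, b_\ell),
\]
where $(b_1,\ldots,b_\ell)$ records the lengths of the maximal straight segments of the path and $K_\ell$ is the Euler continuant polynomial; moreover the area satisfies $|\Ga| = \Theta(b_1 + \cdots + b_\ell)$, with an absolute implied constant. Conversely, every tuple $(b_1,\ldots,b_\ell)$ of positive integers arises from some snake region, so $\cT_{\text{\rm snake}}$ contains every value of $K_\ell$.

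Second, I would invoke the continued fraction construction of Kravitz--Sah from \cite{KS21}. Their proof of \eqref{eq:KS} shows that every positive integer $k$ can be realized as a continuant $K_\ell(b_1,\ldots,b_\ell)$ with $b_1 + \cdots + b_\ell = O(\log k \cdot \log \log k)$; this is the combinatorial heart of their Stern--Brocot construction and, crucially, imposes no upper bound on the individual partial quotients $b_i$, so no Zaremba-type input is needed. Feeding such a tuple into the snake-region recipe produces $\Ga \ssu \zz^2$ with $\tau(\Ga) = k$ and $|\Ga| \le C \log k \log \log k$. The value $k = 0$ is handled separately by taking a snake with $|\Ga_e| \ne |\Ga_o|$, in the notation of the proof of Theorem~\ref{t:domino}. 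Setting $n = |\Ga|$ and inverting, $n = O(\log k \log \log k)$ gives $\log n = \Theta(\log \log k)$ and hence $n/\log n = \Theta(\log k)$, which is equivalent to $k \le c^{n/\log n}$ for an appropriate $c > 1$.

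The main obstacle is pinning down the precise dictionary between snake regions and continuants in a form that makes Kravitz--Sah tuples $(b_1,\ldots,b_\ell)$ realizable as honest snake regions of controlled area; this dictionary is essentially what \cite{CS18,Sch19,CP-CF} provide, but care is needed to check that the area estimate $|\Ga| = \Theta(b_1+\cdots+b_\ell)$ really holds with constants independent of the tuple (rather than, e.g., degrading when the $b_i$ are very unbalanced). Once this bookkeeping is done, no further number-theoretic input enters and the theorem follows. I would also expect that filling in the intermediate integers in $\{0,1,\ldots,c^{n/\log n}\}$ for every $n$ (not just the $n$ produced by the construction) requires only appending trivial unit extensions to the snake, which slightly enlarges $|\Ga|$ but preserves the tiling count.
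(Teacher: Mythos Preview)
Your proposal is correct and follows essentially the same route the paper sketches: the paper states only that the result follows from the continuant interpretation of snake-graph matchings in \cite{CS18,Sch19} together with the Kravitz--Sah bound, via the dictionary in \cite{CP-CF}, and explicitly omits the details. Your write-up is a faithful unpacking of that sketch, including the area estimate $|\Ga|=\Theta(\sum b_i)$ and the padding step to realize all intermediate~$n$.
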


\subsection{Tilings with small tiles}\label{ss:finrem-tile}
The literature on tilings in the plane is much too large to be reviewed here,
but let us mention a few relevant highlight.
In \cite{MR01}, the authors proved $\NP$-completeness
for the decision problem and $\SP$-completeness for the counting problem of
tilings of general regions for a small set with just two tiles
(up to rotation).  In \cite{BNRR}, \ts $\NP$-completeness was proved for
tilings with \emph{bars} \ts $k \times 1$ \ts and \ts $1\times k$,
where \ts $k\ge 3$.  This is especially remarkable
since for the same set of tiles, tileability of simply connected regions is
in~$\P$, see~\cite{KK93}.  Finally, in~\cite{PY-sc}, a finite set of rectangle
tiles was given, with an $\NP$-complete decision problem and $\SP$-complete
corresponding counting problem for simply connected regions.

\subsection{The induction two-step}\label{ss:finrem-groups}
The inductive arguments in the paper can be used to extend some results of concise
functions.  This phenomenon can be seen, for example, in
Corollary~\ref{c:gen} which arises naturally from the proof of Theorem~\ref{t:domino}.
Similarly, for relatively prime \ts $(a,b)$, the analogue of the corollary holds
for linear extensions of width two posets (Example~\ref{ex:LE-restricted}),
and the number of matchings of trees (Example~\ref{ex:var-match}).  This is a
corollary of properties of the Stern--Brocot tree (see references in~\cite{KS21}).
Another notable example of this phenomenon is a beautiful result in~\cite{HKNS},
that for every two groups \ts $\Ga_0$ \ts and \ts $\Ga_1$, there is a graph \ts
$G=(V,E)$ \ts and an edge \ts $e\in E$, such that \ts Aut$(G)=\Ga_0$ \ts
and \ts  Aut$(G-e)=\Ga_1$\ts.
%\smallskip

\subsection{Concise functions}\label{ss:finrem-open-problems}
As we mentioned earlier, the notion of a concise function is closely related to \ts
$\SP$-completeness. Indeed, for every parsimonious reduction \ts $f\mapsto g$,
if \ts $f$ \ts is concise, then so is~$g$.  Below we give an example
of a $\SP$-complete function that is \emph{not} \ts concise.
Note that if either Conjecture~\ref{conj:LE-height-two} or
Conjecture~\ref{conj:bicirc} is false, this would also give
such examples.

On the other hand, note that there are several interesting examples of concise
functions that are in~$\FP$, such as the number of spanning trees in simple graphs
and the number of linear extensions of width two posets (Examples~\ref{ex:var-st}
and~\ref{ex:LE-restricted}).  This suggests that being concise is an interesting
property in its own right, independently of complexity considerations.

\subsection{Coincidence problems}\label{ss:finrem-coincidence}
Note that none of the natural $\SP$-complete counting functions~$f$ that
we consider have the corresponding coincidence problems \ts {\sc C}$_f$ \ts
in~$\ts\PH$ (unless $\PH$ collapses).  Theorems~\ref{t:main-first} and~\ref{t:main-combined} prove
this in many cases, and some interesting examples remain open
(see above).
The same applies for the verification problem \ts {\sc V}$_f\ts$.
Note that if \ts $f$ \ts is recognizable, then these two problems are $\PH$-equivalent,
i.e.\ \. $\text{\sc C}_f \in \PH \ \Longleftrightarrow \ \text{\sc V}_f \in \PH$ \.
by the argument in the proof of Proposition~\ref{p:rec}.

The following elegant example by Greta Panova shows that  $\SP$-complete functions
can have easy coincidence problems.\footnote{Personal communication (August, 2023)}
In notation of~$\S$\ref{ss:perm-gen}, let \ts $A=(a_{ij})\in \cM_n$ \ts where
$\cM_n$ \ts is the set of $n \times n$ \ts matrices with entries in \ts $\{0,1\}$.
Denote \ts $g(A) := a_{11} + 2 a_{12} + 4 a_{13} + \ldots + 2^{n^2-1} a_{nn} + 2^{n^2}$ \ts
and let \ts $f: \sqcup \ts\cM_n \to \nn$ \ts be defined by \ts $f(A):= \per(A) + 2^{n^2} g(A)$.
Observe that \ts $\per(A)\le n! < 2^{n^2}$.
Since \ts $f(A) = \per(A) \mod 2^{n^2}$, we conclude that computing $\ts f \ts $ is
\ts $\SP$-complete.  On the other hand, we have \ts $f(A) = f(A')$ \ts
if and only if \ts $A=A'$, so \ts {\sc C}$_f \in \P$.

Curiously, function \ts $f$ \ts is \emph{not} \ts almost complete since
the first \ts $(n^2+1)$ \ts bits determine the last  \ts $n^2$ \ts bits, but is
concise since \ts $n^2 = O\big(\log f(A)\big)$ \ts for all \ts $A\in \cM_n$\ts.
Function \ts $f$ \ts is not recognizable, however, since
the corresponding restricted verification problem is exactly \ts {\sc V}$_{\text{PER}}$ \ts
which is not in~$\PH$ (unless $\PH$ collapses).   Now take \ts $\cM_n':=\cM_n \times [n]$.
Define \ts $f'(A,\ell):= f(A)$ \ts if \ts $\ell=0$, and \ts $f'(A,\ell):= \ell-1$ \ts if \ts
$\ell > 0$.  Then function \ts $f'$ \ts is complete, and thus \emph{not} \ts concise.
Of course, computing \ts $f'$ \ts remains \ts $\SP$-complete.

This leaves us with more questions than answers.  Is \ts {\sc C}$_{\text{PER}} \in \coNP$-hard?
Is it \ts $\CEP$-complete under Turing reductions?  What about
other natural coincidence problems that we discuss in this paper?
Does it make sense to consider a complexity class of \ts $\SP$-complete
recognizable functions as in Proposition~\ref{p:rec}?

\subsection{Equality conditions}\label{ss:finrem-equality-conditions}
The equality conditions for many combinatorial inequalities, have been studied
widely in the area, especially in order theory.  We refer to \cite{Gra,Win86}
for dated surveys, to \cite{Pak-OPAC} for a recent overview.
% , and to \cite{CP,CPP-effective,CPP-KS,MS22,SvH-acta} for the most recent progress.
Computationally, the equality conditions are decision
problems in \ts $\CEP$.  This paper grew out of our efforts to understand the
% Stanley log-concavity inequality for linear extensions, and the
Alexandrov--Fenchel inequality for mixed volumes \cite{CP+}.

%%%%%%%%%%%%%%%%%%%%%%%%%%%%%%%%%%%%%%%%%%%%%%%%%%%%%%%%%%%%%%%%%%%%%%%%
%%%%%%%%%%%%%%%%%%%%%%%%%%%%%%%%%%%%%%%%%%%%%%%%%%%%%%%%%%%%%%%%%%%%%%%%
%%%%%%%%%%%%%%%%%%%%%%%%%%%%%%%%%%%%%%%%%%%%%%%%%%%%%%%%%%%%%%%%%%%%%%%%
%%%%%%%%%%%%%%%%%%%%%%%%%%%%%%%%%%%%%%%%%%%%%%%%%%%%%%%%%%%%%%%%%%%%%%%%
%%%%%%%%%%%%%%%%%%%%%%%%%%%%%%%%%%%%%%%%%%%%%%%%%%%%%%%%%%%%%%%%%%%%%%%%
%%%%%%%%%%%%%%%%%%%%%%%%%%%%%%%%%%%%%%%%%%%%%%%%%%%%%%%%%%%%%%%%%%%%%%%%
%%%%%%%%%%%%%%%%%%%%%%%%%%%%%%%%%%%%%%%%%%%%%%%%%%%%%%%%%%%%%%%%%%%%%%%%

\vskip.8cm

% \newpage

\subsection*{Acknowledgements}
We are grateful to Karim Adiprasito, Sasha Barvinok, Richard Brualdi,
Jes\'{u}s De~Loera, Darij Grinberg, Christian Ikenmeyer, Mark Jerrum,
Nathan Kaplan, Greg Kuperberg, Alejandro Morales, Steven Noble, Greta Panova,
Fedya Petrov, Colleen Robichaux, Rikhav Shah, David Soukup,
Richard Stanley and Bridget Tenner, for useful discussions
and helpful remarks.  Special thanks to Noah Kravitz,
Ashwin Sah and Richard Stong for their help explaining \cite{KS21} and~\cite{Stong}.

This paper was finished when both authors were visiting the American
Institute of Mathematics at their new location in Pasadena, CA.
We are grateful to AIM for the hospitality.  The first
author was partially supported by the Simons Foundation.
Both authors were partially supported by the~NSF.

%\vskip.9cm

%%%%%%%%%%%%%%%%%%%%%%%%%%%%%%%%%%%%%%%%%%%%%%%%%%%%%%%%%%%%%%%%%%%%%%%%
%%%%%%%%%%%%%%%%%%%%%%%%%%%%%%%%%%%%%%%%%%%%%%%%%%%%%%%%%%%%%%%%%%%%%%%%
%%%%%%%%%%%%%%%%%%%%%%%%%%%%%%%%%%%%%%%%%%%%%%%%%%%%%%%%%%%%%%%%%%%%%%%%
%%%%%%%%%%%%%%%%%%%%%%%%%%%%%%%%%%%%%%%%%%%%%%%%%%%%%%%%%%%%%%%%%%%%%%%%
%%%%%%%%%%%%%%%%%%%%%%%%%%%%%%%%%%%%%%%%%%%%%%%%%%%%%%%%%%%%%%%%%%%%%%%%
%%%%%%%%%%%%%%%%%%%%%%%%%%%%%%%%%%%%%%%%%%%%%%%%%%%%%%%%%%%%%%%%%%%%%%%%
%%%%%%%%%%%%%%%%%%%%%%%%%%%%%%%%%%%%%%%%%%%%%%%%%%%%%%%%%%%%%%%%%%%%%%%%

% \newpage
\vskip1.5cm

{\footnotesize

}	

\end{document}